\newtheorem{thm}{Theorem}
\newtheorem{prop}{Proposition}
\newtheorem{lem}{Lemma}
\theoremstyle{definition}
\newtheorem{rem}{Remark}
\newcommand{\bbP}{\mathbb{P}}
\newcommand{\bbN}{\mathbb{N}}
\newcommand{\bbE}{\mathbb{E}}
\newcommand{\bbR}{\mathbb{R}}
\newcommand{\bfX}{\mathbf{X}}
\newcommand{\rmd}{\mathrm{d}}
\newcommand{\calX}{\mathcal{X}}
\newcommand{\bfk}{\mathbf{k}}
\newcommand{\bfl}{\bm{\ell}}
\newcommand{\bfe}{\mathbf{e}}
\newcommand{\calP}{\mathcal{P}}
\title{Genealogical processes of non-neutral population models under rapid mutation}
\author{Jere Koskela \\
	\texttt{jere.koskela@newcastle.ac.uk} \\
	\small School of Mathematics, Statistics and Physics,\\
	\small Newcastle University \\
	\small \& Department of Statistics, \\
	\small University of Warwick \\
	\and
	Paul A.~Jenkins \\
	\texttt{p.jenkins@warwick.ac.uk} \\
	\small Department of Statistics \& \\ 
	\small Department of Computer Science, \\
	\small University of Warwick
	\and
	Adam M.~Johansen \\
	\texttt{a.m.johansen@warwick.ac.uk} \\
	\small Department of Statistics, \\
	\small University of Warwick \\
	\and
	Dario Span\`{o} \\
	\texttt{d.spano@warwick.ac.uk} \\
	\small Department of Statistics, \\
	\small University of Warwick
}
\date{\today}
\begin{document}

\maketitle

\begin{abstract}
We show that genealogical trees arising from a broad class of non-neutral models of population evolution converge to the Kingman coalescent under a suitable rescaling of time.
As well as non-neutral biological evolution, our results apply to genetic algorithms encompassing the prominent class of sequential Monte Carlo (SMC) methods.
The time rescaling we need differs slightly from that used in classical results for convergence to the Kingman coalescent, which has implications for the performance of different resampling schemes in SMC algorithms.
In addition, our work substantially simplifies earlier proofs of convergence to the Kingman coalescent, and corrects an error common to several earlier results.
\end{abstract}

\textit{Keywords:} Genealogical process, Non-neutral evolution, Sequential Monte Carlo, Wright--Fisher model

\textit{2020 MSC:} 60J90, 65C35, 92D15

\section{Introduction}

A collection of articles by the present authors and Suzie Brown have focused on scaling limits of genealogical processes for interacting particle systems describing a class of genetic algorithms, as well as biological evolution \cite{brown:2021, brownetal:2021, brownetal:2023, koskela2020annals}.
In a recent note, Sylvain Rubenthaler showed that a strong independence assumption, on which our argument relied, ruled out several interesting applications of our result \cite{rubenthaler:2023+}.
His counterexample is correct, but in this article we show that our argument can be adapted to overcome the issue.
Our proof is based on considering a quenched particle system in which the genealogical process simplifies to a collection of inhomogeneous coalescing random walks, and is substantially simpler and shorter than earlier proofs of similar results in \cite{brown:2021, brownetal:2021, brownetal:2023, koskela2020annals}. 
We also identify a natural timescale for coalescence of nonneutral particle systems (see \eqref{kingman_timescale}), and show in Section \ref{merger_ordering} that it is subtly but materially different from timescales which have been used to obtain similar scaling limits in earlier works.
From the point of view of sequential Monte Carlo (SMC)---a prominent example of the genetic algorithms to which our analysis applies---the result of Section \ref{merger_ordering} has the at-first surprising implication that, in particular edge cases, multinomial resampling can yield fewer coalescences to a common ancestor than a minimum-variance scheme such as stratified or systematic resampling.

We consider an interacting particle system in which a population of $N$ particles evolves in discrete generations.
The first generation at time zero is initialised at respective locations $X_0 := ( X_0( 1 ), \ldots, X_0( N ) )$, where $X_k(i) \in \mathcal{X}$ and we take $\mathcal{X}$ to be an arbitrary Polish space, typically $\calX \subseteq \bbR^d$.
Subsequent generations are driven by a family of non-negative potential functions $g_k : \calX \to (0, \infty)$ and Markov kernels $M_k : \calX \to \calP( \calX )$, where $\calP( \calX )$ is the set of probability measures on $\calX$.
We will assume that each $M_k(x,\cdot)$ admits a density with respect to Lebesgue measure for all $x$ and will use the same symbol for the kernel and its density.

Let $a_k = ( a_k( 1 ), \ldots, a_k( N ) )$ be a random tuple of ancestor indices taking values in $[ N ] := \{ 1, \ldots, N \}$, with distinct generations $\{ a_k | X_k, X_{ k + 1 } \}_{ k \geq 0 }$ being conditionally independent given particle locations.
We require that
\begin{equation*}
\bbP( a_k( i ) = m | X_k ) = \frac{ g_k( X_k( m ) ) }{ \sum_{ v = 1 }^N g_k( X_k( v ) ) }
\end{equation*}
marginally for each $k$.
The ill-defined event $g_k(X_k(1)) = \ldots = g_k(X_k(N)) = 0$ is ruled out by the assumption that $g_k(x) > 0$ for all $x \in \calX$.
The joint distribution of the entries of $a_k$ is determined by the so-called resampling mechanism (see e.g.\ \cite[Chapter 9]{chopin2020}).
Particle locations at generation $k + 1$ given those in generation $k$ are obtained by sampling an ancestor vector $a_k$ from its conditional distribution given $X_k$, whereupon
\begin{equation*}
\bbP\Bigg( X_{ k + 1 } \in \rmd x \Big| X_k, a_k \Bigg) = \prod_{ i = 1 }^N M_k( X_k( a_k( i ) ), x( i ) ) \rmd x( i ).
\end{equation*}
Throughout, we work on a probability space $( \Omega, \mathcal{F}, \bbP )$ which is rich enough to support a sequence of such particle systems for $N \in \bbN$.

For fixed $N$, this particle system can be seen as a non-neutral Wright--Fisher model of evolution by interpreting particle locations as genetic alleles, the potentials $g_k$ as fitnesses, and the Markov kernels $M_k$ as mutation processes \cite[Section 2]{delmoral2009}.
They also describe SMC methods: a very broad class of algorithms used in computational statistics and related disciplines \cite{chopin2020, delmoral2004}.
In both settings, the genealogical tree embedded into the particle system by the ancestor vectors is known to be important \cite{delmoral2016, delmoral2001b, delmoral2009, jacob2015, kingman1982coal, kingman1982gene, lee2018, mohle1998, mohle1999, mohle2001}.
To describe these genealogies, it will be convenient to assume that the particle system has been run for a large number of generations, and to relabel time in reverse: the terminal generation will be generation zero, their parents are generation one, etc.
We will adopt this relabelling for the remainder of the manuscript with a few exceptions for which the direction of time is stated explicitly.
Our scaling limit will require the particle system to be well-defined for all $k \in \bbN$ in this reverse-time labelling.
However, we do not require the particle system to be stationary.

For a countable set $A$, define $A_d^n \subset A^n$ as the subset of $n$-tuples of distinct elements.
Similarly, let $A_{ d, u }^n$ denote the set of unordered size-$n$ subsets of $A$.
Let $| v |$ denote the number of elements in an arbitrary tuple $v$, and let $\bbN_0 := \bbN \cup \{ 0 \}$.
It is convenient to define the genealogical process of $n$ generation-zero particles as a stochastic process $\{ G_k^{ N, n } \}_{ k \geq 0 }$ taking values in labelled partitions of $[n]$, with the $i$th partition block labelled by the index of the corresponding particle.
Throughout, we think of partition blocks ordered lexicographically for concreteness, but none of our results rely on that ordering.
We set $G_0^{ N, n } = \{ ( \{ 1 \}, \ell^1 ), \ldots, ( \{ n \}, \ell^n ) \}$ for indices $( \ell^1, \ldots, \ell^n ) \in [ N ]_d^n$.
Two or more blocks merge when the corresponding particles share an ancestor, and the resulting block is labelled by the particle index of that ancestor.
Because $\{ G_k^{ N, n } \}_{ k \geq 0 }$ does not track particle locations $\{ X_k \}_{ k \geq 0 }$ and hence location-dependent fitnesses $\{ g_k \}_{ k \geq 0 }$, it is not a Markov process in general.
We will denote by $\bar{G}_k^{ N, n }$ the partition-valued process obtained from $G_k^{ N, n }$ by removing partition labels, and write $\xi \prec \eta$ when partition $\eta$ is obtained from partition $\xi$ by merging exactly two blocks.
The setup and notation are illustrated in Figure \ref{wright-fisher}.
\begin{figure}[!ht]
\centering
\scriptsize{
	\begin{tikzpicture}
		\node[circle,draw] (X10) {};
		\node[circle,draw] (X20) [below=of X10] {};
		\node[circle,draw] (X30) [below=of X20] {};
		\node[circle,draw] (X40) [below=of X30] {};
		\node[circle,draw] (XN0) [below=of X40] {};
		\node[circle,draw] (X11) [right=of X10] {};
		\node[circle,draw] (X21) [below=of X11] {};
		\node[circle,draw] (X31) [below=of X21] {};
		\node[circle,draw] (X41) [below=of X31] {};
		\node[circle,draw] (XN1) [below=of X41] {};
		\node[circle,draw] (X12) [right=of X11] {};
		\node[circle,draw] (X22) [below=of X12] {};
		\node[circle,draw] (X32) [below=of X22] {};
		\node[circle,draw] (X42) [below=of X32] {};
		\node[circle,draw] (XN2) [below=of X42] {};
		\node[circle,draw] (X13) [right=of X12] {};
		\node[circle,draw] (X23) [below=of X13] {};
		\node[circle,draw] (X33) [below=of X23] {};
		\node[circle,draw] (X43) [below=of X33] {};
		\node[circle,draw] (XN3) [below=of X43] {};
		\node[circle,draw] (X14) [right=of X13] {};
		\node[circle,draw] (X24) [below=of X14] {};
		\node[circle,draw] (X34) [below=of X24] {};
		\node[circle,draw] (X44) [below=of X34] {};
		\node[circle,draw] (XN4) [below=of X44] {};
		\node (L1) [right=of X14] {$G_4^{(5, 5)} = \{ ( \{ 1, 2, 3, 4, 5 \}, 4 ) \}$};
		\node (L2) [right=of X24] {$G_3^{(5, 5)} = \{ ( \{ 1, 2, 3 \}, 2 ), ( \{ 4, 5 \}, 3 ) \}$};
		\node (L3) [right=of X34] {$G_2^{(5, 5)} = \{ ( \{ 1, 2, 3 \}, 2 ), ( \{ 4, 5 \}, 4 ) \}$};
		\node (L4) [right=of X44] {$G_1^{(5, 5)} = \{ ( \{ 1 \}, 1 ), ( \{ 2 \}, 2 ), ( \{ 3 \}, 3 ), ( \{ 4, 5 \}, 4 ) \}$};
		\node (LN) [right=of XN4] {$G_0^{(5, 5)} = \{ ( \{ 1 \}, 1 ), ( \{ 2 \}, 2 ), ( \{ 3 \}, 3 ), ( \{ 4 \}, 4 ), ( \{ 5 \}, 5) \}$};

		\path (X40) edge[-latex, line width=2] (XN0);
		\path (X41) edge[-latex, line width=2] (XN1);
		\path (X42) edge[-latex, line width=2] (XN2);
		\path (X43) edge[-latex, line width=2] (XN3);
		\path (X43) edge[-latex, line width=2] (XN4);
		
		\path (X31) edge[-latex, line width=2] (X40);
		\path (X31) edge[-latex, line width=2] (X41);
		\path (X31) edge[-latex, line width=2] (X42);
		\path (X33) edge[-latex, line width=2] (X43);
		\path (X34) edge[-latex] (X44);
		
		\path (X21) edge[-latex] (X30);
		\path (X21) edge[-latex, line width=2] (X31);
		\path (X22) edge[-latex] (X32);
		\path (X22) edge[-latex, line width=2] (X33);
		\path (X24) edge[-latex] (X34);

		\path (X11) edge[-latex] (X20);
		\path (X13) edge[-latex, line width=2] (X21);
		\path (X13) edge[-latex, line width=2] (X22);
		\path (X13) edge[-latex] (X23);
		\path (X14) edge[-latex] (X24);
	\end{tikzpicture}
	}
\caption{An example realisation of the interacting particle model along with the corresponding realisation of the genealogical process.
Each row is a generation consisting of $N=5$ particles with labels $1, \ldots, 5$.
The arrows point in the direction of the time-evolution of the particle system, while the time-index of the genealogical process counts generations in reverse.
Edges highlighted in bold form the ancestral tree of the population in generation 0.}
\label{wright-fisher}
\end{figure}

In \cite{koskela2020annals} we attempted to prove that, when time is suitably rescaled, the unlabelled genealogical process $\{ \bar{ G }_k^{ N, n } \}_{k \geq 0}$ converges to the Kingman coalescent as $N \to \infty$.
Our proof relied on the following formula, which is incorrect in general, for conditional transition probabilities between partitions $\xi$ and $\eta$, when $\eta$ is obtained from $\xi$ by merging some subsets of blocks:
\begin{equation}\label{eq:neutral_transition_probability}
\bbP( \bar{ G }_k^{ N, n } = \eta | \bar{ G }_{ k - 1 }^{ N, n } = \xi, \nu_k ) = \frac{ 1 }{ ( N )_{ | \xi | } } \sum_{ ( i_1, \ldots, i_{ | \eta | } ) \in [ N ]_d^{ | \eta | } } ( \nu_k( i_1 ) )_{ b_1 } \ldots ( \nu_k( i_{ | \eta | } ) )_{ b_{ | \eta | } },
\end{equation}
where $( x )_j = x ( x - 1 ) \ldots ( x - j + 1)$ is the falling factorial, $b_i$ is the number of blocks in $\xi$ which were merged to obtain the $i$th block of $\eta$, and $\nu_k := ( \nu_k( 1 ), \ldots, \nu_k( N ) )$ are the family sizes in generation $k$:
\begin{equation*}
\nu_k( i ) := | \{ j \in [ N ] : a_k( j ) = i \} |.
\end{equation*}
The assumption underlying \eqref{eq:neutral_transition_probability} has been called the \textit{random assignment condition}, since the right-hand side arises as the correct transition probability whenever conditional assignment of offspring to parents given family sizes is uniform (c.f.\ \cite[page 439, assumption 2]{mohle1998}).
The counterexample in \cite{rubenthaler:2023+} shows that this formula is valid when fitness is not hereditary so that $\nu_k$ and $\nu_j$ are independent whenever $k \neq j$, but is not a correct description of the underlying particle system when fitness is inherited because family sizes do not D-separate generations \cite[Figure 1]{brownetal:2021}.
See \cite[Chapter 3]{pearl:1988} for details of D-separation.
In settings with hereditary fitness, the probability which would be required in the convergence argument of \cite{koskela2020annals} instead of \eqref{eq:neutral_transition_probability} is 
\begin{equation*}
\bbP( G_k^{ N, n } = \eta | G_{ k - 1 }^{ N, n } = \xi, \nu_1, \nu_2, \nu_3, \ldots ),
\end{equation*}
which is not equal to the right-hand side of \eqref{eq:neutral_transition_probability}.
Our Theorem \ref{thm:main_result} replaces conditioning on family sizes with conditioning on particle locations (or equivalently, particle fitnesses), which suffice for D-separation.
Hence our theorem applies to models with hereditary fitness, for which random assignment does not hold.

In this article we replace \eqref{eq:neutral_transition_probability} with a valid expression, and show that the main results of \cite{brown:2021, brownetal:2021, brownetal:2023, koskela2020annals} are true under an additional assumption (specifically, \eqref{kingman_assumption_4} in Theorem \ref{thm:main_result} below).
In addition to the correction, we also present a substantially simplified proof which is shorter than its equivalent in either \cite{koskela2020annals} or \cite{brownetal:2023}, despite the fact that the theorem statement combines the main results of both of these predecessor articles.
The proof technique, which may be of independent interest, is based on analysing single holding times of a non-Markovian jump process, and stitching them together to obtain a Markovian scaling limit.

The precise assumptions leading to our main result, Theorem \ref{thm:main_result}, are rather technical.
However, they can be expected to hold for populations whose fitnesses $g_k$ are bounded above and away from zero, and where the mixing of fitnesses by the mutation kernels $M_k$ is faster than the rate with which lineages coalesce to common ancestors.
The assumption of fast mutation is reminiscent of a similar result by \cite{hossjer:2011} on genealogies in spatially structured populations under rapid spatial motion.
It is also a very strong assumption from the biological point of view where mutation is typically a slow process, ruling out, e.g., the ancestral selection graph \cite{krone1997}.

In the SMC context, we verify that the conditions of Theorem \ref{thm:main_result} hold under strong but standard mixing assumptions on the potentials $g_k$ and mutation kernels $M_k$ (see \eqref{eq:strong_mixing} in Proposition \ref{prop:multinomial}, as well as \eqref{eq:m_mixing} and \eqref{eq:uniform_weight_bound} in Proposition \ref{prop:stratified}).
These conditions essentially rule out non-compact state spaces but are widespread in the SMC literature, and yield much stronger ergodicity than our results require \cite{karjalainenetal:2023+}.
Hence, it is likely that they could be relaxed, at least in particular cases.
Many results are known to be robust to violations of strong mixing in practice \cite{cerou2011, chopin2004, delmoral2001, jacob2015, kuensch2005}, among them the numerical simulations in \cite[Section 3]{koskela2020annals} which did not satisfy the requisite assumptions, but for which the predictions of our scaling limit were accurate.
For these reasons, and as evidenced by the material in Section \ref{section:resampling_schemes}, we regard SMC algorithms as the primary motivation and domain of application for this work.

At first glance, the fast mutation regime may seem an uninteresting one.
In  biological contexts one is typically interested in non-neutral processes in which selection is significant, and consequently has a material effect on the genealogy, even asymptotically.
In contrast, in SMC and similar algorithms selection is something of a mechanism of last resort: it allows us to correct for an inability to sample from the true distribution of interest but comes at a price.
Many of the innovations in SMC over the past three decades can be viewed as methods to mitigate the impact of selection, and to avoid it, materially altering the resulting genealogical structure.
As such, in the computational domain, developing a good understanding of settings in which selection is not a dominant effect is of substantial interest.

\section{A restated convergence theorem}

Let $\bfX := \{ X_k \}_{ k \geq 0 }$ be the locations of all particles in all generations in a particle system run for an infinite number of generations.
For $N \geq 2$ and $2 \leq n \leq N$, let $\xi$ be a partition of $[n]$ with at least two blocks, and let $\ell = ( \ell^1, \ldots, \ell^{ | \xi | } )$ be a labelling of the partition blocks with distinct elements of $[N]$.
With the convention that $\sum_{k = j + 1}^j = 0$, let 
\begin{align}
c_N( \xi, \ell, j; k ) &:= \binom{ | \xi | }{ 2 }^{ -1 } \bbP^{ \bfX }( | \bar{ G }_k^{N, n} | < | \xi | | \bar{ G }_{ k - 1 }^{N, n} = \xi, G_j^{N, n} = ( \xi, \ell ) ), \label{kingman_timescale} \\
\tau_N( \xi, \ell, j; t ) &:= \min\Bigg\{ s \geq j : \sum_{ k = j + 1 }^s c_N( \xi, \ell, j; k ) \geq t \Bigg\} \notag 
\end{align}
be, respectively, the scaled conditional probability of at least one merger in generation $k$ given particle locations $\bfX$, particle labels $\ell$ in generation $j$, and no mergers in the intervening generations, and a family of its generalised inverses for each starting generation $j$, where $t \geq 0$.
The latter will turn out to be the appropriate timescale for obtaining our scaling limit.
The scaling of \eqref{kingman_timescale} by $1 / \binom{|\xi|}{2}$ is cosmetic and chosen to ensure that the Kingman coalescent arises as our scaling limit.

In order to state our main result, let $n \in \bbN$, $\xi = \{ \{ 1 \}, \ldots, \{ n \} \}$, and $\ell \in [ N ]_d^n$ be fixed but arbitrary.
We define $T_n$ as the first jump time of 
\begin{equation*}
( \bar{ G }_{ \tau_N( \xi, \ell, 0; t ) }^{ N, n } )_{ t \geq 0 },
\end{equation*}
and denote the end point of that jump by $( \xi( T_n ), \ell( T_n ) )$.
We also abbreviate $\tau_{ N, n }( t ) \equiv \tau_N( \xi, \ell, 0; t )$.
For $i \in \{ n - 1 , \ldots, 2 \}$, we iteratively define 
\begin{equation*}
\tau_{ N, i }( t ) \equiv \tau_N( \xi( T_{ i + 1 } ), \ell( T_{ i + 1 } ), \tau_{ N, i + 1 }( T_{ i + 1 } ); t ),
\end{equation*}
where $T_i$ is the time of the first jump of $( \bar{ G }_{ \tau_{ N, i }( t ) } )_{ t \geq 0 }$, and $( \xi( T_i ), \ell( T_i ) )$ denotes the end point of that jump.
To lighten notation we let $T_{ n + 1 } \equiv 0$, define $S_i := T_n + \ldots + T_i$ for $i = 2, \ldots, n$, and introduce the short-hand
\begin{equation*}
\bar{ G }_{ \tau_N( t ) }^{ N, n } := \sum_{ i = 2 }^n \bar{G}_{ \tau_{ N, i }( t - S_{ i + 1 } ) }^{ N, i } \mathds{ 1 }_{ [ S_{ i + 1 }, S_i ) }( t ), 
\end{equation*}
where the right-hand side is the concatenation of the genealogical processes for population size $N$ while there are $| \xi( T_{ i + 1 } ) |$ lineages.
This also implicitly defines shorthand for the concatenated timescale
\begin{equation*}
\tau_N( t ) := \sum_{ i = 2 }^n \tau_{ N, i }( t - S_{ i + 1 } ) \mathds{ 1 }_{ [ S_{ i + 1 }, S_i ) }( t ).
\end{equation*}
Note that the $t$-argument of each $\tau_{N, i}(t)$ is local, but that the ranges of these time changes are the global generations of the underlying particle system, so that e.g.\ $\tau_N( \xi, \ell, j; 0) = j$.
The concatenated timescale $\tau_N(t)$ joins the local $t$-variables of $\{ \tau_{N, i}( \cdot ) \}_{i = 2}^n$ into one global, non-decreasing timescale.

We will also let $( \Pi_t^n )_{ t \geq 0 }$ denote the Kingman $n$-coalescent, that is, the Markov process taking values in partitions of $[n]$ with $\Pi_0^n = \{ \{ 1 \}, \ldots, \{ n \} \}$ and in which each pair of blocks merges at unit rate, so that the rate with which two random blocks merge is $\binom{k}{2}$ while $| \Pi_t^n | = k$.

\begin{thm}\label{thm:main_result}
Suppose that for any $n \in \bbN$, $j < k \in \bbN_0$, $t \in ( 0, \infty )$, any partition $\xi$ of $[n]$ and any $\eta$ such that $\xi \prec \eta$, and any $\ell \in [ N ]_d^{ | \xi | }$,
\begin{align}
\lim_{ N \to \infty } \bbE[ c_N( \xi, \ell, j; k ) | X_j ] &= 0, \label{kingman_assumption_1} \\
\lim_{ N \to \infty } \bbE[ c_N( \xi, \ell, j; \tau_N( \xi, \ell, j; t ) ) | X_j ] &= 0, \label{kingman_assumption_2} \\
\lim_{ N \to \infty } \bbE\Bigg[ \sum_{ k = j + 1 }^{ \tau_N( \xi, \ell, j; t ) } c_N( \xi, \ell, j; k )^2 \Big| X_j \Bigg] &= 0, \label{kingman_assumption_3} \\
\lim_{ N \to \infty } \frac{ \bbP^{ \bfX }( \bar{ G }_{ \tau_N( \xi, \ell, j; t ) }^{ N, n } = \eta | \bar{ G }_{ \tau_N( \xi, \ell, j; t ) - 1 }^{ N, n } = \xi, G_j^{ N, n } = ( \xi, \ell ) ) }{ c_N( \xi, \ell, j; \tau_N( \xi, \ell, j; t ) ) } &= 1, \label{kingman_assumption_4}\\
\lim_{N \to \infty} \bbP( \tau_N( \xi, \ell, j; t ) = \infty ) &= 0, \label{timescale_assumption}
\end{align}
hold $\bbP$-almost surely.
Then, for any fixed $n \in \bbN$,
\begin{equation*}
\lim_{ N \to \infty } ( \bar{ G }_{ \tau_N( t ) }^{ N, n } )_{ t \geq 0 } = ( \Pi_t^n )_{ t \geq 0 }
\end{equation*}
weakly in the Skorokhod $J_1$ topology on the space of right-continuous paths with left limits.
\end{thm}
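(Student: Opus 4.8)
The plan is to establish the result first under the quenched measure $\bbP^{\bfX}$, in which the entire sequence of particle locations $\bfX$ is fixed, and then to remove the conditioning. The reason for quenching is structural: given $\bfX$, the ancestor vectors $\{ a_k \}_{k \geq 0}$ are independent across generations, so the labelled genealogy $\{ G_k^{N,n} \}_{k \geq 0}$ becomes a time-inhomogeneous Markov chain, and the unlabelled process is obtained by forgetting labels. This is precisely the Markov structure whose absence (under mere conditioning on family sizes) invalidated the argument in \cite{koskela2020annals}. Since the target law $( \Pi_t^n )_{t \geq 0}$ does not depend on $\bfX$, quenched weak convergence holding for $\bbP$-almost every $\bfX$ will upgrade to the annealed statement by bounded convergence applied to bounded continuous functionals of the Skorokhod path.

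The core of the argument is the analysis of a single holding time. Fix a configuration $(\xi, \ell)$ in generation $j$ and let $T$ be the first jump time of $( \bar{G}_{ \tau_N(\xi, \ell, j; t) }^{N,n} )_{t \geq 0}$. Because $\bar{G}_{k-1}^{N,n} = \xi$ is equivalent to the absence of any merger in generations $j+1, \ldots, k-1$, the quenched Markov property factorises the survival probability as
\begin{equation*}
\bbP^{\bfX}\big( T > t \mid G_j^{N,n} = (\xi, \ell) \big) = \prod_{k = j+1}^{ \tau_N(\xi, \ell, j; t) } \left( 1 - \binom{ | \xi | }{ 2 } c_N(\xi, \ell, j; k) \right).
\end{equation*}
Taking logarithms and expanding $\log( 1 - x ) = -x + O( x^2 )$, assumption \eqref{kingman_assumption_1} renders each summand uniformly small, assumption \eqref{kingman_assumption_3} controls the quadratic remainder, and the definition of the generalised inverse together with the overshoot bound \eqref{kingman_assumption_2} gives $\sum_{k = j+1}^{ \tau_N(\xi, \ell, j; t) } c_N(\xi, \ell, j; k) \to t$. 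Hence the right-hand side converges to $\exp( - \binom{ | \xi | }{ 2 } t )$, so that $T$ converges in distribution to an Exponential random variable of rate $\binom{ | \xi | }{ 2 }$, matching the first-merger time of the Kingman coalescent on $| \xi |$ blocks; finiteness of $T$ at finite $N$ is guaranteed by \eqref{timescale_assumption}. Some care is needed here with the two layers of conditioning: the factorisation is a statement under the full quench $\bbP^{\bfX}$ (hence random in $\bfX$), whereas the controls \eqref{kingman_assumption_1}--\eqref{kingman_assumption_3} are phrased through $\bbE[ \cdot \mid X_j ]$, so one passes these bounds to the quenched sums in conditional probability (via a conditional Markov inequality) before taking expectations.

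It then remains to identify the jump and to stitch successive holding times together. Conditionally on a merger occurring at the stopping generation $\tau_N(\xi, \ell, j; t)$, the total merger probability is $\binom{ | \xi | }{ 2 } c_N(\xi, \ell, j; \tau_N)$, while \eqref{kingman_assumption_4} shows that each admissible binary merger $\xi \prec \eta$ carries probability asymptotic to $c_N(\xi, \ell, j; \tau_N)$. Summing over the $\binom{ | \xi | }{ 2 }$ choices of $\eta$ exhausts the total merger mass, so non-binary transitions are asymptotically negligible and the first jump is a merger of a uniformly chosen pair of blocks: exactly the Kingman jump kernel. I would then invoke the quenched strong Markov property at the first jump time: given that the jump lands in configuration $( \xi( T_{i+1} ), \ell( T_{i+1} ) )$ at generation $\tau_{N, i+1}( T_{i+1} )$, the iterated timescale $\tau_{N, i}$ restarts the genealogy from a fresh configuration to which the single-holding-time analysis applies verbatim. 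Because the limiting holding-time law depends on the post-jump state only through the block count $| \xi( T_{i+1} ) |$, the randomness of the genuinely $N$-dependent restart point $( \ell( T_{i+1} ), \tau_{N, i+1}( T_{i+1} ) )$ is immaterial in the limit. Iterating from $n$ down to $2$ blocks yields asymptotically independent Exponential holding times of rates $\binom{n}{2}, \ldots, \binom{2}{2}$ interleaved with uniform binary mergers, which characterises $( \Pi_t^n )_{t \geq 0}$.

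Finally, since the limit is a pure-jump process with exactly $n - 1$ jumps occurring at almost surely distinct times, joint convergence of the holding times $( T_n, \ldots, T_2 )$ and of the embedded jump chain transfers to convergence of the concatenated paths $( \bar{G}_{ \tau_N(t) }^{N,n} )_{t \geq 0}$ in the Skorokhod $J_1$ topology, by the standard criterion for finite-activity jump processes with non-degenerate limiting jump times. I expect the main obstacle to be this stitching step rather than the single-holding-time limit: the assumptions \eqref{kingman_assumption_1}--\eqref{timescale_assumption} are stated for fixed $(\xi, \ell, j)$, so combining them across the random, $N$-dependent restart points produced by the nested time-changes $\tau_{N, i}$ — while simultaneously preserving the asymptotic independence of consecutive holding times — is the delicate part, and is precisely where the quenched (strong) Markov property must be deployed with care.
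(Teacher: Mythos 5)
Your proposal follows the same skeleton as the paper's proof: a quenched product formula for the survival function of the first holding time, an exponential limit for that holding time, identification of the jump as a uniform binary merger via \eqref{kingman_assumption_4}, concatenation across successive mergers, and a tightness argument for $J_1$-convergence. Two points are worth flagging. First, in the holding-time step you take logarithms and expand $\log(1-x) = -x + O(x^2)$, whereas the paper expands the product directly and controls the cross terms with a combinatorial inequality (Lemma \ref{lem:multinomial}). Your route is viable and arguably cleaner for the upper bound (since $\log(1-x) \leq -x$ gives $\prod_k (1 - \binom{|\xi|}{2}c_N(\xi,\ell,j;k)) \leq e^{-\binom{|\xi|}{2}t}$ immediately), but for the lower bound the $O(x^2)$ remainder requires $\binom{|\xi|}{2}c_N(\xi,\ell,j;k)$ to be bounded away from $1$, and the hypotheses control the $c_N$'s only in conditional mean, not pointwise; you therefore need an extra truncation step, e.g.\ restricting to the event $\{\max_k \binom{|\xi|}{2}c_N(\xi,\ell,j;k) \leq 1/2\}$, whose complement has conditional probability tending to zero by \eqref{kingman_assumption_3} and Markov's inequality. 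Second, you correctly identify the stitching of consecutive holding times as the delicate step, but you leave it at "deploy the quenched strong Markov property with care" without executing it. The resolution --- and the reason the paper states \eqref{kingman_assumption_1}--\eqref{kingman_assumption_3} as conditional expectations given $X_j$ for \emph{every} starting generation $j$, rather than unconditionally --- is that after conditioning on $G^{N,n}_{\tau_N(\xi,\ell,j;t_1)} = (\eta,\ell')$ and using conditional independence of past and future given $\bfX$ and that labelled state, the tower law reduces the post-jump survival probability to $\bbE[\,\bbP^{\bfX}(\cdot)\mid X_{\tau_N(\xi,\ell,j;t_1)}]$, to which the single-holding-time result applies verbatim at the random restart generation and label. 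Making that tower-law computation explicit is the only substantive piece missing from your sketch; the remaining parts (uniformity of the merger, at most $n-1$ jumps, modulus-of-continuity control) match the paper's argument.
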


\begin{rem}
Assumption \eqref{kingman_assumption_4} is strong.
Heuristically, it is satisfied by particle systems in which particle locations mix fast enough that any information about particle weights at the time of a merger is lost by the time of the following merger.
In such a particle system, the fact that the parent of two (or more) coalescing particles is likely to be fitter than average will not inform particle fitnesses at the time of the next coalescence event.
In Section \ref{section:resampling_schemes} we verify that standard resampling schemes satisfy \eqref{kingman_assumption_4} under standard strong mixing conditions, and also make explicit links between the assumptions \eqref{kingman_assumption_1}--\eqref{kingman_assumption_4} and the model ingredients $g_k$ and $M_k$.
The exact assumptions needed on $g_k$, $M_k$ to ensure \eqref{kingman_assumption_1}--\eqref{kingman_assumption_4} depend on the resampling scheme.
The fact that we only require \eqref{kingman_assumption_4} to hold asymptotically is crucial; the assumption cannot be expected to hold for finite $N$ even for neutral particle systems because more than two lineages can merge in one generation with positive probability.
Assumption \eqref{timescale_assumption} ensures that the limiting Kingman coalescent has an infinite lifetime.
\end{rem}

\begin{rem}
In the neutral case, when the potentials $g_k$ are constant functions and \eqref{kingman_assumption_4} holds by construction, a condition analogous to
\begin{equation*}
\lim_{ N \to \infty }\frac{ \bbE[ \bbP^{ \bfX }( | \bar{ G }_k^{ N, n } | = 1 \big| | \bar{ G }_{ k - 1 }^{ N, n } | = 3 ) ] }{ \bbE[ \bbP^{ \bfX }( | \bar{ G }^{ N, n }_k | = 1 \big| | \bar{ G }_{ k - 1 }^{ N, n } | = 2 ) ] } = 0
\end{equation*}
is necessary and sufficient for weak convergence of suitably time-rescaled genealogical processes to the Kingman coalescent \cite{mohle2003}.
In particular, it implies \eqref{kingman_assumption_1}--\eqref{kingman_assumption_3} in that setting.
However, these implications rely on an explicit transition probability formula resembling \eqref{eq:neutral_transition_probability}, which does not hold in the general non-neutral case.
Hence, we need to resort to the more cumbersome conditions \eqref{kingman_assumption_1}--\eqref{kingman_assumption_4}.
\end{rem}

\begin{proof}
We will prove the theorem in four parts: first by showing convergence of the holding time until a jump of $( \bar{G}_{ \tau_N( \xi, \ell, j; t ) }^{ N, n } )_{ t \geq 0 }$ to an exponentially distributed random variable with rate $\binom{n}{2}$ in Part 1, second by showing that the merger event is between exactly two uniformly chosen blocks in Part 2, and third by demonstrating that we can concatenate waiting times and mergers to construct the whole process from initial condition $\{ \{ 1 \}, \ldots, \{ n \} \}$ to the most recent common ancestor $\{ \{ 1, \ldots, n \} \}$ in Part 3.
Conditioning on $X_j$ in \eqref{kingman_assumption_1}--\eqref{kingman_assumption_3} will appear superfluous until the third step.
In Part 4, we control the modulus of continuity to prove weak convergence.

\textbf{Part 1.} We begin by showing convergence of the holding time until a jump of $( \bar{G}_{ \tau_N( \xi, \ell, j; t ) }^{ N, n } )_{ t \geq 0 }$ to an exponentially distributed random variable with rate $\binom{n}{2}$.

From an initial labelled partition $( \xi, \ell )$ in generation $j \in \bbN_0$, the conditional survivor function of the next jump given particle locations $\bfX$ is
\begin{align}
&\bbP^{ \bfX }(  \bar{ G }_{ \tau_N( \xi, \ell, j; t ) }^{ N, n } = \xi | G_j^{ N, n } = ( \xi, \ell ) ) \notag \\
&= \bbP^{ \bfX }( \bar{ G }_{ j + 1 }^{ N, n } = \xi, \ldots, \bar{ G }_{ \tau_N( \xi, \ell, j; t ) }^{ N, n } = \xi | G_j^{ N, n } = ( \xi, \ell ) ) \notag \\
&= \prod_{ k = j + 1 }^{ \tau_N( \xi, \ell, j; t ) } \bbP^{ \bfX }( \bar{ G }_k^{ N, n } = \xi | \bar{ G }_{ k - 1 }^{ N, n } = \xi, G_j^{ N, n } = ( \xi, \ell ) ) \notag \\
&= \prod_{ k = j + 1 }^{ \tau_N( \xi, \ell, j; t ) } \left[ 1 - \bbP^{ \bfX }\left( | \bar{ G }_k^{ N, n } | < | \xi | \middle| \bar{ G }_{ k - 1 }^{ N, n } = \xi, G_j^{ N, n } = ( \xi, \ell ) \right) \right], \label{survivor_function_ub}
\end{align}
because $\tau_N(\xi, \ell, j; t)$ is $\sigma(\bfX)$-measurable.
We now need the generic inequality
\begin{align}
&( - 1 )^{ \alpha } \sum_{ ( k_1, \ldots, k_{ \alpha } ) \in [ \tau ]_d^{ \alpha } } \prod_{ m = 1 }^{ \alpha } c( k_m ) \notag\\
&\leq ( - 1 )^{ \alpha }\sum_{(k_1, \ldots, k_{ \alpha }) \in[\tau]^\alpha}  \prod_{ m = 1 }^{ \alpha } c( k_m ) + \binom{ \alpha }{ 2 } \sum_{ k = 1 }^{ \tau } c( k )^2 \sum_{(k_1, \ldots, k_{ \alpha - 2 }) \in [\tau]^{\alpha-2} } \prod_{ m = 1 }^{ \alpha - 2 } c( k_m ), \label{eq:repeated_terms_bound}
\end{align}
for $\tau \geq \alpha \geq 1$ and coefficients $c( k ) \geq 0$, which follows from Lemma \ref{lem:multinomial} in Appendix~\ref{app:lemmas} by noting that
\begin{align*}
\Bigg(\sum_{i=1}^N x_i\Bigg)^\alpha &= \sum_{i_1,\dots,i_\alpha = 1}^N \prod_{m=1}^\alpha x_{i_m},
\end{align*}
and rearranging.
In particular if $\alpha$ is odd then multiply \eqref{eq:multinomial} in the Appendix by $(-1)^\alpha$ (which reverses the inequality) and rearrange; if $\alpha$ is even then \eqref{eq:repeated_terms_bound} is trivial because each term on the left-hand side can be matched with a term in the first sum on the right-hand side.

Expanding the product on the right-hand side of \eqref{survivor_function_ub} and using \eqref{eq:repeated_terms_bound}, we obtain
\begin{align*}
&\bbP^{ \bfX }(  \bar{ G }_{ \tau_N( \xi, \ell, j; t ) }^{ N, n } = \xi | G_j^{ N, n } = ( \xi, \ell ) )  \\
&= 1 + \sum_{ \alpha = 1 }^{ \tau_N( \xi, \ell, j; t ) - j } \sum_{ j + 1 \leq k_1 < \ldots < k_{ \alpha } \leq \tau_N( \xi, \ell, j; t ) } ( -1 )^{ \alpha } \\
&\phantom{= 1 + \sum_{ \alpha = 1 }^{ \tau_N( \xi, \ell, j; t ) - j } \sum_{ j + 1 \leq k_1 < \ldots < k_{ \alpha }}} \times  \prod_{ m = 1 }^{ \alpha } \bbP^{ \bfX }( | \bar{ G }_{ k_m }^{ N, n } | < | \xi | | \bar{ G }_{ k_m - 1 }^{ N, n } = \xi, G_j^{ N, n } = ( \xi, \ell ) ) \\
&= 1 + \sum_{ \alpha = 1 }^{ \tau_N( \xi, \ell, j; t ) - j } \frac{ ( -1 )^{ \alpha } }{ \alpha ! } \binom{ | \xi | }{ 2 }^{ \alpha } \sum_{ ( k_1, \ldots, k_{ \alpha } ) \in \{ j + 1, \ldots, \tau_N( \xi, \ell, j; t ) \}_d^{ \alpha } } \prod_{ m = 1 }^{ \alpha } c_N( \xi, \ell, j; k_m ) \\
&\leq 1 + \sum_{ \alpha = 1 }^{ \tau_N( \xi, \ell, j; t ) - j } \frac{ ( -1 )^{ \alpha } }{ \alpha ! } \binom{ | \xi | }{ 2 }^{ \alpha } \sum_{ k_1, \ldots, k_{ \alpha } = j + 1 }^{ \tau_N( \xi, \ell, j; t ) } \prod_{ m = 1 }^{ \alpha } c_N( \xi, \ell, j; k_j ) \\
&\phantom{\leq 1} + \sum_{ \alpha = 2 }^{ \tau_N( \xi, \ell, j; t ) - j } \frac{ 1 }{ \alpha ! } \binom{ | \xi | }{ 2 }^{ \alpha } \binom{ \alpha }{ 2 } \sum_{ k = j + 1 }^{ \tau_N( \xi, \ell, j; t ) } c_N( \xi, \ell, j; k )^2  \sum_{ k_1, \ldots, k_{ \alpha - 2 } = j + 1 }^{ \tau_N( \xi, \ell, j; t ) } \prod_{ m = 1 }^{ \alpha - 2 } c_N( \xi, \ell, j; k_m ) \\
&= \sum_{ \alpha = 0 }^{ \tau_N( \xi, \ell, j; t ) - j } \frac{ ( -1 )^{ \alpha } }{ \alpha ! } \binom{ | \xi | }{ 2 }^{ \alpha } \Bigg( \sum_{ k = j + 1 }^{ \tau_N( \xi, \ell, j; t ) } c_N( \xi, \ell, j; k ) \Bigg)^{ \alpha } \\
&\phantom{\leq 1}+ \sum_{ \alpha = 2 }^{ \tau_N( \xi, \ell, j; t ) - j } \frac{ 1 }{ \alpha ! } \binom{ | \xi | }{ 2 }^{ \alpha } \binom{ \alpha }{ 2 } \Bigg( \sum_{ k = j + 1 }^{ \tau_N( \xi, \ell, j; t ) } c_N( \xi, \ell, j; k ) \Bigg)^{ \alpha - 2 } \sum_{ k = j + 1 }^{ \tau_N( \xi, \ell, j; t ) } c_N( \xi, \ell, j; k )^2.
\end{align*}
By definition of $\tau_N( \xi, \ell, j; t )$,
\begin{equation}\label{generalised_inverse_sandwich}
t \leq \sum_{ k = j + 1 }^{ \tau_N( \xi, \ell, j; t ) } c_N( \xi, \ell, j; k ) \leq t + c_N( \xi, \ell, j; \tau_N( \xi, \ell, j; t ) ) \leq t + 1,
\end{equation}
which yields
\begin{align*}
&\bbP^{ \bfX }(  \bar{ G }_{ \tau_N( \xi, \ell, j; t ) }^{ N, n } = \xi | G_j^{ N, n } = ( \xi, \ell ) ) \\
&\leq \sum_{ \alpha = 0, \text{ even} }^{ \tau_N( \xi, \ell, j; t ) - j } \frac{ ( -1 )^{ \alpha } }{ \alpha ! } \binom{ | \xi | }{ 2 }^{ \alpha } [ t + c_N( \xi, \ell, j; \tau_N( \xi, \ell, j; t ) ) ]^{ \alpha } + \sum_{ \alpha = 1, \text{ odd} }^{ \tau_N( \xi, \ell, j; t ) - j } \frac{ ( -t )^{ \alpha } }{ \alpha ! } \binom{ | \xi | }{ 2 }^{ \alpha } \\
&\phantom{\leq} + \frac{ 1 }{ 2 } \binom{ | \xi | }{ 2 }^2 \sum_{ \alpha = 2 }^{ \tau_N( \xi, \ell, j; t ) - j } \frac{ 1 }{ ( \alpha - 2 ) ! } \binom{ | \xi | }{ 2 }^{ \alpha - 2 } ( t + 1 )^{ \alpha - 2 } \sum_{ k = j + 1 }^{ \tau_N( \xi, \ell, j; t ) } c_N( \xi, \ell, j; k )^2
\end{align*}
For $c \in [0, 1]$ we also have
\begin{equation}\label{elementary_binomial_bound}
( t + c )^{ \alpha } = \sum_{ \beta = 0 }^{ \alpha } \binom{ \alpha }{ \beta } t^{ \beta } c^{ \alpha - \beta } = t^{ \alpha } + c \sum_{ \beta = 0 }^{ \alpha - 1 } \binom{ \alpha }{ \beta } t^{ \beta } c^{ \alpha - 1 - \beta } \leq t^{ \alpha } + \alpha ( t + 1 )^{ \alpha - 1 } c,
\end{equation}
so that
\begin{align*}
&\bbP^{ \bfX }(  \bar{ G }_{ \tau_N( \xi, \ell, j; t ) }^{ N, n } = \xi | G_j^{ N, n } = ( \xi, \ell ) ) \\
&\leq \sum_{ \alpha = 0, \text{ even} }^{ \tau_N( \xi, \ell, j; t ) - j } \frac{ ( -1 )^{ \alpha } }{ \alpha ! } \binom{ | \xi | }{ 2 }^{ \alpha } [ t^{ \alpha } + \alpha ( t + 1 )^{ \alpha - 1 } c_N( \xi, \ell, j; \tau_N( \xi, \ell, j; t ) ) ] \\
&\phantom{\leq} + \sum_{ \alpha = 1, \text{ odd} }^{ \tau_N( \xi, \ell, j; t ) - j } \frac{ ( -1 )^{ \alpha } }{ \alpha ! } \binom{ | \xi | }{ 2 }^{ \alpha } t^{ \alpha } \\
&\phantom{\leq} + \frac{ 1 }{ 2 } \binom{ | \xi | }{ 2 }^2 \sum_{ \alpha = 0 }^{ \tau_N( \xi, \ell, j; t ) - j } \frac{ 1 }{ \alpha ! } \binom{ | \xi | }{ 2 }^{ \alpha } ( t + 1 )^{ \alpha } \sum_{ k = j + 1 }^{ \tau_N( \xi, \ell, j; t ) } c_N( \xi, \ell, j; k )^2 \\
&\leq \sum_{ \alpha = 0 }^{ \infty } \binom{ | \xi | }{ 2 }^{ \alpha } \frac{ ( -t )^{ \alpha } }{ \alpha ! } \mathds{ 1 }\{ \tau_N( \xi, \ell, j; t ) \geq \alpha + j \} \\
&\phantom{\leq} +  c_N( \xi, \ell, j; \tau_N( \xi, \ell, j; t ) ) \binom{ | \xi | }{ 2 } \sum_{ \alpha = 1 }^{ \infty } \frac{ ( t + 1 )^{ \alpha - 1 } }{ ( \alpha - 1 ) ! } \binom{ | \xi | }{ 2 }^{ \alpha - 1 } \\
&\phantom{\leq} + \frac{ 1 }{ 2 } \binom{ | \xi | }{ 2 }^2 \sum_{ \alpha = 0 }^{ \tau_N( \xi, \ell, j; t ) - j } \frac{ 1 }{ \alpha ! } \binom{ | \xi | }{ 2 }^{ \alpha } ( t + 1 )^{ \alpha } \sum_{ k = j + 1 }^{ \tau_N( \xi, \ell, j; t ) } c_N( \xi, \ell, j; k )^2 \\
&\leq \sum_{ \alpha = 0 }^{ \infty } \frac{ ( -t )^{ \alpha } }{ \alpha ! } \binom{ | \xi | }{ 2 }^{ \alpha } \mathds{ 1 }\{ \tau_N( \xi, \ell, j; t ) \geq \alpha + j \} \\
&\phantom{\leq} + \exp\Bigg( \binom{ | \xi | }{ 2 } ( t + 1 ) \Bigg)  \Bigg(  \binom{ | \xi | }{ 2 } c_N( \xi, \ell, j; \tau_N( \xi, \ell, j; t ) ) + \frac{ 1 }{ 2 }  \binom{ | \xi | }{ 2 }^2 \sum_{ k = j + 1 }^{ \tau_N( \xi, \ell, j; t ) } c_N( \xi, \ell, j; k )^2 \Bigg).
\end{align*}
To show that $\lim_{ N \to \infty } \bbE[ \mathds{ 1 }\{ \tau_N( \xi, \ell, j; t ) > \alpha + j \} | X_j ] = 1$ we follow the argument on \cite[Page 572]{koskela2020annals}: by definition of $\tau_N$, Markov's inequality, and \eqref{kingman_assumption_1},
\begin{align}
\lim_{ N \to \infty } \bbE[ \mathds{ 1 }\{ \tau_N( \xi, \ell, j; t ) > \alpha \} | X_j ]&= 1 - \lim_{ N \to \infty } \bbP( \tau_N( \xi, \ell, j; t ) \leq \alpha | X_j ) \notag \\
&= 1 - \lim_{ N \to \infty } \bbP\Bigg( \sum_{ k = j + 1 }^{ \alpha } c_N( \xi, \ell, j; k ) \geq t \Big| X_j \Bigg) \notag \\
&\geq 1 - \lim_{ N \to \infty } \sum_{ k = 1 }^{ \alpha } \frac{ \bbE[ c_N( \xi, \ell, j; k ) | X_j ] }{ t } = 1 \label{indicator_limit_2}
\end{align}
for every $\alpha \in \bbN$.
Hence, by \eqref{kingman_assumption_2}, \eqref{kingman_assumption_3}, and \eqref{indicator_limit_2},
\begin{equation} \label{exponential_upper}
\lim_{ N \to \infty } \bbE[ \bbP^{ \bfX }( \bar{ G }_{ \tau_N( \xi, \ell, j; t ) }^{ N, n } = \xi | G_j^{ N, n } = ( \xi, \ell ) ) | X_j ] \leq \exp\Bigg( - \binom{ | \xi | }{ 2 } t \Bigg),
\end{equation}
which is the survivor function of the holding time of the Kingman coalescent started from partition $\xi$.

For a corresponding lower bound, we will need the following variant of \eqref{eq:repeated_terms_bound} which follows from \eqref{eq:multinomial} in Appendix~\ref{app:lemmas} if $\alpha$ is even, and which is trivial when $\alpha$ is odd:
\begin{align}
&( - 1 )^{ \alpha } \sum_{ ( k_1, \ldots, k_{ \alpha } ) \in [ \tau ]_d^{ \alpha } } \prod_{ m = 1 }^{ \alpha } c( k_m ) \notag\\
&\geq ( - 1 )^{ \alpha }\sum_{(k_1, \ldots, k_{ \alpha }) \in[\tau]^\alpha}  \prod_{ m = 1 }^{ \alpha } c( k_m ) - \binom{ \alpha }{ 2 } \sum_{ k = 1 }^{ \tau } c( k )^2 \sum_{(k_1, \ldots, k_{ \alpha - 2 }) \in [\tau]^{\alpha-2} } \prod_{ m = 1 }^{ \alpha - 2 } c( k_m ). \label{eq:repeated_terms_lower_bound}
\end{align}

Expanding \eqref{survivor_function_ub} and using \eqref{eq:repeated_terms_lower_bound}, we obtain
\begin{align*}
&\bbP^{ \bfX }(  \bar{ G }_{ \tau_N( \xi, \ell, j; t ) }^{ N, n } = \xi | G_j^{ N, n } = ( \xi, \ell ) ) \\
&= 1 + \sum_{ \alpha = 1 }^{ \tau_N( \xi, \ell, j; t ) - j } \sum_{ j + 1 \leq k_1 < \ldots < k_{ \alpha } \leq \tau_N( \xi, \ell, j; t ) }  ( -1 )^{ \alpha } \\
&\phantom{= 1 + \sum_{ \alpha = 1 }^{ \tau_N( \xi, \ell, j; t ) - j } \sum_{ j + 1 \leq k_1 < \ldots < k_{ \alpha }}} \times \prod_{ m = 1 }^{ \alpha } \bbP^{ \bfX }( | \bar{ G }_{ k_m }^{ N, n } | < | \xi | | \bar{ G }_{ k_m - 1 }^{ N, n } = \xi, G_j^{ N, n } = ( \xi, \ell ) ) \\
&\geq 1 + \sum_{ \alpha = 1 }^{ \tau_N( \xi, \ell, j; t ) - j } \frac{ ( -1 )^{ \alpha } }{ \alpha ! } \binom{ | \xi | }{ 2 }^{ \alpha } \sum_{ k_1, \ldots, k_{ \alpha } = j + 1 }^{ \tau_N( \xi, \ell, j; t ) } \prod_{ m = 1 }^{ \alpha } c_N( \xi, \ell, j; k_m ) \\
&\phantom{\geq 1}\,\, - \sum_{ \alpha = 2 }^{ \tau_N( \xi, \ell, j; t ) - j } \frac{ 1 }{ \alpha ! } \binom{ | \xi | }{ 2 }^{ \alpha } \binom{ \alpha }{ 2 } \sum_{ k = j + 1 }^{ \tau_N( \xi, \ell, j; t ) } c_N( \xi, \ell, j; k )^2  \sum_{ k_1, \ldots, k_{ \alpha - 2 } = j + 1 }^{ \tau_N( \xi, \ell, j; t ) } \prod_{ m = 1 }^{ \alpha - 2 } c_N( \xi, \ell, j; k_m ) \\
&= \sum_{ \alpha = 0 }^{ \tau_N( \xi, \ell, j; t ) - j } \frac{ ( -1 )^{ \alpha } }{ \alpha ! } \binom{ | \xi | }{ 2 }^{ \alpha } \Bigg( \sum_{ k = j + 1 }^{ \tau_N( \xi, \ell, j; t ) }  c_N( \xi, \ell, j; k ) \Bigg)^{ \alpha } \\
&\phantom{\geq 1} - \sum_{ \alpha = 2 }^{ \tau_N( \xi, \ell, j; t ) - j } \frac{ 1 }{ \alpha ! } \binom{ | \xi | }{ 2 }^{ \alpha } \binom{ \alpha }{ 2 } \Bigg( \sum_{ k = j + 1 }^{ \tau_N( \xi, \ell, j; t ) } c_N( \xi, \ell, j; k ) \Bigg)^{ \alpha - 2 } \sum_{ k = j + 1 }^{ \tau_N( \xi, \ell, j; t ) } c_N( \xi, \ell, j; k )^2.
\end{align*}
Using the bounds in \eqref{generalised_inverse_sandwich} and \eqref{elementary_binomial_bound}, 
\begin{align*}
\bbP^{ \bfX }&(  \bar{ G }_{ \tau_N( \xi, \ell, j; t ) }^{ N, n } = \xi | G_j^{ N, n } = ( \xi, \ell ) ) \\
&\geq \sum_{ \alpha = 0, \text{ even} }^{ \tau_N( \xi, \ell, j; t ) - j } \frac{ ( -1 )^{ \alpha } }{ \alpha ! } \binom{ | \xi | }{ 2 }^{ \alpha } t^{ \alpha } \\
&\phantom{\geq} + \sum_{ \alpha = 1, \text{ odd} }^{ \tau_N( \xi, \ell, j; t ) - j } \frac{ ( -1 )^{ \alpha } }{ \alpha ! } \binom{ | \xi | }{ 2 }^{ \alpha } [ t^{ \alpha } + \alpha ( t + 1 )^{ \alpha - 1 } c_N( \xi, \ell, j; \tau_N( \xi, \ell, j; t ) ) ] \\
&\phantom{\geq} - \sum_{ \alpha = 2 }^{ \tau_N( \xi, \ell, j; t ) - j } \frac{ 1 }{ \alpha ! } \binom{ | \xi | }{ 2 }^{ \alpha } \binom{ \alpha }{ 2 } ( t + 1 )^{ \alpha - 2 } \sum_{ k = j + 1 }^{ \tau_N( \xi, \ell, j; t ) } c_N( \xi, \ell, j; k )^2 \\
&\geq \sum_{ \alpha = 0 }^{ \infty } \frac{ ( -t )^{ \alpha } }{ \alpha ! } \binom{ | \xi | }{ 2 }^{ \alpha } \mathds{ 1 }\{ \tau_N( \xi, \ell, j; t ) \geq \alpha + j \} \\
&\phantom{\geq} - c_N( \xi, \ell, j; \tau_N( \xi, \ell, j; t ) ) \binom{ | \xi | }{ 2 } \sum_{ \alpha = 1 }^{ \infty } \frac{ ( t + 1 )^{ \alpha - 1 } }{ ( \alpha - 1 ) ! } \binom{ | \xi | }{ 2 }^{ \alpha - 1 } \\
&\phantom{\geq} - \binom{ | \xi | }{ 2 }^2 \sum_{ \alpha = 2 }^{ \infty } \frac{ 1 }{ \alpha ! } \binom{ | \xi | }{ 2 }^{ \alpha - 2 } \binom{ \alpha }{ 2 } ( t + 1 )^{ \alpha - 2 } \sum_{ k = j + 1 }^{ \tau_N( \xi, \ell, j; t ) } c_N( \xi, \ell, j; k )^2.
\end{align*}
As for the upper bound, by \eqref{kingman_assumption_2}, \eqref{kingman_assumption_3}, and \eqref{indicator_limit_2}, we have that
\begin{equation} \label{exponential_lower}
\lim_{ N \to \infty } \bbE[ \bbP^{ \bfX }(  \bar{ G }_{ \tau_N( \xi, \ell, j; t ) }^{ N, n } = \xi | G_j^{ N, n } = ( \xi, \ell ) ) ] \geq \exp\Bigg( - \binom{ | \xi | }{ 2 } t \Bigg).
\end{equation}

\textbf{Part 2.} Next we show that the merger event is between exactly two uniformly chosen blocks.

Since \eqref{kingman_assumption_4} holds for any $\eta$ such that $\xi \prec \eta$, of which there are $\binom{ | \xi | }{ 2 }$, mergers involving more than two lineages occur at a rate which vanishes as $N \to \infty$.
Also by \eqref{kingman_assumption_4}, the lineages involved in a binary merger are sampled uniformly in the $N \to \infty$ limit:
\begin{align*}
&\lim_{ N \to \infty } \bbE[ \bbP^{ \bfX }( \bar{ G }_{ \tau_N( \xi, \ell, j; t ) }^{ N, n } = \eta, \bar{ G }_{ \tau_N( \xi, \ell, j; t ) - 1 }^{ N, n } = \xi | G_j^{ N, n } = ( \xi, \ell ) ) ]  \\
&= \lim_{ N \to \infty } \bbE[ \bbP^{ \bfX }( \bar{ G }_{ \tau_N( \xi, \ell, j; t ) }^{ N, n } = \eta | \bar{ G }_{ \tau_N( \xi, \ell, j; t ) - 1 }^{ N, n } = \xi, G_j^{ N, n } = ( \xi, \ell ) )  \\
&\phantom{= \lim_{ N \to \infty } \bbE[} \times \bbP^{ \bfX }( \bar{ G }_{ \tau_N( \xi, \ell, j; t ) - 1 }^{ N, n } = \xi | G_j^{ N, n } = ( \xi, \ell ) ) ] \\
&= \lim_{ N \to \infty } \bbE[ c_N( \xi, \ell, j; \tau_N( \xi, \ell, j; t ) ) \bbP^{ \bfX }( \bar{ G }_{ \tau_N( \xi, \ell, j; t ) - 1 }^{ N, n } = \xi | G_j^{ N, n } = ( \xi, \ell ) ) ],
\end{align*}
where the last line is justified by the Dominated Convergence Theorem since the integrand is trivially bounded by one.
The right-hand side does not depend on $\eta$, as required.

\textbf{Part 3.} We now verify that we can concatenate waiting times and mergers to construct the whole process from initial condition $\{ \{ 1 \}, \ldots, \{ n \} \}$ to the most recent common ancestor $\{ \{ 1, \ldots, n \} \}$.

To see that waiting times and mergers can be concatenated to construct a process, let $0 < t_1 < t_2 < \infty$ and $j \in \bbN_0$, and fix $\xi \prec \eta$ with $| \xi | \geq 3$.
Then, by definition of conditional probability,
\begin{align*}
&\sum_{ \ell' \in [ N ]_d^{ | \eta | } } \bbE[ \bbP^{ \bfX }( \bar{ G }_{ \tau_N( \eta, \ell', \tau_N( \xi, \ell, j; t_1 ); t_2 - t_1 ) }^{ N, n } = \eta, G_{ \tau_N( \xi, \ell, j; t_1 ) }^{ N, n } = ( \eta, \ell' ), \\
&\phantom{\bbE[ \bbP^{ \bfX }( \bar{ G }_{ \tau_N( \eta, \ell', \tau_N( \xi, \ell, j; t_1 ); t_2 - t_1 ) }^{ N, n } = \eta, } \mspace{67mu} \bar{ G }_{ \tau_N( \xi, \ell, j; t_1 ) - 1 }^{ N, n } = \xi | G_j^{ N, n } = ( \xi, \ell ) ) ]  \\
&= \sum_{ \ell' \in [ N ]_d^{ | \eta | } } \bbE[ \bbP^{ \bfX }( \bar{ G }_{ \tau_N( \xi, \ell, j; t_1 ) - 1 }^{ N, n } = \xi | G_j^{ N, n } = ( \xi, \ell ) ) \\
&\phantom{=} \times \bbP^{ \bfX }( \bar{ G }_{ \tau_N( \xi, \ell, j; t_1 ) }^{ N, n } = \eta | \bar{ G }_{ \tau_N( \xi, \ell, j; t_1 ) - 1 }^{ N, n } = \xi, G_j^{ N, n } = ( \xi, \ell ) ) \\
&\phantom{=} \times \bbP^{ \bfX }( G_{ \tau_N( \xi, \ell, j; t_1 ) }^{ N, n } = ( \eta, \ell' ) | \bar{ G }_{ \tau_N( \xi, \ell, j; t_1 ) }^{ N, n } = \eta, \bar{ G }_{ \tau_N( \xi, \ell, j; t_1 ) - 1 }^{ N, n } = \xi, G_j^{ N, n } = ( \xi, \ell ) ) \\
&\phantom{=} \times \bbP^{ \bfX }( \bar{ G }_{ \tau_N( \eta, \ell', \tau_N( \xi, \ell, j; t_1 ); t_2 - t_1 ) }^{ N, n } = \eta | G_{ \tau_N( \xi, \ell, j; t_1 ) }^{ N, n } = ( \eta, \ell' ), \bar{ G }_{ \tau_N( \xi, \ell, j; t_1 ) - 1 }^{ N, n } = \xi,  G_j^{ N, n } = ( \xi, \ell ) ) ].
\end{align*}
The sum on the right-hand side is non-negative and bounded by 1 uniformly in $N$.
Hence, we can use \eqref{kingman_assumption_4} to replace its second factor with $c_N(\xi, \ell, j; \tau_N( \xi, \ell, j; t_1 ) )$ without changing the $N \to \infty$ limit.
Moreover, given $\bfX$ and $G_{ \tau_N( \xi, \ell, j; t_1 ) }^{ N, n } = ( \eta, \ell' )$, the past and future of the genealogical process  from time $\tau_N( \xi, \ell, j; t_1 )$ are conditionally independent as can be seen, for example, by a simple D-separation argument \cite[Chapter 3]{pearl:1988}.
Hence, noting that dominated convergence allows us to use \eqref{kingman_assumption_4} inside an expected value,
\begin{align*}
&\sum_{ \ell' \in [ N ]_d^{ | \eta | } } \bbE[ \bbP^{ \bfX }( \bar{ G }_{ \tau_N( \eta, \ell', \tau_N( \xi, \ell, j; t_1 ); t_2 - t_1 ) }^{ N, n } = \eta, G_{ \tau_N( \xi, \ell, j; t_1 ) }^{ N, n } = ( \eta, \ell' ),  \bar{ G }_{ \tau_N( \xi, \ell, j; t_1 ) - 1 }^{ N, n } = \xi | G_j^{ N, n } = ( \xi, \ell ) ) ]  \\
&\sim \bbE\Bigg[ \sum_{ \ell' \in [ N ]_d^{ | \eta | } } \bbP^{ \bfX }( \bar{ G }_{ \tau_N( \xi, \ell, j; t_1 ) - 1 }^{ N, n } = \xi | G_j^{ N, n } = ( \xi, \ell ) ) c_N( \xi, \ell, j; \tau_N( \xi, \ell, j; t_1 ) ) \\
&\phantom{\sim \bbE[}\; \times \bbP^{ \bfX }( G_{ \tau_N( \eta, \ell, j; t_1 ) }^{ N, n } = ( \eta, \ell' ) | \bar{ G }_{ \tau_N( \eta, \ell, j; t_1 ) }^{ N, n } = \eta, \bar{ G }_{ \tau_N( \xi, \ell, j; t_1 ) - 1 }^{ N, n } = \xi, G_j^{ N, n } = ( \xi, \ell ) ) \\
&\phantom{= \bbE\Bigg[} \times \bbP^{ \bfX }( \bar{ G }_{ \tau_N( \eta, \ell', \tau_N( \xi, \ell, j; t_1 ); t_2 - t_1 ) }^{ N, n } = \eta | G_{ \tau_N( \xi, \ell, j; t_1 ) }^{ N, n } = ( \eta, \ell' ) ) \Bigg],
\end{align*}
where $f_N \sim g_N$ means $\lim_{ N \to \infty } f_N / g_N = 1$.
We will now use the Tower Law to condition on $X_{j : \tau_N( \xi, \ell, j; t_1 )}$, along with the fact that all terms on the right-hand side above are $\sigma( X_{j : \tau_N( \xi, \ell, j; t_1 )} )$-measurable except the final one:
\begin{align*}
&\sum_{ \ell' \in [ N ]_d^{ | \eta | } } \bbE[ \bbP^{ \bfX }( \bar{ G }_{ \tau_N( \eta, \ell', \tau_N( \xi, \ell, j; t_1 ); t_2 - t_1 ) }^{ N, n } = \eta, G_{ \tau_N( \xi, \ell, j; t_1 ) }^{ N, n } = ( \eta, \ell' ),  \bar{ G }_{ \tau_N( \xi, \ell, j; t_1 ) - 1 }^{ N, n } = \xi | G_j^{ N, n } = ( \xi, \ell ) ) ]  \\
&\sim \bbE\Bigg[ \sum_{ \ell' \in [ N ]_d^{ | \eta | } } \bbP^{ \bfX }( \bar{ G }_{ \tau_N( \xi, \ell, j; t_1 ) - 1 }^{ N, n } = \xi | G_j^{ N, n } = ( \xi, \ell ) ) c_N( \xi, \ell, j; \tau_N( \xi, \ell, j; t_1 ) ) \\
&\mspace{45mu} \times \bbP^{ \bfX }( G_{ \tau_N( \eta, \ell, j; t_1 ) }^{ N, n } = ( \eta, \ell' ) | \bar{ G }_{ \tau_N( \eta, \ell, j; t_1 ) }^{ N, n } = \eta, \bar{ G }_{ \tau_N( \xi, \ell, j; t_1 ) - 1 }^{ N, n } = \xi, G_j^{ N, n } = ( \xi, \ell ) ) \\
&\mspace{45mu} \times \bbE[ \bbP^{ \bfX }( \bar{ G }_{ \tau_N( \eta, \ell', \tau_N( \xi, \ell, j; t_1 ); t_2 - t_1 ) }^{ N, n } = \eta | G_{ \tau_N( \xi, \ell, j; t_1 ) }^{ N, n } = ( \eta, \ell' ) ) | X_{ j : \tau_N( \xi, \ell, j; t_1 ) } ] \Bigg] \\
&\sim \bbE\Bigg[ \sum_{ \ell' \in [ N ]_d^{ | \eta | } } \bbP^{ \bfX }( \bar{ G }_{ \tau_N( \xi, \ell, j; t_1 ) - 1 }^{ N, n } = \xi | G_j^{ N, n } = ( \xi, \ell ) ) c_N( \xi, \ell, j; \tau_N( \xi, \ell, j; t_1 ) ) \\
&\mspace{45mu} \times \bbP^{ \bfX }( G_{ \tau_N( \eta, \ell, j; t_1 ) }^{ N, n } = ( \eta, \ell' ) | \bar{ G }_{ \tau_N( \eta, \ell, j; t_1 ) }^{ N, n } = \eta, \bar{ G }_{ \tau_N( \xi, \ell, j; t_1 ) - 1 }^{ N, n } = \xi, G_j^{ N, n } = ( \xi, \ell ) ) \\
&\mspace{45mu} \times \bbE[ \bbP^{ \bfX }( \bar{ G }_{ \tau_N( \eta, \ell', \tau_N( \xi, \ell, j; t_1 ); t_2 - t_1 ) }^{ N, n } = \eta | G_{ \tau_N( \xi, \ell, j; t_1 ) }^{ N, n } = ( \eta, \ell' ) ) | X_{ \tau_N( \xi, \ell, j; t_1 ) } ] \Bigg],
\end{align*}
where the last equality again follows by conditional independence.
In Part 1, we have already shown that 
\begin{equation*}
\bbE[ \bbP^{ \bfX }( \bar{ G }_{ \tau_N( \eta, \ell', \tau_N( \xi, \ell, j; t_1 ); t_2 - t_1 ) }^{ N, n } = \eta | G_{ \tau_N( \xi, \ell, j; t_1 ) }^{ N, n } = ( \eta, \ell' ) ) | X_{ \tau_N( \xi, \ell, j; t_1 ) } ] \to \exp\Bigg( - \binom{ | \eta | }{ 2 } ( t_2 - t_1 ) \Bigg)
\end{equation*}
as $N \to \infty$.
Hence, using the Dominated Convergence Theorem to interchange expectations and limits as needed,
\begin{align*}
&\sum_{ \ell' \in [ N ]_d^{ | \eta | } } \bbE[ \bbP^{ \bfX }( \bar{ G }_{ \tau_N( \eta, \ell', \tau_N( \xi, \ell, j; t_1 ); t_2 - t_1 ) }^{ N, n } = \eta, G_{ \tau_N( \xi, \ell, j; t_1 ) }^{ N, n } = ( \eta, \ell' ),  \bar{ G }_{ \tau_N( \xi, \ell, j; t_1 ) - 1 }^{ N, n } = \xi | G_j^{ N, n } = ( \xi, \ell ) ) ]  \\
&\sim \bbE\Bigg[ \bbP^{ \bfX }( \bar{ G }_{ \tau_N( \xi, \ell, j; t_1 ) - 1 }^{ N, n } = \xi | G_j^{ N, n } = ( \xi, \ell ) ) c_N( \xi, \ell, j; \tau_N( \xi, \ell, j; t_1 ) ) \exp\Bigg( - \binom{ | \eta | }{ 2 } ( t_2 - t_1 ) \Bigg) \\
&\mspace{45mu} \times \sum_{ \ell' \in [ N ]_d^{ | \eta | } } \bbP^{ \bfX }( G_{ \tau_N( \eta, \ell, j; t_1 ) }^{ N, n } = ( \eta, \ell' ) | \bar{ G }_{ \tau_N( \eta, \ell, j; t_1 ) }^{ N, n } = \eta, \bar{ G }_{ \tau_N( \xi, \ell, j; t_1 ) - 1 }^{ N, n } = \xi, G_j^{ N, n } = ( \xi, \ell ) ) \Bigg] \\
&= \exp\Bigg( - \binom{ | \eta | }{ 2 } ( t_2 - t_1 ) \Bigg)  \bbE[ \bbP^{ \bfX }( \bar{ G }_{ \tau_N( \xi, \ell, j; t_1 ) - 1 }^{ N, n } = \xi | G_j^{ N, n } = ( \xi, \ell ) ) c_N( \xi, \ell, j; \tau_N( \xi, \ell, j; t_1 ) ) ].
\end{align*}
The $N \to \infty$ convergence 
\begin{equation*}
\bbE[ \bbP^{ \bfX }( \bar{ G }_{ \tau_N( \xi, \ell, j; t_1 ) - 1 }^{ N, n } = \xi | G_j^{ N, n } = ( \xi, \ell ) ) c_N( \xi, \ell, j; \tau_N( \xi, \ell, j; t_1 ) ) ] \to \exp\Bigg( - \binom{ |\xi|}{2} t_1 \Bigg)
\end{equation*}
follows via the argument in Part 1 of the proof.
Iterating this argument yields convergence of finite-dimensional distributions for any fixed number of initial lineages.

Since the marginal ancestral process $( \bar{ G }_{ \tau_N( \xi, \ell, j; t ) - 1 }^{ N, n } )_{t \geq 0}$ undergoes at most $n - 1$ jumps, its finite-dimensional distributions are determined by the holding times between those jumps and the law of which lineages merge in each jump.
Those have been fully characterised in Parts 1--3, showing convergence to the Kingman coalescent in the sense of finite-dimensional distributions.
Part 4 below will complete the proof of weak convergence in the Skorokhod $J_1$ topology.

\textbf{Part 4.} Finally, we control the modulus of continuity to prove weak convergence, using an argument which follows very similarly to \cite[Proof of Theorem 3.1]{mohle1999}.

By \cite[Corollary 7.8, Chapter 3]{ethier/kurtz:1986}, we require relative compactness of the family of genealogical processes indexed by $N$.
To that end, we define the modulus of continuity,
\begin{equation*}
w( \bar{ G }_{ \tau_N( \cdot ) }^{ N, n }, \delta, t ) := \inf_{ | z_i - z_{ i - 1 } | > \delta } \max_i \sup_{ u, v \in [ z_{ i - 1 }, z_i ) } \mathds{ 1 }_{ \{ \bar{ G }_{ \tau_N( u ) }^{ N, n } \neq \bar{ G }_{ \tau_N( v ) }^{ N, n } \} },
\end{equation*}
for $\delta > 0$, $t > 0$, and $0 = z_0 < z_1 < \ldots < z_{ k - 1 } < t < z_j$ for some finite $j$, which exists because of the minimal separation $\delta$.
Since the state space of $\bar{ G }_{ \tau_N( t ) }^{ N, n }$ is finite, by \cite[Corollary 7.4, Chapter 3]{ethier/kurtz:1986} it suffices to show that for every $\eta \in ( 0, 1 )$ and $t > 0$, there exists $\delta > 0$ such that
\begin{equation}\label{eq:modulus_of_continuity}
\liminf_{ N \to \infty } \bbP( w( \bar{ G }_{ \tau_N( \cdot ) }^{ N, n }, \delta, t ) < \eta ) > 1 - \eta.
\end{equation}
This can be done by noticing that $\bar{ G }_{ \tau_N( \cdot ) }^{ N, n }$ jumps at most $n - 1$ times, and we have already shown that its holding times between jumps converge weakly to independent, exponentially distributed random variables (see \eqref{exponential_upper}, \eqref{exponential_lower} in Part 1).

Let $( Z_1, \ldots, Z_{ n - 1 } )$ denote the limiting holding times between jumps, i.e.\ the entries are independent and $Z_j \sim \text{Exp}\big( \binom{ n - j + 1 }{ 2 } \big)$.
They can be seen as the $N \to \infty$ limits of $( T_n, \ldots T_2 )$ introduced above Theorem \ref{thm:main_result}.
All these jumps of the genealogical process are separated by time windows of width $\delta > 0$ with probability 
\begin{equation*}
\prod_{ m = 1 }^{ n - 1 } \bbP( Z_m > \delta ) = \exp\Bigg( - \sum_{ m = 1 }^{ n - 1 } \binom{ n - m + 1 }{ 2 } \delta \Bigg),
\end{equation*}
and this event implies that the modulus of continuity vanishes.
Thus
\begin{equation*}
\liminf_{ N \to \infty } \bbP( w( \bar{ G }_{ \tau_N( \cdot ) }^{ N, n }, \delta, t ) = 0 ) = \exp\Bigg( - \sum_{ m = 1 }^{ n - 1 } \binom{ n - m + 1 }{ 2 } \delta \Bigg),
\end{equation*}
and \eqref{eq:modulus_of_continuity} holds by choosing $\delta \leq - \log( 1 - \eta ) / \sum_{ m = 1 }^{ n - 1 } \binom{ n - m + 1 }{ 2 }$.
\end{proof}

\section{Results for particular resampling schemes}\label{section:resampling_schemes}

In this section we demonstrate that the assumptions of Theorem \ref{thm:main_result} hold for practical resampling schemes under verifiable conditions.
The assumptions we require amount to the so-called \emph{strong mixing condition}, which is standard in the analysis of SMC methods.
Numerical evidence suggests strong mixing is often an unnecessary assumption, but relaxing it presents considerable technical difficulties.

The specific schemes we cover are multinomial and stratified resampling.
The former is arguably the simplest and most analytically tractable scheme, but suboptimal in practice, while the latter is a prototypical example of a superior low-variance scheme.
We expect that convergence could be proven using similar arguments for many standard schemes, such as residual or systematic resampling.

Proofs in this section are largely technical calculations, often to check the assumptions of Theorem \ref{thm:main_result}.
To aid readability, they have been postponed to Appendix~\ref{app:particular}.

\subsection{Multinomial resampling}

Under multinomial resampling, the ancestor indices $( a_k( 1 ), \ldots, a_k( N ) ) | X_k$ in the forward-in-time particle system are conditionally independent given $X_k$.
Each index $a_k(i)$ is sampled independently from the categorical distribution on $[N]$ with probabilities proportional to potentials,
\begin{equation}\label{eq:forward_weights}
a_k( i ) | X_k \sim \text{Categorical}( g_k( X_k( 1 ) ), \ldots, g_k( X_k( N ) ) ),
\end{equation}
with the probabilities parametrising the categorical distribution suitably normalised.
In reverse time, ancestor indices given particle locations $( X_k, X_{ k - 1 } )$ are also conditionally independent, and
\begin{align}
&a_k( i ) | X_k, X_{ k - 1 } \notag \\
&\sim \text{Categorical}( g_k( X_k( 1 ) ) M_k( X_k( 1 ), X_{ k - 1 }( i ) ), \ldots, g_k( X_k( N ) ) M_k( X_k( N ), X_{ k - 1 }( i ) ) ). \label{eq:reverse_weights}
\end{align}
Conditioning on these two generations of locations renders $a_k$ independent from all other generations in either direction of time.
Hence, the conditional distribution of a single reverse-time ancestral lineage given $\bfX$ coincides with that in the \emph{backward simulation} algorithm \cite{godsilletal:2004}.
The joint distribution of several lineages only differs from backward simulation in that our lineages merge together into a common ancestor when they sample the same ancestor index, while those in backward simulation remain distinct lineages which happen to overlap for one generation.

\begin{prop}\label{prop:multinomial}
Suppose there exists $\gamma \geq 1$ and a function $f : \calX \to ( 0, \infty )$ such that
\begin{equation}\label{eq:strong_mixing}
\frac{ f( y ) }{ \gamma } \leq g_k( x ) M_k( x, y ) \leq \gamma f( y )
\end{equation}
for each $x \in \calX$ and $k \geq 1$.
Then multinomial resampling satisfies the conditions of Theorem \ref{thm:main_result}.
Moreover, the timescale $\tau_N(t)$ satisfies the almost sure bounds
\begin{equation}\label{multinomial_timescale}
\Big\lfloor \frac{N t}{\gamma^4} \Big\rfloor \leq \tau_N(t) \leq \frac{(n + 1) n (n - 1) }{6} \lceil \gamma^4 N t \rceil,
\end{equation}
for any initial sample size $n \leq N$.
\end{prop}

Remark 3 of \cite{koskela2020annals} drew a connection between the coalescence probability $c_N( \xi, \ell, k )$ under multinomial resampling and the effective sample size (ESS) of \cite{kong94}:
\begin{equation}\label{eq:ess}
\text{ESS}( k ) := \frac{\big(\sum_{i = 1}^N g_k( X_k( i ) ) \big)^2}{\sum_{ i = 1 }^N g_k( X_k( i ) )^2}.
\end{equation}
The connection of \cite[Remark 3]{koskela2020annals} is only relevant for coalescence probabilities arising from transition probabilities of the form \eqref{eq:neutral_transition_probability}.
However, Proposition \ref{prop:ess} below shows that a similar relationship is valid for our coalescence probabilities and an effective sample size formula in which the backward simulation weights in \eqref{eq:reverse_weights} play the role of the forward weights \eqref{eq:forward_weights} of the standard SMC algorithm.
In order to state the result, let $L$ have conditional law
\begin{equation}\label{labelling_law}
\bbP^{\mathbf{X}}(L = \cdot | \bar{ G }_{ k - 1 }^{ N, n } = \xi, G_j = ( \xi, \ell ) ) = \bbP^{ \bfX }( G_{ k - 1 }^{ N, n } = ( \xi, \cdot ) | \bar{ G }_{ k - 1 }^{ N, n } = \xi, G_j = ( \xi, \ell ) ),
\end{equation}
and let $\bbE^{ \bfX, \xi, \ell }_{ j, k }$ denote a conditional expectation with respect to it.
\begin{prop} \label{prop:ess}
Under multinomial resampling and \eqref{eq:strong_mixing}, as $N \to \infty$ we have
\begin{equation*}
c_N( \xi, \ell, j; k ) = \frac{ 1 + o(1 ) }{ \binom{ | \xi | }{ 2 } } \sum_{ ( v_1, v_2 ) \in [ | \xi | ]_d^2 } \sum_{ m = 1 }^N \bbE_{ j, k }^{ \bfX, \xi, \ell } \Bigg[ \prod_{ i \in \{ 1, 2 \} } \frac{ g_k( X_k( m ) ) M_k( X_k( m ), X_{ k - 1 }( L^{ v_i } ) ) }{ \sum_{ h = 1 }^N g_k( X_k( h ) ) M_k( X_k( h ), X_{ k - 1 }( L^{ v_i } ) ) } \Bigg].
\end{equation*}
\end{prop}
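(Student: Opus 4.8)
The plan is to average over the random labelling $L$ of the blocks of $\xi$ in generation $k-1$, whose law is \eqref{labelling_law}. By the tower property,
\begin{equation*}
c_N(\xi,\ell,j;k) = \binom{|\xi|}{2}^{-1} \bbE_{j,k}^{\bfX,\xi,\ell}\big[ \bbP^{\bfX}( |\bar{G}_k^{N,n}| < |\xi| \mid G_{k-1}^{N,n} = (\xi, L) ) \big],
\end{equation*}
where the conditioning on $G_j^{N,n} = (\xi,\ell)$ has been dropped from the inner probability because, under multinomial resampling, the ancestor choices $a_k$ depend on $\bfX$ only through $(X_k, X_{k-1})$ and are thus conditionally independent of the genealogy at generations more recent than $k-1$. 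This reduces the task to the merger probability of $|\xi|$ lineages under a \emph{fixed} labelling. Given $\bfX$, the reverse-time parent indices $a_k(L^1), \ldots, a_k(L^{|\xi|})$ are independent and categorical with the backward weights of \eqref{eq:reverse_weights}; write $w_m^{(v)} := g_k(X_k(m)) M_k(X_k(m), X_{k-1}(L^v)) / \sum_{h=1}^N g_k(X_k(h)) M_k(X_k(h), X_{k-1}(L^v))$ for the probability that lineage $v$ selects parent $m$, so that a merger is exactly a coincidence among these indices.

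First I would evaluate the conditional merger probability by complementary counting. Since $\sum_{m=1}^N w_m^{(v)} = 1$ for each $v$, the probability that all $|\xi|$ lineages pick distinct parents is $\sum_{(m_1,\ldots,m_{|\xi|}) \in [N]_d^{|\xi|}} \prod_v w_{m_v}^{(v)}$, and subtracting this from the full product $\prod_v \sum_m w_m^{(v)} = 1$ leaves precisely the sum over index tuples with at least one repeated entry. The dominant contribution comes from tuples in which exactly one pair of lineages $(v_1, v_2)$ shares a parent while the remaining indices are distinct; this reproduces, after summation over pairs of lineages, the cross-coalescence sum $\sum_{(v_1,v_2) \in [|\xi|]_d^2} \sum_m w_m^{(v_1)} w_m^{(v_2)}$ of the statement. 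Re-inserting the average $\bbE_{j,k}^{\bfX,\xi,\ell}[\cdot]$ over $L$ then yields the claimed right-hand side, provided the remaining tuples are shown to be negligible.

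The quantitative control of the remainder is where \eqref{eq:strong_mixing} enters. Applying the two-sided bound with $y = X_{k-1}(L^v)$ to the numerator and to every summand of the denominator gives $\frac{1}{\gamma^2 N} \le w_m^{(v)} \le \frac{\gamma^2}{N}$, uniformly in $m$, $v$, $k$ and in the realisation of $\bfX$. Hence each pairwise term $\sum_m w_m^{(v_1)} w_m^{(v_2)}$ is of order $N^{-1}$, so the leading pairwise sum is $\Theta(N^{-1})$, whereas every tuple carrying two or more coincidences (a second colliding pair, or three lineages sharing a parent) contributes an extra factor of the form $\sum_m w_m^{(\cdot)} w_m^{(\cdot)} = O(N^{-1})$ and is therefore $O(N^{-2})$. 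As $|\xi|$ is fixed there are only finitely many coincidence patterns, so the total correction is $O(N^{-2})$ and the ratio of the exact merger probability to the pairwise leading term is $1 + O(N^{-1})$; since this bound is uniform in $\bfX$, it holds $\bbP$-almost surely and is preserved under the bounded average over $L$ by dominated convergence, producing the multiplicative $1 + o(1)$ factor.

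I expect the main obstacle to be the bookkeeping of this expansion: one must organise the ``at least one repeat'' sum so that the single-pair tuples are cleanly separated from all higher coincidence patterns, and then dominate the latter uniformly. The two-sided weight bound supplied by \eqref{eq:strong_mixing} is exactly what makes each additional coincidence cost a factor $N^{-1}$, so once the combinatorics are arranged the estimate is routine; the genuine subtlety lies in the uniformity over $m$, $v$, $k$ and $\bfX$ needed to upgrade an expectation bound to the almost-sure relative $o(1)$ asserted in the statement.
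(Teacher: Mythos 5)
Your proposal is correct and follows essentially the same route as the paper: averaging over the labelling $L$ via the tower property and conditional independence of generation $k$ from generations $j,\ldots,k-1$ given $\bfX$ and the labels, complementary counting of the event that all $|\xi|$ backward ancestor indices are distinct, and using the uniform bounds $1/(\gamma^2 N) \le w_m^{(v)} \le \gamma^2/N$ implied by \eqref{eq:strong_mixing} to show that every coincidence pattern beyond a single colliding pair contributes $O(N^{-2})$ against a leading pairwise term of order $N^{-1}$. The paper packages exactly this bookkeeping as a union-bound upper bound \eqref{multinomial_ub} plus a Bonferroni-type lower bound (Lemma~\ref{moehle_lb_lemma} with $r=|\xi|-2$) whose remainder terms are dismissed via \eqref{eq:lower_order}, so the only difference is presentational (and note that the ordered-versus-unordered pair sum, $[|\xi|]_d^2$ versus $[|\xi|]_{d,u}^2$, is already used inconsistently between the statement and the paper's own proof, so your gloss inherits rather than introduces that factor-of-two ambiguity).
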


\begin{rem}
Section 4.2 of \cite{brownetal:2021} showed that, under multinomial resampling and the transition probability \eqref{eq:neutral_transition_probability}, genealogical trees of conditional and standard SMC algorithms converge to the same scaling limits under very similar assumptions.
Conditional SMC differs from standard SMC in that one particle has a pre-determined trajectory which is guaranteed to survive all resampling steps with at least one offspring, forming the so-called immortal line.
This modification is essential for correctness of the particle Gibbs algorithm \cite{andrieu2010}, and also finds applications in other areas of Monte Carlo simulation \cite{jacob19, shestopaloff19} and optimisation \cite[Chapter 6]{finke15}.
The key to transferring the convergence proof from standard SMC to the conditional setting was that, under \eqref{eq:strong_mixing}, the probability that any fixed particle is chosen as an ancestor is negligible in the $N \to \infty$ limit.
The same is true in our corrected proof, and hence the presence of the immortal line will have an effect on the genealogy of a finite number of particles with probability tending to zero as $N \to \infty$.
\end{rem}

\subsection{Stratified resampling}

Under stratified resampling, particles are assigned ancestors by decomposing the unit interval into weight-based segments.
Specifically, let $( U_1, U_2, \ldots, U_N )$ be independent, with $U_i \sim U( ( i - 1 ) / N, i / N )$, and let
\begin{equation*}
\bar{ g }_k^{ ( i ) } := \sum_{ m = 1 }^i \frac{ g_k( X_k( m ) ) }{ \sum_{ v = 1 }^N g_k( X_k( v ) ) },
\end{equation*}
with $\bar{ g }_k^{ ( 0 ) } \equiv 0$.
Then $a_k( i ) = m$ if $U_i \in [ \bar{ g }_k^{ ( m - 1 ) }, \bar{ g }_k^{ ( m ) } )$.
The resulting distribution of ancestral indices depends on the ordering of particles, which makes analysis considerably more involved.
Hence, we consider a slight modification in which particle indices are reshuffled uniformly at each iteration, prior to the choice of ancestors.
The same reshuffling was suggested in \cite[page 290]{andrieu2010} to facilitate analysis of particle MCMC methods. 

\begin{prop}\label{prop:stratified}
Suppose there exists $\gamma \geq 1$, a function $f : \calX \to ( 0, \infty )$, and a sequence $\varepsilon_N > 0$ such that
\begin{align}
\frac{ f( y ) }{ \sqrt{ \gamma } } &\leq M_k( x, y ) \leq \sqrt{ \gamma } f( y ), \label{eq:m_mixing} \\
\frac{ 1 }{ \sqrt{ \gamma } } &\leq g_k( x ) \leq \sqrt{ \gamma }, \label{eq:uniform_weight_bound} \\
\lim_{ N \to \infty } &\bbP( \bfX : \max_{ i \in [ N ] }\{ \bar{ g }_k^{ ( i ) } - \bar{g}_k^{ ( i - 1 ) } \} > 1 / N + \varepsilon_N ) = 1, \label{eq:environment}
\end{align}
for each $x \in \calX$ and $k \geq 1$.
Then stratified resampling with uniform shuffling satisfies the conditions of Theorem \ref{thm:main_result}.
Moreover, the timescale $\tau_N(t)$ satisfies
\begin{align}
\Big\lfloor (1 + o(1)) \frac{N t}{\gamma^4} \Big\rfloor &\leq \tau_N(t) \text{ a.s.}, \label{stratified_timescale_lb}\\
\lim_{N \to \infty} \bbP\Bigg( \tau_N(t) \leq \frac{(n + 1) n (n - 1)}{6} \Big\lceil \frac{\gamma^2 N t}{\varepsilon_N} \Big\rceil \Bigg) &\to 1, \label{stratified_timescale_ub}
\end{align}
for any fixed initial sample size $n \leq N$.
\end{prop}

\begin{rem}
The square roots in \eqref{eq:m_mixing} and \eqref{eq:uniform_weight_bound} are superficial.
Noting that \eqref{eq:m_mixing} and \eqref{eq:uniform_weight_bound} imply \eqref{eq:strong_mixing}, the inclusion of the square roots yields bounds that are consistent with \eqref{eq:strong_mixing} using the same constants as in the case of multinomial resampling.
\end{rem}

Condition \eqref{eq:environment} is sufficient to rule out the pathological case where particle weights are too close to equal, resulting in a genealogical process in which mergers never take place.
It could be weakened to requiring $\bar{ g }_k^{ ( i ) } - \bar{g}_k^{ ( i - 1 ) } > 1 / N + \varepsilon_N$ for infinitely many generations, rather than all of them, with no changes to the proof.
The cost of such a relaxation is a more complicated expression in \eqref{stratified_timescale_ub}, which would also need to account for those pre-limiting generations where $\bar{ g }_k^{ ( i ) } - \bar{g}_k^{ ( i - 1 ) } > 1 / N + \varepsilon_N$ is violated and a merger is very unlikely under stratified resampling.

Since \eqref{eq:environment} is a statement about the distribution of particle weights, it is typically difficult to verify.
However, in any given generation it is implied by bounding the effective sample size \eqref{eq:ess} away from $N$.

\begin{prop}\label{prop:environment}
For each $k \geq 1$ and $N \in \mathbb{N}$, if ESS$(k) \leq N (1 - \eta_N)$ for some $\eta_N \in (0, 1)$, then there is an $i \in [N]$ such that $\bar{ g }_k^{ ( i ) } - \bar{g}_k^{ ( i - 1 ) } > 1 / N + \varepsilon_N$ holds for some $\varepsilon_N$ satisfying
\begin{equation*}
\varepsilon_N \geq \frac{1}{N(N - 1)} \sqrt{\frac{\eta_N}{1 - \eta_N}}.
\end{equation*}
\end{prop}

\subsection{Stochastic rounding}

Stochastic roundings are a class of resampling mechanisms for which
\begin{equation*}
\bbP( \nu_k( i ) = m | g_k ) =
\begin{cases}
1 - \frac{ N g_k( X_k( i ) ) }{ \sum_{ v = 1 }^N g_k( X_k( v ) ) } + \Big\lfloor \frac{ N g_k( X_k( i ) ) }{ \sum_{ v = 1 }^N g_k( X_k( v ) ) } \Big\rfloor &\text{if } m = \Big\lfloor \frac{ N g_k( X_k( i ) ) }{ \sum_{ v = 1 }^N g_k( X_k( v ) ) } \Big \rfloor, \\
\frac{ N g_k( X_k( i ) ) }{ \sum_{ v = 1 }^N g_k( X_k( v ) ) } - \Big\lfloor \frac{ N g_k( X_k( i ) ) }{ \sum_{ v = 1 }^N g_k( X_k( v ) ) } \Big\rfloor &\text{if } m = \Big\lfloor \frac{ N g_k( X_k( i ) ) }{ \sum_{ v = 1 }^N g_k( X_k( v ) ) } \Big\rfloor + 1.
\end{cases}
\end{equation*}
There are a number of example resampling mechanisms with these marginals, including systematic resampling \cite{carpenter1999, whitley1994}, the branching system of \cite{crisan1997}, and Srinivasan sampling \cite{gerber2017}.
We conjecture that a proof akin to that of Proposition \ref{prop:stratified} could be produced for at least some of these schemes, particularly systematic resampling which closely resembles its stratified counterpart.
However, each such proof is likely to be a similarly lengthy and technical calculation, and hence we do not pursue them here.

Remark 4.4 of \cite{brownetal:2021} argued that the expected merger rates of all stochastic rounding schemes coincide.
The basis for the argument was an explicit conditional merger probability obtained from \eqref{eq:neutral_transition_probability} which only depended on marginal moments of family sizes.
Since \eqref{eq:neutral_transition_probability} is only valid for systems whose ancestral index vectors are independent in different generations, the scope of \cite[Remark 4.4]{brownetal:2021} must be subject to the same constraint.

\subsection{Ordering of merger probabilities}\label{merger_ordering}

Appendix A of \cite{brownetal:2021} showed that when conditional merger probabilities can be written in the form in \eqref{eq:neutral_transition_probability}, resampling schemes based on stochastic rounding dominate multinomial resampling in that the quantity
\begin{equation}\label{neutral_expected_coalescence_probability}
\frac{ 1 }{ ( N )_2 } \sum_{ i = 1 }^N \bbE^{ \bfX }[ ( \nu_k( i ) )_2 ]
\end{equation}
is always at least as large under the latter as under the former.
The relevance of the result is that timescales for convergence to the Kingman coalescent are typically obtained as generalised inverses of \eqref{neutral_expected_coalescence_probability}, so that a resampling scheme for which \eqref{neutral_expected_coalescence_probability} is higher will exhibit faster coalescence.
The relevance to SMC algorithms is that the speed of coalescence can be thought of as the rate of build-up of path degeneracy arising from distinct lineages merging into common ancestors.

Our timescale is slightly different, obtained as the generalised inverse of coalescence probabilities in \eqref{kingman_timescale}.
Here we present a counterexample which shows that ordering of second factorial moments in \eqref{neutral_expected_coalescence_probability} does not imply ordering of coalescence probabilities in \eqref{kingman_timescale}.
Hence, there does not appear to be a simple way to compare the speed of coalescence between resampling schemes using our timescale.
Our counterexample is between stratified and multinomial resampling, but systematic resampling can be substituted in place of stratified resampling in all of the computations below as well.

Consider $N = 4$ and suppose we are tracking $n = 3$ lineages.
Suppose that the ordered, normalised weights of each of the four available parents in the next generation are $(1 - 3z, z, z, z)$, for $z \in [0, 1/3]$, and that $M(x, y) \propto 1$ so that forward and backward-in-time weights coincide.
Figure \ref{fig:stratified} illustrates these ancestral weights, and how they line up with the four intervals used for allocating children in stratified resampling.
\begin{figure}[!ht]
\centering
	\begin{tikzpicture}
		\draw (0, 2.5) -- (0, 2) -- (1.25, 2) node[above]{1/4} -- (2.5, 2) -- (3.75, 2) node[above]{1/4} -- (5, 2) -- (6.25, 2) node[above]{1/4} -- (7.5, 2) -- (8.75, 2) node[above]{1/4} -- (10, 2) -- (10, 2.5) node[below right]{Stratification};
		\draw(2.5, 2) -- (2.5, 2.5);
		\draw(5, 2) -- (5, 2.5);
		\draw(7.5, 2) -- (7.5, 2.5);
		\draw (0, 1.5) -- (0, 1) -- (4.4, 1) node[above]{$1 - 3z$} -- (8.8, 1) -- (9, 1) node[above]{$z$} -- (9.2, 1) -- (9.4, 1) node[above]{$z$} -- (9.6, 1) -- (9.8, 1) node[above]{$z$} -- (10, 1) -- (10, 1.5) node[below right]{Parent weights};
		\draw(8.8, 1) -- (8.8, 1.5);
		\draw(9.2, 1) -- (9.2, 1.5);
		\draw(9.6, 1) -- (9.6, 1.5);
	\end{tikzpicture}
\caption{The ordered weights of four parents lined up against the four stratification intervals used to implement stratified resampling.}
\label{fig:stratified}
\end{figure}

 Let $\bbP_M, \bbP_S, \bbE_M$, and $\bbE_S$ denote probabilities and expectations under multinomial and stratified resampling, respectively.
Then, given the weights in Figure \ref{fig:stratified} with $z \leq 1/12$ and with $j < k \in \bbN$,
\begin{equation*}
\bbP_S^{ \bfX }( | \bar{ G }_k^{ N, n } | < 3 | | \bar{ G }_{ k - 1 }^{ N, n } | = 3, G_j^{ N, n } ) = 1,
\end{equation*}
because there is no way to allocate the three lineages to the four stratification intervals without at least three merging in the first parent.
This is the case irrespective of whether particles are shuffled prior to resampling.
For multinomial resampling,
\begin{align*}
\bbP_M^{ \bfX }( | \bar{ G }_k^{ N, n } | < 3 | | \bar{ G }_{ k - 1 }^{ N, n } | = 3, G_j^{ N, n } ) &=  ( 1 - 3 z )^3 + 3 z^3 + 3 \binom{ 3 }{ 2 } ( 1 - 3 z )^2 z + 3 \binom{ 3 }{ 2 } z^2 ( 1 - z )\\
&= 1 - 18z^2 + 48z^3 \\
&\leq \bbP_S^{ \bfX }( | \bar{ G }_k^{ N, n } | < 3 | | \bar{ G }_{ k - 1 }^{ N, n } | = 3, G_j^{ N, n } ),
\end{align*}
with equality only if $z = 0$, so that the probability of at least one coalescence is higher under stratified resampling.

The factorial moments in \eqref{neutral_expected_coalescence_probability} satisfy the opposite inequality, in that the ones under multinomial resampling dominate those under stratified resampling.
Using the fact that, for $X \sim \text{Bin}(n, p)$,
\begin{equation*}
\bbE[ ( X )_2 ] = \text{Var}( X ) + \bbE[ X ]^2 - \bbE[ X ] = n p ( 1 - p ) + n^2 p^2 - n p = ( n )_2 p^2,
\end{equation*}
we have
\begin{align}
\bbE_S^{ \bfX }[ ( \nu_k( 1 ) )_2 ] &= ( 4 )_2 \frac{ 1 / 4 - 3 z }{ 1 / 4 } + ( 3 )_2 \frac{ 3 z }{ 1 / 4 } = 12 - 72 z, \label{stratified_1}\\
\bbE_M^{ \bfX }[ ( \nu_k( 1 ) )_2 ] &= ( 4 )_2 ( 1 - 3z )^2 = 12 - 72z + 108z^2, \notag \\
\bbE_S^{ \bfX }[ ( \nu_k( i ) )_2 ] &= 0 \text{ for }  i \in \{ 2, 3, 4 \}, \label{stratified_2}\\
\bbE_M^{ \bfX }[ ( \nu_k( i ) )_2 ] &= (4)_2 z^2 = 12 z^2 \text{ for }  i \in \{ 2, 3, 4 \}. \notag
\end{align}
Hence, factorial moments of family sizes are always larger under multinomial resampling, in line with the result of \cite{brownetal:2021}.

The factorial moment \eqref{stratified_2} remains correct even if particles are shuffled prior to resampling.
In contrast, under shuffled stratified resampling \eqref{stratified_1} is accurate if particle 1 is randomised to position 1 or 4.
If it ends up in either middle position then
\begin{align*}
&\bbE_S^{ \bfX }[ ( \nu_k( 1 ) )_2 | \text{Particle } 1 \text{ shuffled to position } 2 \text{ or } 3 ] \\
&= ( 4 )_2 \frac{ 1 / 4 - z }{ 1 / 4 } \frac{ 1 / 4 - 2 z }{ 1 / 4 } + ( 3 )_2 \Big(1 - \frac{ 1 / 4 - z }{ 1 / 4 } \frac{ 1 / 4 - 2 z }{ 1 / 4 }  - \frac{z}{1/4} \frac{2z}{1/4} \Big)+ ( 2 )_2 \frac{z}{1/4} \frac{2z}{1/4} \\
&= 12 - 72 z + 64 z^2.
\end{align*}
Because all four shuffling positions are equally likely, the shuffled factorial moment is
\begin{equation*}
\bbE_S^{\bfX}[( \nu_k( 1 ) )_2] = 12 - 72 z + 32 z^2,
\end{equation*}
which is intermediate between unshuffled stratified resampling and multinomial resampling.

\appendix
\section{Results for Particular Resampling Schemes} \label{app:particular}

\subsection{Multinomial resampling} \label{app:multinomial}

\begin{proof}[Proof of Proposition \ref{prop:multinomial}]

The conditional merger probability under multinomial resampling is
\begin{align*}
&\bbP^{ \bfX }( | \bar{ G }_k^{ N, n } | < | \xi | | \bar{ G }_{ k - 1 }^{ N, n } = \xi, G_j^{ N, n } = ( \xi, \ell_0 ) ) \\
&= \sum_{ \ell_1 \in [ N ]_d^{ | \xi | } } \bbP^{ \bfX }( | \bar{ G }_k^{ N, n } | < | \xi | | G_{ k - 1 }^{ N, n } = ( \xi, \ell_1 ), G_j^{ N, n } = ( \xi, \ell_0 ) ) \\
&\phantom{= \sum_{ \ell_1 \in [ N ]_d^{ | \xi | } } } \times \bbP^{ \bfX }( G_{ k - 1 }^{ N, n } = ( \xi, \ell_1 ) | \bar{ G }_{ k - 1 }^{ N, n } = \xi, G_j^{ N, n } = ( \xi, \ell_0 ) ) \\
&= \sum_{ \ell_1 \in [ N ]_d^{ | \xi | } } \bbP^{ \bfX }( | \bar{ G }_k^{ N, n } | < | \xi | | G_{ k - 1 }^{ N, n } = ( \xi, \ell_1 ) ) \bbP^{ \bfX }( G_{ k - 1 }^{ N, n } = ( \xi, \ell_1 ) | \bar{ G }_{ k - 1 }^{ N, n } = \xi,  G_j^{ N, n } = ( \xi, \ell_0 ) ),
\end{align*}
using conditional independence of generation $k$ from generations $\{ j, \ldots, k - 1 \}$ given $( \bfX, \ell_1 )$.
Substituting in the categorical probabilities that lineages $\ell_1^{ j_1 }$ and $\ell_1^{ j_2 }$ merge into a common ancestor, we obtain
\begin{align}
&\bbP^{ \bfX }( | \bar{ G }_k^{ N, n } | < | \xi | | \bar{ G }_{ k - 1 }^{ N, n } = \xi, G_j^{ N, n } = ( \xi, \ell_0 ) ) \notag\\
&\leq \sum_{ \ell_1 \in [ N ]_d^{ | \xi | } } \bbP^{ \bfX }( G_{ k - 1 }^{ N, n } = ( \xi, \ell_1 ) | \bar{ G }_{ k - 1 }^{ N, n } = \xi, G_j^{ N, n } = ( \xi, \ell_0 ) ) \notag \\
&\phantom{ = \sum_{ \ell_1 \in [ N ]_d^{ | \xi | } } } \times \sum_{ ( v_1, v_2 ) \in [ | \xi | ]_{ d, u }^2 } \sum_{ m = 1 }^N \prod_{ i \in \{ 1, 2 \} } \frac{ g_k( X_k( m ) ) M_k( X_k( m ), X_{ k - 1 }( \ell_1^{ v_i } ) ) }{ \sum_{ h = 1 }^N g_k( X_k( h ) ) M_k( X_k( h ), X_{ k - 1 }( \ell_1^{ v_i } ) ) }, \label{multinomial_ub}
\end{align}
which is an inequality because the right-hand side counts some mergers many times.
Using \eqref{eq:strong_mixing}, we have
\begin{align}
&\bbP^{ \bfX }( | \bar{ G }_k^{ N, n } | < | \xi | | \bar{ G }_{ k - 1 }^{ N, n } = \xi, G_j^{ N, n } = ( \xi, \ell_0 ) ) \notag \\
&\leq \sum_{ \ell_1 \in [ N ]_d^{ | \xi | } } \bbP^{ \bfX }( G_{ k - 1 }^{ N, n } = ( \xi, \ell_1 ) | \bar{ G }_{ k - 1 }^{ N, n } = \xi, G_j^{ N, n } = ( \xi, \ell_0 ) ) \sum_{ ( v_1, v_2 ) \in [ | \xi | ]_{ d, u }^2 } \sum_{ m = 1 }^N\prod_{ i \in \{ 1, 2 \} } \frac{ \gamma^2 }{ N } \notag \\
&= \binom{ | \xi | }{ 2 } \frac{ \gamma^4 }{ N }, \label{eq:binary_ub}
\end{align}
because the middle step amounts to a sum of the probability mass function $\bbP^{ \bfX }( G_{ k - 1 }^{ N, n } = ( \xi, \cdot ) | \bar{ G }_{ k - 1 }^{ N, n } = \xi, G_j^{ N, n } = ( \xi, \ell_0 ) )$ over its support $\ell_1 \in [ N ]_d^{ | \xi | }$.
Hence, $c_N( \xi, \ell, j; k ) \to 0$ as $N \to \infty$ uniformly in $k \in \bbN$, so that \eqref{kingman_assumption_1} and \eqref{kingman_assumption_2} hold.
Furthermore, \eqref{kingman_assumption_3} holds because \eqref{generalised_inverse_sandwich} and \eqref{eq:binary_ub} imply
\begin{equation*}
\sum_{ k = j + 1 }^{ \tau_N( \xi, \ell, j; t ) } c_N( \xi, \ell, j; k )^2 \leq \frac{ \gamma^4 }{ N } \sum_{ k = j + 1 }^{ \tau_N( \xi, \ell, j; t ) } c_N( \xi, \ell, j; k ) \leq \frac{ \gamma^4 ( t + 1 ) }{ N },
\end{equation*}
noting that the factor of $\binom{|\xi|}{2}$ in \eqref{eq:binary_ub} cancels with that in the definition of $c_N$ in \eqref{kingman_timescale}.

We will obtain \eqref{kingman_assumption_4} by checking a two-part condition which implies it.
The first is that there exists a sequence $C_N^n \to 0$ as $N \to \infty$ such that
\begin{align} 
&\bbP^{ \bfX }( | \bar{ G }_k^{ N, n } | < | \xi | - 1 | \bar{ G }_{ k - 1 }^{ N, n } = \xi, G_j^{ N, n } = ( \xi, \ell ) ) \notag \\
&\leq C_N^n \bbP^{ \bfX }( | \bar{ G }_k^{ N, n } | = | \xi | - 1 | \bar{ G }_{ k - 1 }^{ N, n } = \xi, G_j^{ N, n } = ( \xi, \ell ) ) ), \label{eq:large_mergers_vanish} 
\end{align}
for any sufficiently large $N$ and almost every $\bfX$, where $C_N^n$ does not depend on $k \geq 0$, $\ell \in [ N ]_d^{ | \xi | }$, or $\bfX$.
The second is that, for every $\xi \prec \eta$,
\begin{align}
&\bbP^{ \bfX }( \bar{ G }_{ \tau_N( \xi, \ell, j; t ) }^{ N, n } = \eta | \bar{ G }_{ \tau_N( \xi, \ell, j; t ) - 1 }^{ N, n } = \xi, G_j^{ N, n } = ( \xi, \ell ) ) \notag \\
&\sim \binom{ | \xi | }{ 2 }^{ -1 } \bbP^{ \bfX }( | \bar{ G }_{ \tau_N( \xi, \ell, j; t ) }^{ N, n } | = | \xi | - 1  | \bar{ G }_{ \tau_N( \xi, \ell, j; t ) - 1 }^{ N, n } = \xi, G_j^{ N, n } = ( \xi, \ell ) ), \label{eq:binary_uniformity}
\end{align}
almost surely as $N \to \infty$, uniformly in $t > 0$, $\ell \in [ N ]_d^{ | \xi | }$, $| \xi | \leq n$, and $\bfX$.
When \eqref{eq:large_mergers_vanish} and \eqref{eq:binary_uniformity} hold,
\begin{align*}
&\bbP^{ \bfX }( \bar{ G }_{ \tau_N( \xi, \ell, j; t ) }^{ N, n } = \eta | \bar{ G }_{ \tau_N( \xi, \ell, j; t ) - 1 }^{ N, n } = \xi, G_j^{ N, n } = ( \xi, \ell ) ) \\
&\sim \binom{ | \xi | }{ 2 }^{ -1 } \bbP^{ \bfX }( | \bar{ G }_{ \tau_N( \xi, \ell, j; t ) }^{ N, n } | = | \xi | - 1  | \bar{ G }_{ \tau_N( \xi, \ell, j; t ) - 1 }^{ N, n } = \xi, G_j^{ N, n } = ( \xi, \ell ) ) \\
&\sim \binom{ | \xi | }{ 2 }^{ -1 } [ \bbP^{ \bfX }( | \bar{ G }_{ \tau_N( \xi, \ell, j; t ) }^{ N, n } | = | \xi | - 1 | \bar{ G }_{ \tau_N( \xi, \ell, j; t ) - 1 }^{ N, n } = \xi, G_j^{ N, n } = ( \xi, \ell ) ) \\
&\phantom{= \binom{ | \xi | }{ 2 }^{ -1 } [}+ \bbP^{ \bfX }( | \bar{ G }_{ \tau_N( \xi, \ell, j; t ) }^{ N, n } | < | \xi | - 1 | \bar{ G }_{ \tau_N( \xi, \ell, j; t ) - 1 }^{ N, n } = \xi, G_j^{ N, n } = ( \xi, \ell ) ) ] \\
&= \binom{ | \xi | }{ 2 }^{ -1 } \bbP^{ \bfX }( | \bar{ G }_{ \tau_N( \xi, \ell, j; t ) }^{ N, n } | < | \xi | | \bar{ G }_{ \tau_N( \xi, \ell, j; t ) - 1 }^{ N, n } = \xi, G_j^{ N, n } = ( \xi, \ell ) ) \\
&= c_N( \xi, \ell, j; \tau_N( \xi, \ell, j; t ) ),
\end{align*}
so that \eqref{kingman_assumption_4} holds as well.

We begin by establishing \eqref{eq:large_mergers_vanish}. 
In order for the number of lineages to decrease by more than one in a single time-step, there must be at least one merger with three (or more) lineages, or at least two mergers involving pairs of lineages.
The corresponding conditional probability given $\bfX$ can be bounded above:
\begin{align*}
&\bbP^{ \bfX }( | \bar{ G }_k^{ N, n } | < | \xi | - 1 | \bar{ G }_{ k - 1 }^{ N, n } = \xi, G_j^{ N, n } = ( \xi, \ell ) ) \\
&= \sum_{ \ell_1 \in [ N ]_d^{ | \xi | } } \bbP^{ \bfX }( | \bar{ G }_k^{ N, n } | < | \xi | - 1 | G_{ k - 1 }^{ N, n } = ( \xi, \ell_1 ) ) \bbP^{ \bfX }( G_{ k - 1 }^{ N, n } = ( \xi, \ell_1 ) | \bar{ G }_{ k - 1 }^{ N, n } = \xi,  G_j^{ N, n } = ( \xi, \ell_0 ) ) \\
&\leq \sum_{ \ell_1 \in [ N ]_d^{ | \xi | } } \bbP^{ \bfX }( G_{ k - 1 }^{ N, n } = ( \xi, \ell_1 ) | \bar{ G }_{ k - 1 }^{ N, n } = \xi, G_j^{ N, n } = ( \xi, \ell_0 ) ) \\
&\phantom{ \leq } \times \Bigg[ \sum_{ ( v_1, v_2, v_3 ) \in [ | \xi | ]_{ d, u }^3 } \sum_{ m = 1 }^N \prod_{ i = 1 }^3 \frac{ g_k( X_k( m ) ) M_k( X_k( m ), X_{ k - 1 }( \ell_1^{ v_i } ) ) }{ \sum_{ h = 1 }^N g_k( X_k( h ) ) M_k( X_k( h ), X_{ k - 1 }( \ell_1^{ v_i } ) ) } \\
&\phantom{ \leq \times \Bigg[ } + \frac{ 1 }{ 8 } \sum_{ ( v_1, \ldots, v_4 ) \in [ | \xi | ]_d^4 } \prod_{ u \in \{ 0, 2 \} } \Bigg( \sum_{ m = 1 }^N \prod_{ i = 1 }^2 \frac{ g_k( X_k( m ) ) M_k( X_k( m ), X_{ k - 1 }( \ell_1^{ v_{ i + u } } ) ) }{ \sum_{ h = 1 }^N g_k( X_k( h ) ) M_k( X_k( h ), X_{ k - 1 }( \ell_1^{ v_{ i + u } } ) ) } \Bigg) \Bigg],
\end{align*}
where the $1/8$ on the last line compensates for the four orderings of $( v_1, v_2, v_3, v_4)$ which leave the $\{ v_1, v_2 \}$ and $\{ v_3, v_4 \}$ mergers unchanged, as well as the two further orderings of the two parent indices.
Using \eqref{eq:strong_mixing}, we have
\begin{align}
&\bbP^{ \bfX }( | \bar{ G }_k^{ N, n } | < | \xi | - 1 | \bar{ G }_{ k - 1 }^{ N, n } = \xi, G_j^{ N, n } = ( \xi, \ell ) ) \notag \\
&\leq \sum_{ \ell_1 \in [ N ]_d^{ | \xi | } } \bbP^{ \bfX }( G_{ k - 1 }^{ N, n } = ( \xi, \ell_1 ) | \bar{ G }_{ k - 1 }^{ N, n } = \xi, G_j^{ N, n } = ( \xi, \ell_0 ) ) \notag \\
&\phantom{ \leq } \times \Bigg[ \frac{ ( | \xi | - 2 ) \gamma^2 }{ 3 N } \sum_{ ( v_1, v_2 ) \in [ | \xi | ]_{ d, u }^2 } \sum_{ m = 1 }^N \prod_{ i = 1 }^2 \frac{ g_k( X_k( m ) ) M_k( X_k( m ), X_{ k - 1 }( \ell_1^{ v_i } ) ) }{ \sum_{ h = 1 }^N g_k( X_k( h ) ) M_k( X_k( h ), X_{ k - 1 }( \ell_1^{ v_i } ) ) } \notag \\
&\phantom{ \leq \times } + \frac{ ( | \xi | - 2 ) ( | \xi | - 3 ) \gamma^4 }{ 4 N }  \sum_{ ( v_1, v_2 ) \in [ | \xi | ]_{ d, u }^2 } \sum_{ m = 1 }^N \prod_{ i = 1 }^2 \frac{ g_k( X_k( m ) ) M_k( X_k( m ), X_{ k - 1 }( \ell_1^{ v_i } ) ) }{ \sum_{ h = 1 }^N g_k( X_k( h ) ) M_k( X_k( h ), X_{ k - 1 }( \ell_1^{ v_i } ) ) } \Bigg] \notag \\
&\leq \frac{ ( | \xi | - 2 ) \gamma^2 }{ N } \Bigg[ \frac{ 1 }{ 3 } + \frac{ ( | \xi | - 3 ) \gamma^2 }{ 4 } \Bigg]  \sum_{ \ell_1 \in [ N ]_d^{ | \xi | } } \bbP^{ \bfX }( G_{ k - 1 }^{ N, n } = ( \xi, \ell_1 ) | \bar{ G }_{ k - 1 }^{ N, n } = \xi, G_j^{ N, n } = ( \xi, \ell_0 ) ) \notag \\
&\phantom{ \leq \frac{ ( | \xi | - 2 ) \gamma^2 }{ N } } \times \sum_{ ( v_1, v_2 ) \in [ | \xi | ]_{ d, u }^2 } \sum_{ m = 1 }^N \prod_{ i = 1 }^2 \frac{ g_k( X_k( m ) ) M_k( X_k( m ), X_{ k - 1 }( \ell_1^{ v_i } ) ) }{ \sum_{ h = 1 }^N g_k( X_k( h ) ) M_k( X_k( h ), X_{ k - 1 }( \ell_1^{ v_i } ) ) }, \label{eq:multinomial_big_mergers}
\end{align}
where the $1/3$ compensates for the three ways to choose an index among $(v_1, v_2, v_3) \in [|\xi|]_{d, u}^2$, and the $1/4 = 2/8$ for the two ways to choose a family among $\{v_1, v_2\}$ and $\{v_3, v_4\}$.
Still by \eqref{eq:strong_mixing},
\begin{align*}
\frac{ g_k( X_k( m ) ) M_k( X_k( m ), X_{ k - 1 }( \ell_1^{ v_i } ) ) }{ \sum_{ h = 1 }^N g_k( X_k( h ) ) M_k( X_k( h ), X_{ k - 1 }( \ell_1^{ v_i } ) ) } \leq \gamma^4 \frac{ g_k( X_k( m ) ) M_k( X_k( m ), X_{ k - 1 }( \ell_1^i ) ) }{ \sum_{ h = 1 }^N g_k( X_k( h ) ) M_k( X_k( h ), X_{ k - 1 }( \ell_1^i ) ) }.
\end{align*}
for any $v_i, i \in [ | \xi | ]$.
Applying this bound to both factors in \eqref{eq:multinomial_big_mergers} yields
\begin{align}
&\bbP^{ \bfX }( | \bar{ G }_k^{ N, n } | < | \xi | - 1 | \bar{ G }_{ k - 1 }^{ N, n } = \xi, G_j^{ N, n } = ( \xi, \ell ) ) \notag \\
&\leq \frac{ \gamma^{ 10 } }{ N } \Bigg[ \binom{ | \xi | }{ 3 }+ \binom{ | \xi | }{ 2 } \binom{  | \xi | - 2 }{ 2 } \frac{ \gamma^2 }{ 2 } \Bigg]  \sum_{ \ell_1 \in [ N ]_d^{ | \xi | } } \bbP^{ \bfX }( G_{ k - 1 }^{ N, n } = ( \xi, \ell_1 ) | \bar{ G }_{ k - 1 }^{ N, n } = \xi, G_j^{ N, n } = ( \xi, \ell_0 ) ) \notag \\
&\phantom{ \leq \sum_{ \ell_1 \in [ N ]_d^{ | \xi | } } } \times \sum_{ m = 1 }^N \prod_{ i = 1 }^2 \frac{ g_k( X_k( m ) ) M_k( X_k( m ), X_{ k - 1 }( \ell_1^i ) ) }{ \sum_{ h = 1 }^N g_k( X_k( h ) ) M_k( X_k( h ), X_{ k - 1 }( \ell_1^i ) ) } \notag \\
&\leq \frac{ \gamma^{ 10 } }{ N } \Bigg[ \binom{ | \xi | }{ 3 }+ \binom{ | \xi | }{ 2 } \binom{  | \xi | - 2 }{ 2 } \frac{ \gamma^2 }{ 2 } \Bigg] \bbP^{ \bfX }( | \bar{ G }_k^{ N, n } | < | \xi | | \bar{ G }_{ k - 1 }^{ N, n } = \xi, G_j^{ N, n } = ( \xi, \ell ) ), \label{eq:lower_order}
\end{align}
where the final inequality holds because the factor on the second line of the middle step is the probability that lineages $\ell_1^1$ and $\ell_1^2$ merge.
Hence, \eqref{eq:large_mergers_vanish} holds.

To see that \eqref{eq:binary_uniformity} holds, note that \eqref{eq:strong_mixing} implies that the conditional law of the ancestor index of a lineage can be written as
\begin{equation*}
\bbP^{ \bfX }( a_k( i ) = m ) = \frac{ g_k( X_k( m ) ) M_k( X_k( m ), X_{ k - 1 }( i ) ) }{ \sum_{ h = 1 }^N g_k( X_k( h ) ) M_k( X_k( h ), X_{ k - 1 }( i ) ) } = \frac{ 1 }{ \gamma^2 N } + \frac{ \gamma^2 - 1 }{ \gamma^2 } r( i, m ),
\end{equation*}
where $r( i, \cdot )$ is a probability mass function on $[ N ]$.
Hence, a lineage can sample its ancestor by first flipping a Bernoulli($1 / \gamma^2$)-distributed coin.
If the flip succeeds, the ancestor is sampled uniformly, while if the flip fails, the ancestor is sampled from the remainder mass function $r( i, \cdot )$.
The number of generations until $| \xi |$ simultaneous successes is geometrically distributed with parameter $1 / \gamma^{ 2 | \xi | }$.
Let $\Xi$ denote the event that there is at least one such generation strictly between the initial generation $j$ and generation $\tau_N( \xi, \ell, j; t )$.
By \eqref{eq:binary_ub}, we have that
\begin{equation*}
\tau_N( \xi, \ell, j; t ) \geq \lfloor  N t / \gamma^4 \rfloor,
\end{equation*}
almost surely.
Then, as $N \to \infty$,
\begin{equation*}
\bbP^{ \bfX }( \Xi ) \geq 1 - \Bigg(1 - \frac{ 1 }{ \gamma^{ 2 | \xi | } } \Bigg)^{ \lfloor N t / \gamma^4 \rfloor - j - 1 } \to 1.
\end{equation*}
For $k \in \{j + 1, \ldots, \tau_N(\xi, \ell, j; t) - 1\}$, let $\Xi_k$ denote the event that generation $k$ is the first one in which $|\xi|$ simultaneous successes happen.
On the event $\Xi$, the left-hand side of \eqref{eq:binary_uniformity} can be decomposed into contributions from each $\Xi_k$ as
\begin{align}
&\bbP^{ \bfX }( \bar{ G }_{ \tau_N( \xi, \ell, j; t ) }^{ N, n } = \eta | \bar{ G }_{ \tau_N( \xi, \ell, j; t ) - 1 }^{ N, n } = \xi, G_j^{ N, n } = ( \xi, \ell ), \Xi ) \notag \\
&= \sum_{k = j + 1}^{\tau_N(\xi, \ell, j; t) - 1} \sum_{\ell' \in [N]_d^{|\xi|}} \bbP^{\bfX}(\bar{ G }_{ \tau_N( \xi, \ell, j; t ) }^{ N, n } = \eta | \bar{ G }_{ \tau_N( \xi, \ell, j; t ) - 1 }^{ N, n } = \xi, G_k^{N, n} = (\xi, \ell'), \Xi_k) \notag \\
&\mspace{166mu} \times \bbP^{\bfX}(\Xi_k, G_k^{ N, n } = ( \xi, \ell' ) | \bar{ G }_{ \tau_N( \xi, \ell, j; t ) - 1 }^{ N, n } = \xi, G_j^{ N, n } = ( \xi, \ell ), \Xi) \notag \\
&= \sum_{k = j + 1}^{\tau_N(\xi, \ell, j; t) - 1} \sum_{\ell' \in [N]_d^{|\xi|}} \bbP^{\bfX}(\bar{ G }_{ \tau_N( \xi, \ell, j; t ) }^{ N, n } = \eta | \bar{ G }_{ \tau_N( \xi, \ell, j; t ) - 1 }^{ N, n } = \xi, G_k^{N, n} = (\xi, \ell'), \Xi_k) \notag \\
&\mspace{166mu} \times \frac{\bbP^{\bfX}(\bar{ G }_{ \tau_N( \xi, \ell, j; t ) - 1 }^{ N, n } = \xi | G_k^{ N, n } = ( \xi, \ell' ), \Xi_k)}{\bbP^{\bfX}(\bar{ G }_{ \tau_N( \xi, \ell, j; t ) - 1 }^{ N, n } = \xi | G_j^{ N, n } = ( \xi, \ell ), \Xi)} \notag \\
&\mspace{166mu} \times \bbP^{\bfX}(\Xi_k, G_k^{ N, n } = ( \xi, \ell' ) | G_j^{ N, n } = ( \xi, \ell ), \Xi) \notag \\
&= \sum_{k = j + 1}^{\tau_N(\xi, \ell, j; t) - 1} \sum_{\ell' \in [N]_d^{|\xi|}} \frac{\bbP^{\bfX}(\bar{ G }_{ \tau_N( \xi, \ell, j; t ) }^{ N, n } = \eta, \bar{ G }_{ \tau_N( \xi, \ell, j; t ) - 1 }^{ N, n } = \xi | G_k^{N, n} = (\xi, \ell'), \Xi_k)}{\bbP^{\bfX}(\bar{ G }_{ \tau_N( \xi, \ell, j; t ) - 1 }^{ N, n } = \xi | G_j^{ N, n } = ( \xi, \ell ), \Xi)}\notag \\
&\mspace{166mu} \times \frac{(1 - \gamma^{-2 |\xi|})^{k - j - 1} \gamma^{-2|\xi|}}{N^{|\xi|} (1 - (1 - \gamma^{-2 |\xi|})^{\tau_N(\xi, \ell, j;t) - j - 1})}, \label{eq:label_mixing}
\end{align}
where the first equality follows from conditioning on $G_k^{N, n}$ and $\Xi_k$, the Markov property, and the fact that $\Xi_k \subset \Xi$.
The second equality follows from Bayes' rule, and the third substitutes in the geometric mass functions for $\Xi_k$ and $\Xi$, as well as $N^{-|\xi|}$ as the probability of choosing the $|\xi|$ parents with labels $\ell'$ in generation $k$ uniformly at random.
To see that the right-hand side is independent of $\eta$, note that given $\bfX$, the probability of any particular merger depends only on the block labels in the generation prior to the merger.
Since \eqref{eq:label_mixing} is averaged over uniformly distributed generation-$k$ particle labels $\ell'$, and hence also all particle labels between generations $k + 1, \ldots, \tau_N(\xi, \ell, j; t) - 1$, the dependence on $\eta$ in the numerator is superficial: the right-hand side is invariant to replacing $\eta$ with any other $\eta' \neq \eta : \xi \prec \eta'$.
Thus the left-hand side of \eqref{eq:binary_uniformity} does not depend on $\eta$ on an event with probability converging to one as $N \to \infty$, and hence \eqref{eq:binary_uniformity} holds.

To check \eqref{timescale_assumption} we will obtain a lower bound on $c_N( \xi, \ell, j; k )$.
By considering the probability that two specific particles merge and using \eqref{eq:strong_mixing},
\begin{align}
&\binom{ | \xi | }{ 2 }^{ -1 } \bbP^{ \bfX }( | \bar{ G }_k^{ N, n } | < | \xi | | \bar{ G }_{ k - 1 }^{ N, n } = \xi, G_j^{ N, n } = ( \xi, \ell_0 ) ) \notag \\
&\geq \binom{ | \xi | }{ 2 }^{ -1 } \sum_{ \ell_1 \in [ N ]_d^{ | \xi | } } \bbP^{ \bfX }( G_{ k - 1 }^{ N, n } = ( \xi, \ell_1 ) | \bar{ G }_{ k - 1 }^{ N, n } = \xi, G_j^{ N, n } = ( \xi, \ell_0 ) ) \notag \\
&\phantom{ = \binom{ | \xi | }{ 2 }^{ -1 } \sum_{ \ell_1 \in [ N ]_d^{ | \xi | } } } \times \sum_{ m = 1 }^N \prod_{ i \in \{ 1, 2 \} } \frac{ g_k( X_k( m ) ) M_k( X_k( m ), X_{ k - 1 }( \ell_1^i ) ) }{ \sum_{ h = 1 }^N g_k( X_k( h ) ) M_k( X_k( h ), X_{ k - 1 }( \ell_1^i ) ) } \label{multinomial_lb} \\
&\geq \binom{ | \xi | }{ 2 }^{ -1 } \sum_{ \ell_1 \in [ N ]_d^{ | \xi | } } \bbP^{ \bfX }( G_{ k - 1 }^{ N, n } = ( \xi, \ell_1 ) | \bar{ G }_{ k - 1 }^{ N, n } = \xi, G_j^{ N, n } = ( \xi, \ell_0 ) ) \sum_{ m = 1 }^N \frac{ 1 }{ N^2 \gamma^4 } \notag \\
&= \binom{ | \xi | }{ 2 }^{ -1 } \frac{ 1 }{ N \gamma^4 }. \notag
\end{align}
Hence, $c_N( \xi, \ell, j; k )$ is bounded away from zero for each $N$ uniformly in $\ell$, $j$, and $k$, which means $\tau_N( \xi, \ell, j; t )$ has an almost surely finite upper bound and $\bbP( \tau_N( \xi, \ell, j; t ) = \infty ) = 0$ for any $t \in ( 0, \infty )$.

Finally, the bounds on the timescale $\tau_N(t)$ in \eqref{multinomial_timescale} follow immediately from \eqref{eq:binary_ub} and \eqref{multinomial_lb}, as well as the fact that
\begin{equation}\label{binomial_sum}
\sum_{j = 2}^n \binom{j}{2} = \frac{(n + 1) n (n - 1)}{6}.
\end{equation}
\end{proof}

\begin{proof}[Proof of Proposition \ref{prop:ess}]

By \eqref{multinomial_ub} and with the random vector $L$ defined as in \eqref{labelling_law}, we have
\begin{align}
&c_N( \xi, \ell, j; k ) \label{prop3.2_ub} \\
&\leq \frac{1}{\binom{ | \xi | }{ 2 }} \sum_{ \ell_1 \in [ N ]_d^{ | \xi | } } \bbP^{ \bfX }( G_{ k - 1 }^{ N, n } = ( \xi, \ell_1 ) | \bar{ G }_{ k - 1 }^{ N, n } = \xi, G_j^{ N, n } = ( \xi, \ell ) ) \notag \\
&\phantom{ \leq \frac{1}{\binom{ | \xi | }{ 2 }} \sum_{ \ell_1 \in [ N ]_d^{ | \xi | } } } \times \sum_{ ( v_1, v_2 ) \in [ | \xi | ]_{ d, u }^2 }\sum_{ m = 1 }^N \prod_{ i \in \{ 1, 2 \} } \frac{ g_k( X_k( m ) ) M_k( X_k( m ), X_{ k - 1 }( \ell_1^{ v_i } ) ) }{ \sum_{ h = 1 }^N g_k( X_k( h ) ) M_k( X_k( h ), X_{ k - 1 }( \ell_1^{ v_i } ) ) } \notag \\
&= \frac{1}{\binom{ | \xi | }{ 2 }} \sum_{ ( v_1, v_2 ) \in [ | \xi | ]_{ d, u }^2 } \sum_{ m = 1 }^N \bbE_{ j, k }^{ \bfX, \xi, \ell } \Bigg[ \prod_{ i \in \{ 1, 2 \} } \frac{ g_k( X_k( m ) ) M_k( X_k( m ), X_{ k - 1 }( L^{ v_i } ) ) }{ \sum_{ h = 1 }^N g_k( X_k( h ) ) M_k( X_k( h ), X_{ k - 1 }( L^{ v_i } ) ) } \Bigg], \notag
\end{align}
so that the right-hand side matches the statement of Proposition \ref{prop:ess}.

In the other direction, \eqref{multinomial_lb} yields the same expression as a lower bound when $| \xi | = 2$, but for general $\xi$ we require a sharper bound.
To that end,
\begin{align*}
& c_N( \xi, \ell, j; k ) \\
&= \frac{ 1 }{ \binom{ | \xi | }{ 2 } } \bbE_{ j, k }^{ \bfX, \xi, \ell }\Bigg[ 1 -  \sum_{ ( m_1, \ldots, m_{ | \xi | } ) \in [ N ]_d^{ | \xi | } } \prod_{ i = 1 }^{ | \xi | } \frac{ g_k( X_k( m_i ) ) M_k( X_k( m_i ), X_{ k - 1 }( L^i ) ) }{ \sum_{ h = 1 }^N g_k( X_k( h ) ) M_k( X_k( h ), X_{ k - 1 }( L^i ) ) } \Bigg] \\
&\geq \frac{ 1 }{ \binom{ | \xi | }{ 2 } } \bbE_{ j, k }^{ \bfX, \xi, \ell }\Bigg[ 1  - \Bigg( 1 - \sum_{ ( v_1, v_2 ) \in [ | \xi | ]_{ d, u }^2 } \sum_{ m = 1 }^N \Bigg[\prod_{ i \in \{ 1, 2 \} } \frac{ g_k( X_k( m ) ) M_k( X_k( m ), X_{ k - 1 }( L^{ v_i } ) ) }{ \sum_{ h = 1 }^N g_k( X_k( h ) ) M_k( X_k( h ), X_{ k - 1 }( L^{ v_i } ) ) } \Bigg] \\
&\phantom{\geq  \frac{ 1 }{ \binom{ | \xi | }{ 2 } } } \times \sum_{ \substack{ ( m_i : i \in [ | \xi | ] \setminus \{ v_1, v_2 \} ) \in [ N ]_d^{ | \xi | - 2 } \\ \text{every } m_i \neq m } } \prod_{ i \in [ | \xi | ] \setminus \{ v_1, v_2 \} } \frac{ g_k( X_k( m_i ) ) M_k( X_k( m_i ), X_{ k - 1 }( L^i ) ) }{ \sum_{ h = 1 }^N g_k( X_k( h ) ) M_k( X_k( h ), X_{ k - 1 }( L^i ) ) } \Bigg) \Bigg].
\end{align*}
The sum on the last line can be bounded below by using Lemma \ref{moehle_lb_lemma} in Appendix~\ref{app:lemmas} with $r = | \xi | - 2$ and 
\begin{equation*}
a( m, \ell ) = \frac{ g_k( X_k( m ) ) M_k( X_k( m ), X_{ k - 1 }( \ell ) ) }{ \sum_{ h = 1 }^N g_k( X_k( h ) ) M_k( X_k( h ), X_{ k - 1 }( \ell ) ) },
\end{equation*}
which yields
\begin{align}
&c_N( \xi, \ell, j; k ) \label{prop3.2_lb} \\
&\geq \frac{ 1 }{ \binom{ | \xi | }{ 2 } } \sum_{ ( v_1, v_2 ) \in [ | \xi | ]_{ d, u }^2 } \sum_{ m = 1 }^N \bbE_{ j, k }^{ \bfX, \xi, \ell }\Bigg[\prod_{ i \in \{ 1, 2 \} } \frac{ g_k( X_k( m ) ) M_k( X_k( m ), X_{ k - 1 }( L^{ v_i } ) ) }{ \sum_{ h = 1 }^N g_k( X_k( h ) ) M_k( X_k( h ), X_{ k - 1 }( L^{ v_i } ) ) } \Bigg] \notag \\
&\phantom{\geq}- \frac{ 1 }{ \binom{ | \xi | }{ 2 } } \sum_{ ( v_1, v_2, v_3 ) \in [ | \xi | ]_{ d, u }^3 } \sum_{ m = 1 }^N \bbE_{ j, k }^{ \bfX, \xi, \ell }\Bigg[\prod_{ i \in \{ 1, 2, 3 \} } \frac{ g_k( X_k( m ) ) M_k( X_k( m ), X_{ k - 1 }( L^{ v_i } ) ) }{ \sum_{ h = 1 }^N g_k( X_k( h ) ) M_k( X_k( h ), X_{ k - 1 }( L^{ v_i } ) ) } \Bigg] \notag \\
&\phantom{\geq}- \frac{ 1 }{ \binom{ | \xi | }{ 2 } } \sum_{ ( v_1, \ldots, v_4 ) \in [ | \xi | ]_d^4 } \frac{1}{8}  \bbE_{ j, k }^{ \bfX, \xi, \ell }\Bigg[ \prod_{ u \in \{ 0, 2 \} } \sum_{ m = 1 }^N \prod_{ i \in \{ 1, 2 \} } \frac{ g_k( X_k( m ) ) M_k( X_k( m ), X_{ k - 1 }( L^{ v_{ i + u } } ) ) }{ \sum_{ h = 1 }^N g_k( X_k( h ) ) M_k( X_k( h ), X_{ k - 1 }( L^{ v_{ i + u } } ) ) } \Bigg], \notag
\end{align}
where the factor of $1/8$ on the last line compensates for the fact that four ordered 4-tuples $( v_1, v_2, v_3, v_4 )$ correspond to the same double-merger between $\{ v_1, v_2\}$ and $\{ v_3, v_4 \}$, and that there are also two corresponding orderings of the parent indices $m$.
By \eqref{eq:strong_mixing},
\begin{align}
&\bbE_{ j, k }^{ \bfX, \xi, \ell }\Bigg[\prod_{ i \in \{ 1, 2, 3 \} } \frac{ g_k( X_k( m ) ) M_k( X_k( m ), X_{ k - 1 }( L^{ v_i } ) ) }{ \sum_{ h = 1 }^N g_k( X_k( h ) ) M_k( X_k( h ), X_{ k - 1 }( L^{ v_i } ) ) } \Bigg] \notag \\
&\leq \frac{\gamma^2}{N} \bbE_{ j, k }^{ \bfX, \xi, \ell }\Bigg[\prod_{ i \in \{ 1, 2 \} } \frac{ g_k( X_k( m ) ) M_k( X_k( m ), X_{ k - 1 }( L^{ v_i } ) ) }{ \sum_{ h = 1 }^N g_k( X_k( h ) ) M_k( X_k( h ), X_{ k - 1 }( L^{ v_i } ) ) } \Bigg], \label{lot_1} \\
&\bbE_{ j, k }^{ \bfX, \xi, \ell }\Bigg[ \prod_{ u \in \{ 0, 2 \} } \sum_{ m = 1 }^N \prod_{ i \in \{ 1, 2 \} } \frac{ g_k( X_k( m ) ) M_k( X_k( m ), X_{ k - 1 }( L^{ v_{ i + u } } ) ) }{ \sum_{ h = 1 }^N g_k( X_k( h ) ) M_k( X_k( h ), X_{ k - 1 }( L^{ v_{ i + u } } ) ) } \Bigg] \notag \\
&\leq \frac{\gamma^2}{N} \sum_{ m = 1 }^N \bbE_{ j, k }^{ \bfX, \xi, \ell }\Bigg[ \prod_{ i \in \{ 1, 2 \} } \frac{ g_k( X_k( m ) ) M_k( X_k( m ), X_{ k - 1 }( L^{ v_i } ) ) }{ \sum_{ h = 1 }^N g_k( X_k( h ) ) M_k( X_k( h ), X_{ k - 1 }( L^{ v_i } ) ) } \Bigg]. \label{lot_2}
\end{align}
Substituting \eqref{lot_1} and \eqref{lot_2} into the lower bound \eqref{prop3.2_lb} shows that it matches the upper bound in \eqref{prop3.2_ub} in the $N \to \infty$ limit, completing the proof.
\end{proof}

\subsection{Stratified resampling}\label{app:stratified}

\begin{proof}[Proof of Proposition \ref{prop:stratified}]
Under stratified resampling, the conditional probability given $\bfX$ that particle $\ell \in [N]$ in generation $k - 1$ has parent $m \in [N]$ in generation $k$ is
\begin{equation*}
\frac{ | [ \frac{ \ell - 1 }{ N }, \frac{ \ell }{ N } ) \cap [ \bar{ g }_k^{ ( m - 1 ) }, \bar{ g }_k^{ ( m ) } ) | M_k( X_k( m ), X_{ k - 1 }( \ell ) ) }{ \sum_{ h = 1 }^N | [ \frac{ \ell - 1 }{ N }, \frac{ \ell }{ N } ) \cap [ \bar{ g }_k^{ ( h - 1 ) }, \bar{ g }_k^{ ( h ) } ) | M_k( X_k( h ), X_{ k - 1 }( \ell ) ) }.
\end{equation*}
Thus, the conditional probability of at least one merger in generation $k$ among $| \xi |$ lineages can be bounded above by conditioning on the uniformly sampled indices $( h_1, \ldots, h_{ | \xi | } )$ to which they are shuffled, and multiplying by the probability that at least two of those indices obtain the same parent in the stratified resampling scheme:
\begin{align*}
&\bbP^{ \bfX }( | \bar{ G }_k^{ N, n } | < | \xi | | \bar{ G }_{ k - 1 }^{ N, n } = \xi, G_j^{ N, n } = ( \xi, \ell_0 ) ) \\
&\leq \sum_{ \ell_1 \in [ N ]_d^{ | \xi | } } \bbP^{ \bfX }( G_{ k - 1 }^{ N, n } = ( \xi, \ell_1 ) | \bar{ G }_{ k - 1 }^{ N, n } = \xi, G_j^{ N, n } = ( \xi, \ell_0 ) ) \sum_{ ( h_1, \ldots, h_{ | \xi | } ) \in [ N ]_d^{ | \xi | } }  \frac{ 1 }{ ( N )_{ | \xi | } } \\
&\phantom{=} \times \sum_{ ( v_1, v_2 ) \in [ | \xi | ]_{ d, u }^2 } \sum_{ m = 1 }^N \prod_{ i \in \{ 1, 2 \} } \frac{ | [ \frac{ h_{ v_i } - 1 }{ N }, \frac{ h_{ v_i } }{ N } ) \cap [ \bar{ g }_k^{ ( m - 1 ) }, \bar{ g }_k^{ ( m ) } ) | M_k( X_k( m ), X_{ k - 1 }( \ell_1^{ v_i } ) ) }{ \sum_{ r = 1 }^N | [ \frac{ h_{ v_i } - 1 }{ N }, \frac{ h_{ v_i } }{ N } ) \cap [ \bar{ g }_k^{ ( r - 1 ) }, \bar{ g }_k^{ ( r ) } ) | M_k( X_k( r ), X_{ k - 1 }( \ell_1^{ v_i } ) ) } \\
&= \sum_{ \ell_1 \in [ N ]_d^{ | \xi | } } \bbP^{ \bfX }( G_{ k - 1 }^{ N, n } = ( \xi, \ell_1 ) | \bar{ G }_{ k - 1 }^{ N, n } = \xi, G_0^{ N, n } = ( \xi, \ell_0 ) ) \sum_{ ( v_1, v_2 ) \in [ | \xi | ]_{ d, u }^2 } \sum_{ ( h_{ v_1 }, h_{ v_2 } ) \in [ N ]_d^2 } \\
&\phantom{=} \frac{ 1 }{ ( N )_2 } \sum_{ m = 1 }^N \prod_{ i \in \{ 1, 2 \} } \frac{ | [ \frac{ h_{ v_i } - 1 }{ N }, \frac{ h_{ v_i } }{ N } ) \cap [ \bar{ g }_k^{ ( m - 1 ) }, \bar{ g }_k^{ ( m ) } ) | M_k( X_k( m ), X_{ k - 1 }( \ell_1^{ v_i } ) ) }{ \sum_{ r = 1 }^N | [ \frac{ h_{ v_i } - 1 }{ N }, \frac{ h_{ v_i } }{ N } ) \cap [ \bar{ g }_k^{ ( r - 1 ) }, \bar{ g }_k^{ ( r ) } ) | M_k( X_k( r ), X_{ k - 1 }( \ell_1^{ v_i } ) ) },
\end{align*}
where the inequality arises because the right-hand side overcounts some mergers.
Applying \eqref{eq:m_mixing} to the last line, we obtain
\begin{align}
&\bbP^{ \bfX }( | \bar{ G }_k^{ N, n } | < | \xi | | \bar{ G }_{ k - 1 }^{ N, n } = \xi, G_j^{ N, n } = ( \xi, \ell_0 ) ) \notag \\
&\leq \frac{ \gamma^2 }{ ( N )_2 } \sum_{ \ell_1 \in [ N ]_d^{ | \xi | } } \bbP^{ \bfX }( G_{ k - 1 }^{ N, n } = ( \xi, \ell_1 ) | \bar{ G }_{ k - 1 }^{ N, n } = \xi, G_j^{ N, n } = ( \xi, \ell_0 ) ) \notag \\
&\phantom{= \sum_{ \ell_1 \in [ N ]_d^{ | \xi | } } } \times \sum_{ ( v_1, v_2 ) \in [ | \xi | ]_{ d, u }^2 } \sum_{ ( h_{ v_1 }, h_{ v_2 } ) \in [ N ]_d^2 } \sum_{ m = 1 }^N \prod_{ i \in \{ 1, 2 \} } \frac{ | [ \frac{ h_{ v_i } - 1 }{ N }, \frac{ h_{ v_i } }{ N } ) \cap [ \bar{ g }_k^{ ( m - 1 ) }, \bar{ g }_k^{ ( m ) } ) | }{ \sum_{ r = 1 }^N | [ \frac{ h_{ v_i } - 1 }{ N }, \frac{ h_{ v_i } }{ N } ) \cap [ \bar{ g }_k^{ ( r - 1 ) }, \bar{ g }_k^{ ( r ) } ) | } \notag \\
&\leq \frac{ \gamma^2 }{ ( N )_2 } \binom{ | \xi | }{ 2 } \sum_{ \ell_1 \in [ N ]_d^{ | \xi | } } \bbP^{ \bfX }( G_{ k - 1 }^{ N, n } = ( \xi, \ell_1 ) | \bar{ G }_{ k - 1 }^{ N, n } = \xi, G_j^{ N, n } = ( \xi, \ell_0 ) ) \notag \\
&\phantom{\leq \frac{ \gamma^4 }{ ( N )_2 } \binom{ | \xi | }{ 2 } \sum_{ \ell_1 \in [ N ]_d^{ | \xi | } } } \times \sum_{ ( h_1, h_2 ) \in [ N ]_d^2 } \sum_{ m = 1 }^N \prod_{ i \in \{ 1, 2 \} } \frac{ | [ \frac{ h_i - 1 }{ N }, \frac{ h_i }{ N } ) \cap [ \bar{ g }_k^{ ( m - 1 ) }, \bar{ g }_k^{ ( m ) } ) | }{ \sum_{ r = 1 }^N | [ \frac{ h_i - 1 }{ N }, \frac{ h_i }{ N } ) \cap [ \bar{ g }_k^{ ( r - 1 ) }, \bar{ g }_k^{ ( r ) } ) | }. \label{sum_rearrangement}
\end{align}
For fixed $h_i$, the sum in the denominator on the last line simplifies to
\begin{equation}\label{eq:stratified_denominator}
\sum_{ r = 1 }^N  \Big| \Big[ \frac{ h_i - 1 }{ N }, \frac{ h_i }{ N } \Big) \cap [ \bar{ g }_k^{ ( r - 1 ) }, \bar{ g }_k^{ ( r ) } ) \Big| = \Big| \Big[ \frac{ h_i - 1 }{ N }, \frac{ h_i }{ N } \Big) \Big| = \frac{ 1 }{ N },
\end{equation}
so that
\begin{align}
&\bbP^{ \bfX }( | \bar{ G }_k^{ N, n } | < | \xi | | \bar{ G }_{ k - 1 }^{ N, n } = \xi, G_j^{ N, n } = ( \xi, \ell_0 ) ) \notag \\
&\leq \sum_{ \ell_1 \in [ N ]_d^{ | \xi | } } \bbP^{ \bfX }( G_{ k - 1 }^{ N, n } = ( \xi, \ell_1 ) | \bar{ G }_{ k - 1 }^{ N, n } = \xi, G_j^{ N, n } = ( \xi, \ell_0 ) ) \notag \\
&\phantom{ \leq \sum_{ \ell_1 \in [ N ]_d^{ | \xi | } } } \times \frac{ N \gamma^2 }{ N - 1 } \binom{ | \xi | }{ 2 } \sum_{ ( h_1, h_2 ) \in [ N ]_d^2 } \sum_{ m = 1 }^N \prod_{ i \in \{ 1, 2 \} } \Big| \Big[ \frac{ h_i - 1 }{ N }, \frac{ h_i }{ N } \Big) \cap [ \bar{ g }_k^{ ( m - 1 ) }, \bar{ g }_k^{ ( m ) } ) \Big|. \label{eq:stratified_ub_1}
\end{align}
For a fixed $m$, we have the bound
\begin{align}
&\sum_{ ( h_1, h_2 ) \in [ N ]_d^2 } \prod_{ i \in \{ 1, 2 \} } \Big| \Big[ \frac{ h_i - 1 }{ N }, \frac{ h_i }{ N } \Big) \cap [ \bar{ g }_k^{ ( m - 1 ) }, \bar{ g }_k^{ ( m ) } ) \Big| \notag \\
&\leq \Bigg( \sum_{ h = 1 }^N \Big| \Big[ \frac{ h - 1 }{ N }, \frac{ h }{ N } \Big) \cap [ \bar{ g }_k^{ ( m - 1 ) }, \bar{ g }_k^{ ( m ) } ) \Big| \Bigg)^2 = | [ \bar{ g }_k^{ ( m - 1 ) }, \bar{ g }_k^{ ( m ) } ) |^2 \leq \frac{ \gamma^2 }{ N^2 }, \label{eq:stratified_numerator}
\end{align}
where the final inequality follows from \eqref{eq:uniform_weight_bound}.
Substituting back into \eqref{eq:stratified_ub_1},
\begin{align}
&\bbP^{ \bfX }( | \bar{ G }_k^{ N, n } | < | \xi | | \bar{ G }_{ k - 1 }^{ N, n } = \xi, G_j^{ N, n } = ( \xi, \ell_0 ) ) \notag \\
&\leq \sum_{ \ell_1 \in [ N ]_d^{ | \xi | } } \bbP^{ \bfX }( G_{ k - 1 }^{ N, n } = ( \xi, \ell_1 ) | \bar{ G }_{ k - 1 }^{ N, n } = \xi, G_j^{ N, n } = ( \xi, \ell_0 ) ) \frac{ N \gamma^2 }{ N - 1 } \frac{ \gamma^2 }{ N } \binom{ | \xi | }{ 2 } \notag \\
&= \frac{ \gamma^4 ( 1 + o( 1 ) ) }{ N } \binom{ | \xi | }{ 2 }, \label{eq:stratified_merger_ub}
\end{align}
where the sum over $[ N ]_d^{ | \xi | }$ evaluates to one because it adds up a conditional probability mass function over its support.
Similarly to the multinomial case, this upper bound on the binary merger probability means that \eqref{kingman_assumption_1}, \eqref{kingman_assumption_2}, and \eqref{kingman_assumption_3} hold.

As with multinomial resampling, \eqref{eq:large_mergers_vanish} and \eqref{eq:binary_uniformity} are a sufficient condition for \eqref{kingman_assumption_4}.
To show that \eqref{eq:large_mergers_vanish} holds, we write the probability of the number of blocks decreasing by more than one as
\begin{align*}
&\bbP^{ \bfX }( | \bar{ G }_k^{ N, n } | < | \xi | - 1 | \bar{ G }_{ k - 1 }^{ N, n } = \xi, G_j^{ N, n } = ( \xi, \ell ) ) \\
&= \sum_{ \ell_1 \in [ N ]_d^{ | \xi | } } \bbP^{ \bfX }( | \bar{ G }_k^{ N, n } | < | \xi | - 1 | G_{ k - 1 }^{ N, n } = ( \xi, \ell_1 ) )  \bbP^{ \bfX }( G_{ k - 1 }^{ N, n } = ( \xi, \ell_1 ) | \bar{ G }_{ k - 1 }^{ N, n } = \xi, G_j^{ N, n } = ( \xi, \ell_0 ) ) \\
&\leq \sum_{ \ell_1 \in [ N ]_d^{ | \xi | } } \bbP^{ \bfX }( G_{ k - 1 }^{ N, n } = ( \xi, \ell_1 ) | \bar{ G }_{ k - 1 }^{ N, n } = \xi, G_j^{ N, n } = ( \xi, \ell_0 ) )  \sum_{ ( h_1, \ldots, h_{ | \xi | } ) \in [ N ]_d^{ | \xi | } }  \frac{ 1 }{ ( N )_{ | \xi | } } \\
&\phantom{\leq} \times \sum_{ ( v_1, v_2 ) \in [ | \xi | ]_{ d, u }^2 } \sum_{ m = 1 }^N \Bigg( \prod_{ i = 1 }^2 \frac{ | [ \frac{ h_{ v_i } - 1 }{ N }, \frac{ h_{ v_i } }{ N } ) \cap [ \bar{ g }_k^{ ( m - 1 ) }, \bar{ g }_k^{ ( m ) } ) | M_k( X_k( m ), X_{ k - 1 }( \ell_1^{ v_i } ) ) }{ \sum_{ r = 1 }^N | [ \frac{ h_{ v_i } - 1 }{ N }, \frac{ h_{ v_i } }{ N } ) \cap [ \bar{ g }_k^{ ( r - 1 ) }, \bar{ g }_k^{ ( r ) } ) | M_k( X_k( r ), X_{ k - 1 }( \ell_1^{ v_i } ) ) } \Bigg) \\
&\phantom{\leq} \times \Bigg[ \frac{ 1 }{ 3 } \sum_{ v_3 \neq v_1, v_2 } \frac{ | [ \frac{ h_{ v_3 } - 1 }{ N }, \frac{ h_{ v_3 } }{ N } ) \cap [ \bar{ g }_k^{ ( m - 1 ) }, \bar{ g }_k^{ ( m ) } ) | M_k( X_k( m ), X_{ k - 1 }( \ell_1^{ v_3 } ) ) }{ \sum_{ r = 1 }^N | [ \frac{ h_{ v_3 } - 1 }{ N }, \frac{ h_{ v_3 } }{ N } ) \cap [ \bar{ g }_k^{ ( r - 1 ) }, \bar{ g }_k^{ ( r ) } ) | M_k( X_k( r ), X_{ k - 1 }( \ell_1^{ v_3 } ) ) } + \frac{ 1 }{ 2 } \\
&\phantom{\times \Bigg[} \times \sum_{ \substack{ ( v_3, v_4 ) \in [ | \xi | ]_{ d, u }^2 \\ \text{both } \neq v_1, v_2 } } \sum_{ m' \neq m } \prod_{ i = 3 }^4 \frac{ | [ \frac{ h_{ v_i } - 1 }{ N }, \frac{ h_{ v_i } }{ N } ) \cap [ \bar{ g }_k^{ ( m' - 1 ) }, \bar{ g }_k^{ ( m' ) } ) | M_k( X_k( m' ), X_{ k - 1 }( \ell_1^{ v_i } ) ) }{ \sum_{ r = 1 }^N | [ \frac{ h_{ v_i } - 1 }{ N }, \frac{ h_{ v_i } }{ N } ) \cap [ \bar{ g }_k^{ ( r - 1 ) }, \bar{ g }_k^{ ( r ) } ) | M_k( X_k( r ), X_{ k - 1 }( \ell_1^{ v_i } ) ) } \Bigg],
\end{align*}
where the right-hand side is an upper bound on the probabilities of a merger of three lineages, or two simultaneous mergers of a pair of lineages each.
All larger mergers must contain at least one of these events.
Applying \eqref{eq:m_mixing} and \eqref{eq:stratified_denominator} to the terms in square brackets and simplifying the sum over $( h_1, \ldots, h_{ | \xi | } )$ similarly to \eqref{sum_rearrangement}, 
\begin{align}
&\bbP^{ \bfX }( | \bar{ G }_k^{ N, n } | < | \xi | - 1 | \bar{ G }_{ k - 1 }^{ N, n } = \xi, G_j^{ N, n } = ( \xi, \ell ) ) \notag \\
&\leq \sum_{ \ell_1 \in [ N ]_d^{ | \xi | } } \bbP^{ \bfX }( G_{ k - 1 }^{ N, n } = ( \xi, \ell_1 ) | \bar{ G }_{ k - 1 }^{ N, n } = \xi, G_j^{ N, n } = ( \xi, \ell_0 ) ) \sum_{ ( v_1, v_2 ) \in [ | \xi | ]_{ d, u }^2 } \frac{ 1 }{ ( N )_2 } \notag \\
&\phantom{\leq} \times \sum_{ ( h_1, h_2 ) \in [ N ]_d^2 } \sum_{ m = 1 }^N \Bigg( \prod_{ i = 1 }^2 \frac{ | [ \frac{ h_i - 1 }{ N }, \frac{ h_i }{ N } ) \cap [ \bar{ g }_k^{ ( m - 1 ) }, \bar{ g }_k^{ ( m ) } ) | M_k( X_k( m ), X_{ k - 1 }( \ell_1^{ v_i } ) ) }{ \sum_{ r = 1 }^N | [ \frac{ h_i - 1 }{ N }, \frac{ h_i }{ N } ) \cap [ \bar{ g }_k^{ ( r - 1 ) }, \bar{ g }_k^{ ( r ) } ) | M_k( X_k( r ), X_{ k - 1 }( \ell_1^{ v_i } ) ) } \Bigg) \notag \\
&\phantom{\leq \times } \times \Bigg[ \frac{ N \gamma  ( | \xi | - 2 ) }{ 3 ( N - 2 ) } \sum_{ h_3 \neq h_1, h_2 } \Big| \Big[ \frac{ h_3 - 1 }{ N }, \frac{ h_3 }{ N } \Big) \cap [ \bar{ g }_k^{ ( m - 1 ) }, \bar{ g }_k^{ ( m ) } ) \Big| \notag \\
&\phantom{\leq \times \times \Bigg[} + \frac{ ( N \gamma )^2 }{ 2 ( N - 2 )_2 } \binom{ | \xi | - 2 }{ 2 }  \sum_{ \substack{ ( h_3, h_4 ) \in [ N ]_d^2 \\ \text{both } \neq h_1, h_2 } } \sum_{ m' \neq m } \prod_{ i = 3 }^4 \Big| \Big[ \frac{ h_i - 1 }{ N }, \frac{ h_i }{ N } \Big) \cap [ \bar{ g }_k^{ ( m' - 1 ) }, \bar{ g }_k^{ ( m' ) } ) \Big| \Bigg]. \label{eq:stratified_multiple_merger_bound}
\end{align}
Similarly to \eqref{eq:stratified_numerator}, we have the bounds
\begin{align*}
\sum_{ h_3 \neq h_1, h_2 } \Big| \Big[ \frac{ h_3 - 1 }{ N }, \frac{ h_3 }{ N } \Big) \cap [ \bar{ g }_k^{ ( m - 1 ) }, \bar{ g }_k^{ ( m ) } ) \Big| &\leq\frac{ \gamma }{ N }, \\
\sum_{ \substack{ ( h_3, h_4 ) \in [ N ]_d^2 \\ \text{both } \neq h_1, h_2 } } \sum_{ m' \neq m } \prod_{ i = 3 }^4 \Big| \Big[ \frac{ h_i - 1 }{ N }, \frac{ h_i }{ N } \Big) \cap [ \bar{ g }_k^{ ( m' - 1 ) }, \bar{ g }_k^{ ( m' ) } ) \Big| &\leq \frac{ \gamma^2 }{ N }.
\end{align*}
Substituting these into \eqref{eq:stratified_multiple_merger_bound} yields
\begin{align*}
&\bbP^{ \bfX }( | \bar{ G }_k^{ N, n } | < | \xi | - 1 | \bar{ G }_{ k - 1 }^{ N, n } = \xi, G_j^{ N, n } = ( \xi, \ell ) ) \\
&\leq \sum_{ \ell_1 \in [ N ]_d^{ | \xi | } } \bbP^{ \bfX }( G_{ k - 1 }^{ N, n } = ( \xi, \ell_1 ) | \bar{ G }_{ k - 1 }^{ N, n } = \xi, G_j^{ N, n } = ( \xi, \ell_0 ) ) \frac{ \gamma^2 }{ ( N )_3 } \Bigg[ \frac{ | \xi | - 2 }{ 3 } + \frac{ N \gamma^2 }{ 2 ( N - 3 ) } \binom{ | \xi | - 2 }{ 2 } \Bigg]\\
&\phantom{\leq} \times  \sum_{ ( v_1, v_2 ) \in [ | \xi | ]_{ d, u }^2 } \sum_{ ( h_1, h_2 ) \in [ N ]_d^2 } \sum_{ m = 1 }^N  \prod_{ i = 1 }^2 \frac{ | [ \frac{ h_i - 1 }{ N }, \frac{ h_i }{ N } ) \cap [ \bar{ g }_k^{ ( m - 1 ) }, \bar{ g }_k^{ ( m ) } ) | M_k( X_k( m ), X_{ k - 1 }( \ell_1^{ v_i } ) ) }{ \sum_{ r = 1 }^N | [ \frac{ h_i - 1 }{ N }, \frac{ h_i }{ N } ) \cap [ \bar{ g }_k^{ ( r - 1 ) }, \bar{ g }_k^{ ( r ) } ) | M_k( X_k( r ), X_{ k - 1 }( \ell_1^{ v_i } ) ) }.
\end{align*}
To obtain the required bound, the right-hand side must be bounded above by the product of $\bbP^{ \bfX }( | \bar{ G }_k^{ N, n } | < | \xi | | \bar{ G }_{ k - 1 }^{ N, n } = \xi, G_j^{ N, n } = ( \xi, \ell ) )$ and a factor tending to zero as $N \to \infty$.
Currently, the sum over $( v_1, v_2 ) \in [ | \xi | ]_{ d, u }^2$ overcounts mergers involving more than two lineages.
By \eqref{eq:m_mixing}, 
\begin{align*}
&\frac{ | [ \frac{ h_i - 1 }{ N }, \frac{ h_i }{ N } ) \cap [ \bar{ g }_k^{ ( m - 1 ) }, \bar{ g }_k^{ ( m ) } ) | M_k( X_k( m ), X_{ k - 1 }( \ell_1^{ v_i } ) ) }{ \sum_{ r = 1 }^N | [ \frac{ h_i - 1 }{ N }, \frac{ h_i }{ N } ) \cap [ \bar{ g }_k^{ ( r - 1 ) }, \bar{ g }_k^{ ( r ) } ) | M_k( X_k( r ), X_{ k - 1 }( \ell_1^{ v_i } ) ) } \\
&\leq \gamma^2 \frac{ | [ \frac{ h_i - 1 }{ N }, \frac{ h_i }{ N } ) \cap [ \bar{ g }_k^{ ( m - 1 ) }, \bar{ g }_k^{ ( m ) } ) | M_k( X_k( m ), X_{ k - 1 }( \ell_1^i ) ) }{ \sum_{ r = 1 }^N | [ \frac{ h_i - 1 }{ N }, \frac{ h_i }{ N } ) \cap [ \bar{ g }_k^{ ( r - 1 ) }, \bar{ g }_k^{ ( r ) } ) | M_k( X_k( r ), X_{ k - 1 }( \ell_1^i ) ) }
\end{align*}
for $v_i, i \in [ | \xi | ]$, so that
\begin{align*}
&\bbP^{ \bfX }( | \bar{ G }_k^{ N, n } | < | \xi | - 1 | \bar{ G }_{ k - 1 }^{ N, n } = \xi, G_j^{ N, n } = ( \xi, \ell ) ) \\
&\leq \sum_{ \ell_1 \in [ N ]_d^{ | \xi | } } \bbP^{ \bfX }( G_{ k - 1 }^{ N, n } = ( \xi, \ell_1 ) | \bar{ G }_{ k - 1 }^{ N, n } = \xi, G_j^{ N, n } = ( \xi, \ell_0 ) )  \frac{ \gamma^6 ( | \xi | - 2 ) }{ N - 2 } \Bigg[ \frac{ 1 }{ 3 } + \frac{ N \gamma^2 ( | \xi | - 3 ) }{ 4 ( N - 3 ) } \Bigg] \binom{ | \xi | }{ 2 } \\
&\phantom{\leq} \times \frac{ 1 }{ ( N )_2 } \sum_{ ( h_1, h_2 ) \in [ N ]_d^2 } \sum_{ m = 1 }^N \prod_{ i = 1 }^2 \frac{ | [ \frac{ h_i - 1 }{ N }, \frac{ h_i }{ N } ) \cap [ \bar{ g }_k^{ ( m - 1 ) }, \bar{ g }_k^{ ( m ) } ) | M_k( X_k( m ), X_{ k - 1 }( \ell_1^i ) ) }{ \sum_{ r = 1 }^N | [ \frac{ h_i - 1 }{ N }, \frac{ h_i }{ N } ) \cap [ \bar{ g }_k^{ ( r - 1 ) }, \bar{ g }_k^{ ( r ) } ) | M_k( X_k( r ), X_{ k - 1 }( \ell_1^i ) ) }.
\end{align*}
Now the third line on the right-hand side is the probability that lineages $\ell_1^1$ and $\ell_1^2$ merge, and hence a lower bound on the probability of at least one merger.
Thus
\begin{align*}
&\bbP^{ \bfX }( | \bar{ G }_k^{ N, n } | < | \xi | - 1 | \bar{ G }_{ k - 1 }^{ N, n } = \xi, G_j^{ N, n } = ( \xi, \ell ) ) \\
&\leq \frac{ \gamma^6 ( | \xi | - 2 ) }{ N - 2 } \Bigg[ \frac{ 1 }{ 3 } + \frac{ N \gamma^2 ( | \xi | - 3 ) }{ 4 ( N - 3 ) } \Bigg] \binom{ | \xi | }{ 2 } \bbP^{ \bfX }( | \bar{ G }_k^{ N, n } | < | \xi | | \bar{ G }_{ k - 1 }^{ N, n } = \xi, G_j^{ N, n } = ( \xi, \ell_0 ) ),
\end{align*}
and hence \eqref{eq:large_mergers_vanish} holds.

To show that \eqref{eq:binary_uniformity} holds, we begin by considering a transition between $G_{ k - 1 }^{ N, n } = ( \xi, \ell_0 )$ and $G_k^{ N, n } = ( \xi, \ell_1 )$ for an $\ell_1$ which satisfies $\min_{i, m \in [|\xi|]_d^2} | \ell_1^i - \ell_1^m| \geq \lceil \gamma \rceil$.
It is possible to construct such an $\ell_1$ because it takes values in $[N]_d^{|\xi|}$, sufficiently well-separated entries can be chosen from $[| \xi | ( \lceil \gamma \rceil + 1 )]_d^{|\xi|}$, and $[| \xi | ( \lceil \gamma \rceil + 1 )]_d^{|\xi|} \subset [N]_d^{|\xi|}$ for any sufficiently large $N$.
By \eqref{eq:m_mixing} and \eqref{eq:stratified_denominator}, the corresponding conditional transition probability can be bounded below:
\begin{align}
&\bbP( G_k^{ N, n } = ( \xi, \ell_1 ) | G_{ k - 1 }^{ N, n } = ( \xi, \ell_0 )  ) \notag \\
&= \frac{ 1 }{ ( N )_{ | \xi | } } \sum_{ ( h_1, \ldots, h_{ | \xi | } ) \in [ N ]_d^{ | \xi | } } \prod_{ i = 1 }^{ | \xi | } \frac{ | [ \frac{ h_i - 1 }{ N }, \frac{ h_i }{ N } ) \cap [ \bar{ g }_k^{ ( \ell_1^i - 1 ) }, \bar{ g }_k^{ ( \ell_1^i ) } ) | M_k( X_k( \ell_1^i ), X_{ k - 1 }( \ell_0^i ) ) }{ \sum_{ r = 1 }^N| [ \frac{ h_i - 1 }{ N }, \frac{ h_i }{ N } ) \cap [ \bar{ g }_k^{ ( r - 1 ) }, \bar{ g }_k^{ ( r ) } ) | M_k( X_k( r ), X_{ k - 1 }( \ell_0^i ) ) } \notag \\
&\geq \frac{ N^{ | \xi | } }{ ( N )_{ | \xi | } \gamma^{ | \xi | } } \sum_{ ( h_1, \ldots, h_{ | \xi | } ) \in [ N ]_d^{ | \xi | } } \prod_{ i = 1 }^{ | \xi | } \Big| \Big[ \frac{ h_i - 1 }{ N }, \frac{ h_i }{ N } \Big) \cap [ \bar{ g }_k^{ ( \ell_1^i - 1 ) }, \bar{ g }_k^{ ( \ell_1^i ) } ) \Big|. \label{eq:mixing_lower_bound}
\end{align}
Our aim is to show that \eqref{eq:mixing_lower_bound} can be minorised by a uniform measure on the set $\{ \ell \in [ N ]_d^{ | \xi | } : \min_{i, m \in [ | \xi | ]_d^2} | \ell^i - \ell^m | > \lceil \gamma \rceil \}$.

Because the separation between entries of $\ell_1$ is at least $\lceil \gamma \rceil$ and $| [ \bar{ g }_k^{ ( r - 1 ) }, \bar{ g }_k^{ ( r ) } ) | \leq \gamma / N$ by \eqref{eq:uniform_weight_bound}, all $| \xi |$ intervals of the form $[ \bar{ g }_k^{ ( \ell_1^i - 1 ) }, \bar{ g }_k^{ ( \ell_1^i ) } )$ intersect with non-overlapping sets of intervals of the form $[ \frac{ h_i - 1 }{ N }, \frac{ h_i }{ N } )$.
There must also be at least one $( h_1, \ldots, h_{ | \xi | } ) \in [ N ]_d^{ | \xi | }$ such that, for every $i \in [ N ]$,
\begin{equation*}
\Big| \Big[ \frac{ h_i - 1 }{ N }, \frac{ h_i }{ N } \Big) \cap [ \bar{ g }_k^{ ( \ell_1^i - 1 ) }, \bar{ g }_k^{ ( \ell_1^i ) } ) \Big| \geq \frac{ 1 }{ 2 N \gamma },
\end{equation*}
because $| [ \bar{ g }_k^{ ( \ell_1^i - 1 ) }, \bar{ g }_k^{ ( \ell_1^i ) } ) | \geq 1 / ( \gamma N )$ and an interval of width $1 / ( \gamma N )$ cannot overlap more than two consecutive intervals of width $1 / N$.
Taking one such vector $( h_1, \ldots, h_{ | \xi | } )$ and bounding all other summands in \eqref{eq:mixing_lower_bound} from below by zero, we obtain
\begin{equation*}
\bbP( G_k^{ N, n } = ( \xi, \ell_1 ) | G_{ k - 1 }^{ N, n } = ( \xi, \ell_0 )  ) \geq \frac{ N^{ | \xi | } }{ ( N )_{ | \xi | } \gamma^{ | \xi | } } \frac{ 1 }{ ( 2 N \gamma )^{ | \xi | } } = \frac{ 1 }{ ( 2 \gamma^2 )^{ | \xi | } } \frac{ 1 }{ ( N )_{ | \xi | } }.
\end{equation*}
The number of elements of $\{ \ell \in [ N ]_d^{ | \xi | } : \min_{i, m \in [ | \xi | ]_d^2} | \ell^i - \ell^m | > \lceil \gamma \rceil \}$ is at least $N ( N - 2 \lceil \gamma \rceil - 1 ) \ldots ( N - ( | \xi | - 1 ) [ 2 \lceil \gamma \rceil + 1 ] )$, which is the number of elements obtained by letting each entry exclude itself, as well as $\lceil \gamma \rceil$ neighbours on each side, without regard for the fact that these exclusion zones can overlap.
Hence
\begin{align}
\bbP( G_k^{ N, n } = ( \xi, \ell_1 ) | G_{ k - 1 }^{ N, n } = ( \xi, \ell_0 )  ) &\geq \frac{ N ( N - 2 \lceil \gamma \rceil - 1 ) \ldots ( N - ( | \xi | - 1 ) [ 2 \lceil \gamma \rceil + 1 ] ) }{ ( 2 \gamma^2 )^{ | \xi | }  ( N )_{ | \xi | } | \{ \ell \in [ N ]_d^{ | \xi | } : | \ell^i - \ell^m | > \lceil \gamma \rceil \} | } \notag \\
&= \frac{ 1 + o( 1 ) }{ ( 2 \gamma^2 )^{ | \xi | } } \frac{ 1 }{ | \{ \ell \in [ N ]_d^{ | \xi | } : | \ell^i - \ell^m | > \lceil \gamma \rceil \} | }. \label{eq:stratified_minorisation}
\end{align}
In view of \eqref{eq:stratified_minorisation}, we can construct a transition from $G_{ k - 1 }^{ N, n } = ( \xi, \ell_0 )$ by first sampling an independent $Y \sim \text{Ber}( ( 1 - \varepsilon ) / ( 2 \gamma^2 )^{ | \xi | } )$, for some sufficiently small $\varepsilon > 0$, as soon as $N$ is sufficiently large.
If $Y = 1$, we set $G_k^{ N, n } = ( \xi, \ell' )$, where $\ell' \sim U( \{ \ell \in [ N ]_d^{ | \xi | } : | \ell^i - \ell^m | > \lceil \gamma \rceil \} )$.
Otherwise, $G_k^{ N, n }$ is sampled from a non-uniform remainder distribution, existence of which is guaranteed by \eqref{eq:stratified_minorisation}.
The number of generations until $Y = 1$ is Geo$( ( 1 - \varepsilon ) / ( 2 \gamma^2 )^{ | \xi | } )$-distributed, and hence by \eqref{eq:stratified_merger_ub},
\begin{align*}
&\bbP^{ \bfX }(Y = 1 \text{ at least once between generations } j \text { and } \tau_N( \xi, \ell, j; t ) ) \\
&= 1 - \Bigg(1 - \frac{ 1 - \varepsilon }{ ( 2 \gamma^2 )^{ | \xi | } } \Bigg)^{ \lfloor (1 + o(1)) N t / \gamma^4 \rfloor - j } \to 1.
\end{align*}
almost surely as $N \to \infty$.
Hence \eqref{eq:binary_uniformity} holds by essentially the same argument used in the case of multinomial resampling; see \eqref{eq:label_mixing} and the paragraph immediately following it.

To verify \eqref{timescale_assumption} we consider the probability that the two lexicographically lowest blocks merge, which is a lower bound on the overall merger probability.
Using \eqref{eq:m_mixing} and \eqref{eq:stratified_denominator},
\begin{align}
&\bbP^{ \bfX }( | \bar{ G }_k^{ N, n } | < | \xi | | \bar{ G }_{ k - 1 }^{ N, n } = \xi, G_j^{ N, n } = ( \xi, \ell_0 ) ) \notag \\
&\geq \sum_{ \ell_1 \in [ N ]_d^{ | \xi | } } \bbP^{ \bfX }( G_{ k - 1 }^{ N, n } = ( \xi, \ell_1 ) | \bar{ G }_{ k - 1 }^{ N, n } = \xi, G_j^{ N, n } = ( \xi, \ell_0 ) ) \sum_{ ( h_1, h_2 ) \in [ N ]_d^2 }  \frac{ 1 }{ ( N )_2 } \notag \\
&\phantom{= \sum_{ \ell_1 \in [ N ]_d^{ | \xi | } } } \times \sum_{ m = 1 }^N \prod_{ i \in \{ 1, 2 \} } \frac{ | [ \frac{ h_i - 1 }{ N }, \frac{ h_i }{ N } ) \cap [ \bar{ g }_k^{ ( m - 1 ) }, \bar{ g }_k^{ ( m ) } ) | M_k( X_k( m ), X_{ k - 1 }( \ell_1^i ) ) }{ \sum_{ r = 1 }^N | [ \frac{ h_i - 1 }{ N }, \frac{ h_i }{ N } ) \cap [ \bar{ g }_k^{ ( r - 1 ) }, \bar{ g }_k^{ ( r ) } ) | M_k( X_k( r ), X_{ k - 1 }( \ell_1^i ) ) } \notag \\
&\geq \sum_{ \ell_1 \in [ N ]_d^{ | \xi | } } \bbP^{ \bfX }( G_{ k - 1 }^{ N, n } = ( \xi, \ell_1 ) | \bar{ G }_{ k - 1 }^{ N, n } = \xi, G_j^{ N, n } = ( \xi, \ell_0 ) ) \sum_{ ( h_1, h_2 ) \in [ N ]_d^2 }  \frac{ N^2 }{ ( N )_2 \gamma^2 } \notag \\
&\phantom{= \sum_{ \ell_1 \in [ N ]_d^{ | \xi | } } } \times \sum_{ m = 1 }^N \prod_{ i \in \{ 1, 2 \} } \Big| \Big[ \frac{ h_i - 1 }{ N }, \frac{ h_i }{ N } \Big) \cap \Big[ \bar{ g }_k^{ ( m - 1 ) }, \bar{ g }_k^{ ( m ) } \Big) \Big|. \label{stratified_lb_start}
\end{align}
Many of the terms of the sum over $(h_1, h_2) \in [N]_d^2$ on the right-hand side can evaluate to zero.
However, if $\varepsilon_N \leq 1/N$, then by \eqref{eq:environment} and with probability converging to one as $N \to \infty$, the particle weights are such that one of the two possibilities depicted in Figure \ref{fig:max_weight} occurs for some $h \in [N]$ and $m \in [N]$.
\begin{figure}[!ht]
\centering
	\begin{tikzpicture}
		\draw (0, 2.4) node[above]{$h - 1$} -- (0, 2) -- (1.5, 2) -- (3, 2);
		\draw (5.5, 2) -- (10, 2) -- (10, 2.4) node[above]{$h + 1$};
		\draw(1.5, 2) -- (1.5, 2.4) node[above]{$h$};
		\draw(3, 2) -- (3, 2.4) node[above]{$h + 1$};
		\draw(5.5, 2) -- (5.5, 2.4) node[above]{$h - 2$};
		\draw(7, 2) -- (7, 2.4) node[above]{$h - 1$};
		\draw(8.5, 2) -- (8.5, 2.4) node[above]{$h$};
		\draw (0.4, 1) node[above]{$\bar{g}_k^{(m - 1)}$} -- (0.4, 0.6) -- (2.5, 0.6) -- (2.5, 1) node[above]{$\bar{g}_k^{(m)}$};
		\draw (6.5, 1) node[above]{$\bar{g}_k^{(m - 1)}$} -- (6.5, 0.6) -- (9.25, 0.6) -- (9.25, 1) node[above]{$\bar{g}_k^{(m)}$};
	\end{tikzpicture}
\caption{Two ways in which a normalised particle weight taking values between $[1/N, 2/N]$ can overlap stratification intervals.}
\label{fig:max_weight}
\end{figure}
\noindent
For a given length $g_k^{(m)} - g_k^{(m - 1)}$, the product of overlaps in the left panel of Figure \ref{fig:max_weight} is minimised by aligning $g_k^{(m)}$ with $(h + 1) / N$, or equivalently $g_k^{(m - 1)}$ with $(h - 1) / N$, yielding
\begin{align}
\Big| \Big[ \frac{ h - 1 }{ N }, \frac{ h }{ N } \Big) \cap \Big[ \bar{ g }_k^{ ( m - 1 ) }, \bar{ g }_k^{ ( m ) } \Big) \Big| &\Big| \Big[ \frac{ h }{ N }, \frac{ h + 1 }{ N } \Big) \cap \Big[ \bar{ g }_k^{ ( m - 1 ) }, \bar{ g }_k^{ ( m ) } \Big) \Big| \notag \\
&\geq \Big(g_k^{(m)} - g_k^{(m - 1)} - \frac{1}{N}\Big) \frac{1}{N} \geq \frac{\varepsilon_N}{N}. \label{max_weight_bound_1}
\end{align}
The right panel yields the same bound after summing up the two successive pairs $(h - 2, h - 1)$ and $(h - 1, h)$:
\begin{align}
&\Big| \Big[ \frac{ h - 2 }{ N }, \frac{ h - 1 }{ N } \Big) \cap \Big[ \bar{ g }_k^{ ( m - 1 ) }, \bar{ g }_k^{ ( m ) } \Big) \Big| \Big| \Big[ \frac{ h - 1 }{ N }, \frac{ h }{ N } \Big) \cap \Big[ \bar{ g }_k^{ ( m - 1 ) }, \bar{ g }_k^{ ( m ) } \Big) \Big| \notag \\
&\phantom{\geq}+ \Big| \Big[ \frac{ h - 1 }{ N }, \frac{ h }{ N } \Big) \cap \Big[ \bar{ g }_k^{ ( m - 1 ) }, \bar{ g }_k^{ ( m ) } \Big) \Big| \Big| \Big[ \frac{ h }{ N }, \frac{ h + 1 }{ N } \Big) \cap \Big[ \bar{ g }_k^{ ( m - 1 ) }, \bar{ g }_k^{ ( m ) } \Big) \Big| \notag \\
&= \Big(g_k^{(m)} - g_k^{(m - 1)} - \frac{1}{N}\Big) \frac{1}{N} \geq \frac{\varepsilon_N}{N}. \label{max_weight_bound_2}
\end{align}
Moreover, the inequalities on the right-hand sides of \eqref{max_weight_bound_1} and \eqref{max_weight_bound_2} continue to hold if $\varepsilon_N > 1/N$, albeit with more terms to sum on the left-hand sides.
Substituting both into \eqref{stratified_lb_start} yields
\begin{align}
&\bbP^{ \bfX }( | \bar{ G }_k^{ N, n } | < | \xi | | \bar{ G }_{ k - 1 }^{ N, n } = \xi, G_j^{ N, n } = ( \xi, \ell_0 ) ) \notag \\
&\geq \sum_{ \ell_1 \in [ N ]_d^{ | \xi | } } \bbP^{ \bfX }( G_{ k - 1 }^{ N, n } = ( \xi, \ell_1 ) | \bar{ G }_{ k - 1 }^{ N, n } = \xi, G_j^{ N, n } = ( \xi, \ell_0 ) ) \frac{ N^2 }{ ( N )_2 \gamma^2 } \frac{\varepsilon_N}{N} \notag \\
&= \frac{ 1 + o( 1 ) }{ \gamma^2 } \frac{\varepsilon_N}{N}, \label{stratified_lb}
\end{align}
which is bounded away from zero in $k$ for each fixed $N$.
Hence, for a fixed $t \in (0, \infty)$, we have $\bbP( \tau_N( \xi, \ell, j; t ) = \infty ) \to 0$ as $N \to \infty$ as required.

Finally, the bounds on the timescale $\tau_N(t)$ in \eqref{stratified_timescale_lb} and \eqref{stratified_timescale_ub} follow from \eqref{eq:stratified_merger_ub} and \eqref{stratified_lb}, as well as \eqref{binomial_sum}.
\end{proof}

\begin{proof}[Proof of Proposition \ref{prop:environment}]
For given weights $(g_k(X_k(1)), \ldots, g_k(X_k(N)))$, we define coefficients $(s_1, \ldots, s_N)$ via
\begin{equation*}
\frac{g_k(X_k(i))}{g_k(X_k(1)) + \ldots + g_k(X_k(N))} =: \frac{1}{N} + s_i.
\end{equation*}
Since ESS$(k) \leq N (1 - \eta_N)$, we have
\begin{equation*}
\frac{1}{(1 - \eta_N) N} \leq \sum_{i = 1}^N \Big(\frac{1}{N} + s_i \Big)^2 = \frac{1}{N} + \sum_{i = 1}^N s_i^2,
\end{equation*}
where the last step follows from $s_1 + \ldots + s_N = 0$.
Rearranging, we obtain
\begin{equation*}
\frac{\eta_N}{N (1 - \eta_N)} \leq \sum_{i = 1}^N s_i^2 \leq N \max_{i \in [N]}\{s_i^2\}.
\end{equation*}
If the maximising $s_i$ on the right-hand side is positive, for that $s_i$ we have
\begin{equation*}
s_i \geq \frac{1}{N} \sqrt{\frac{\eta_N}{1 - \eta_N}}.
\end{equation*}
If it is negative, the remaining $N - 1$ coefficients have to compensate for it and there must exist an $s_i$ satisfying
\begin{equation*}
s_i \geq \frac{1}{N(N - 1)} \sqrt{\frac{\eta_N}{1 - \eta_N}}.
\end{equation*}
\end{proof}

\section{Technical lemmas}\label{app:lemmas}

\begin{lem} \label{lem:multinomial}
Suppose $x_1,\dots,x_N \geq0$, $\alpha \in \bbN$, $N \in \bbN$, and $N \geq \alpha \geq 2$.
Then
\begin{equation} \label{eq:multinomial}
\Bigg(\sum_{i=1}^N x_i\Bigg)^\alpha \leq \sum_{ ( i_1,\dots,i_\alpha ) \in [ N ]_d^{ \alpha }  }^N \prod_{ m = 1 }^\alpha x_{ i_m } + \binom{\alpha}{2}\sum_{ m = 1 }^N x_m^2 \Bigg(\sum_{i=1}^N x_i\Bigg)^{\alpha-2}.
\end{equation}
\end{lem}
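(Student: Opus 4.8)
The plan is to read the claimed inequality as a bookkeeping statement comparing the full multinomial expansion with its restriction to distinct index-tuples. Using the identity quoted in the excerpt,
$$\left(\sum_{i=1}^N x_i\right)^\alpha = \sum_{(i_1,\dots,i_\alpha) \in [N]^\alpha} \prod_{m=1}^\alpha x_{i_m},$$
I would first isolate the contribution of tuples that are \emph{not} all distinct:
$$\left(\sum_{i=1}^N x_i\right)^\alpha - \sum_{(i_1,\dots,i_\alpha) \in [N]_d^\alpha} \prod_{m=1}^\alpha x_{i_m} = \sum_{(i_1,\dots,i_\alpha) \in [N]^\alpha \setminus [N]_d^\alpha} \prod_{m=1}^\alpha x_{i_m}.$$
It therefore suffices to bound this ``collision'' sum by $\binom{\alpha}{2}\sum_{m=1}^N x_m^2 \big(\sum_i x_i\big)^{\alpha-2}$.

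The key observation is that every tuple with a repeated entry exhibits a collision at some pair of positions: there exist $1 \leq p < q \leq \alpha$ with $i_p = i_q$. I would cover the set $[N]^\alpha \setminus [N]_d^\alpha$ by the union, over the $\binom{\alpha}{2}$ position pairs $\{p,q\}$, of the events $\{i_p = i_q\}$. Since every summand $\prod_m x_{i_m}$ is non-negative (this is where $x_i \geq 0$ enters), subadditivity of the sum over a union yields
$$\sum_{(i_1,\dots,i_\alpha) \in [N]^\alpha \setminus [N]_d^\alpha} \prod_{m=1}^\alpha x_{i_m} \leq \sum_{1 \leq p < q \leq \alpha}\; \sum_{\substack{(i_1,\dots,i_\alpha) \in [N]^\alpha \\ i_p = i_q}} \prod_{m=1}^\alpha x_{i_m}.$$

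For each fixed pair $\{p,q\}$ the inner sum factorises cleanly: setting the common value $i_p = i_q = m$ contributes $x_m^2$, while each of the remaining $\alpha-2$ positions ranges freely over $[N]$ and contributes a factor $\sum_i x_i$ (the empty-product convention handling the boundary case $\alpha = 2$). Thus the inner sum equals $\sum_{m=1}^N x_m^2 \big(\sum_i x_i\big)^{\alpha-2}$ exactly, independently of the chosen pair, and summing over the $\binom{\alpha}{2}$ pairs gives precisely the second term on the right-hand side of \eqref{eq:multinomial}. The argument needs nothing beyond the multinomial theorem and non-negativity; the only point requiring care is that the union-bound step over-counts tuples with several simultaneous collisions, but because all terms are non-negative this over-counting only loosens the inequality in the direction we want, so it is harmless.
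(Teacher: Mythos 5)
Your proof is correct, and it reaches the bound by a different decomposition than the paper's. The paper works with the multinomial theorem in exponent-vector form: it splits the sum over $\bfk$ with $\|\bfk\|=\alpha$ according to whether $\max_i k_i = 1$ or $\max_i k_i \ge 2$, over-counts the latter terms by summing over which \emph{value} $i\in[N]$ carries multiplicity at least $2$, and then extracts the factor $x_i^2$ via the substitution $\bfl^{(i)} = \bfk^{(i)} - 2\bfe_i$, obtaining the constant $\binom{\alpha}{2}$ from the bound $\alpha(\alpha-1)/[(\ell^{(i)}_i+2)(\ell^{(i)}_i+1)] \le \binom{\alpha}{2}$. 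You instead stay with ordered index tuples throughout and over-count by which \emph{pair of positions} $(p,q)$ collides; the factor $\binom{\alpha}{2}$ then appears directly as the number of position pairs, and each inner sum factorises exactly into $\sum_m x_m^2\,(\sum_i x_i)^{\alpha-2}$ with no multinomial coefficients or changes of variable needed. Both arguments over-count in the same harmless direction (yours over tuples with several simultaneous collisions, the paper's over exponent vectors with several entries at least $2$), and both correctly flag non-negativity as the ingredient that makes the over-counting safe. Your version is the more elementary of the two and, as a side benefit, makes it transparent that the hypothesis $N \ge \alpha$ is not actually needed (when $N < \alpha$ the distinct-tuple sum is empty and your covering argument still applies to every tuple).
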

\begin{proof}
By the multinomial theorem and a simple partition of the resulting sum into two subsets, we have (letting $||\cdot||$ denote the $L^1$ norm):
\begin{align*}
\Bigg(\sum_{i=1}^N x_i\Bigg)^\alpha &= \sum_{\bfk \in \bbN^N:\:\| \bfk\| = \alpha} \binom{\alpha}{\bfk} \prod_{m=1}^N x_m^{k_m}\\
&= \sum_{\substack{\bfk \in \bbN^N:\:\| \bfk\| = \alpha\\\max\{ k_1, \ldots, k_N \} = 1}} \binom{\alpha}{\bfk} \prod_{m=1}^N x_m^{k_m}
 + \sum_{\substack{\bfk \in \bbN^N:\:\| \bfk\| = \alpha\\\max\{ k_1, \ldots, k_N \} \geq 2}} \binom{\alpha}{\bfk} \prod_{m=1}^N x_m^{k_m}
\end{align*}
We then partition the right-hand side according to which index of $\bfk$ has $k_i \geq 2$, and write the resulting summand as $\bfk^{(i)}$ to emphasize the distinctive index:
\begin{align*}
\Bigg(\sum_{i=1}^N x_i\Bigg)^\alpha &= \sum_{ ( i_1,\dots,i_\alpha ) \in [ N ]_d^{ \alpha } } \prod_{m=1}^\alpha x_{i_m} + \sum_{\substack{\bfk \in \bbN^N:\:\| \bfk\| = \alpha\\\max\{ k_1, \ldots, k_N \} \geq 2}} \binom{\alpha}{\bfk} \prod_{m=1}^N x_m^{k_m}\\
&\leq \sum_{ ( i_1,\dots,i_\alpha ) \in [ N ]_d^{ \alpha } } \prod_{m=1}^\alpha x_{i_j} + \sum_{i=1}^N \sum_{\substack{\bfk^{(i)} \in \bbN^N:\:\| \bfk^{(i)}\| = \alpha\\ k^{(i)}_i \geq 2}} \binom{\alpha}{\bfk^{(i)}} \prod_{m=1}^N x_m^{k^{(i)}_m}.
\end{align*}
This is an inequality because the right-hand side overcounts vectors $\bfk$ with more than one entry larger than 2.
For example $\bfk = (2,1,0,2,1)$ will appear on the right-hand side as an entry in the sum over both $\bfk^{(1)}$ and $\bfk^{(4)}$.
We can extract the two guaranteed entries of $k_i^{(i)}$ out of the innermost sum by using the change of variable $\bfl^{(i)} := \bfk^{(i)} - 2\bfe_i$, with $\bfe_i$ the tuple whose elements are $0$ with the exception of the $i^{\textrm{th}}$ which is 1:
\begin{align*}
\Bigg(\sum_{i=1}^N x_i\Bigg)^\alpha &= \sum_{ ( i_1,\dots,i_\alpha ) \in [ N ]_d^{ \alpha } } \prod_{m=1}^\alpha x_{i_m} \\
&\phantom{=} + \sum_{i=1}^N \sum_{\bfl^{(i)} \in \bbN^N:\:\| \bfl^{(i)}\| = \alpha-2} \frac{\alpha(\alpha-1)}{(\ell^{(i)}_i + 2)(\ell^{(i)}_i+1)}\binom{\alpha-2}{\bfl^{(i)}} x_i^2 \prod_{m=1}^N x_m^{\ell^{(i)}_m}\\
&\leq \sum_{ ( i_1,\dots,i_\alpha ) \in [ N ]_d^{ \alpha } } \prod_{m=1}^\alpha x_{i_m} + \binom{\alpha}{2}\sum_{i=1}^N x_i^2 \sum_{\bfl^{(i)} \in \bbN^N:\:\| \bfl^{(i)}\| = \alpha-2} \binom{\alpha-2}{\bfl^{(i)}} \prod_{m=1}^N x_m^{\ell^{(i)}_m}\\
&= \sum_{ ( i_1,\dots,i_\alpha ) \in [ N ]_d^{ \alpha } } \prod_{j=1}^\alpha x_{i_j} + \binom{\alpha}{2}\sum_{i=1}^N x_i^2  \Bigg(\sum_{m=1}^N x_m\Bigg)^{\alpha-2}.
\end{align*}
\end{proof}

The following lemma is reminiscent of the bounds of \cite[pages 442--443]{mohle1998}.

\begin{lem}\label{moehle_lb_lemma}
Let $N \in \bbN$ and $\{ a(m, i) \}_{m, i \in [N]}$ be an array of non-negative coefficients with $\sum_{m = 1}^N a(m, i) = 1$ for each $i \in [ N ]$, i.e.\ it coincides with the elements of a left stochastic matrix.
Let $r \in \bbN$, $v \in [ N ]$, and $( \ell^1, \ldots, \ell^r ) \in [ N ]_d^r$ be fixed.
Then
\begin{equation}
\sum_{ \substack{ ( v_1, \ldots, v_r ) \in [ N ]_d^r \\ \text{every } v_i \neq v } } \prod_{ i \in [ r ] } a( v_i, \ell^i ) \geq 1 - \sum_{ i \in [ r ] } a( v, \ell^i ) - \sum_{ ( h, h' ) \in [ r ]_d^2 } \sum_{ v' \in [ N ] \setminus \{ v \} } a( v', \ell^h ) a( v', \ell^{ h' } ). \label{eq:lem2}
\end{equation}
\end{lem}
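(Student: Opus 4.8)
The plan is to interpret the product $\prod_{i=1}^r a(v_i, \ell^i)$ probabilistically and reduce the inequality to a union bound. Since each column of the array sums to one, the weights $a(\cdot, \ell^i)$ define a probability mass function on $[N]$ for every $i$; let $V_1, \ldots, V_r$ be independent random variables with $V_i \sim a(\cdot, \ell^i)$. Then the total mass satisfies $\prod_{i=1}^r \sum_{m=1}^N a(m, \ell^i) = 1$, and the left-hand side of \eqref{eq:lem2} is exactly $\bbP(V_1, \ldots, V_r \text{ are distinct and all differ from } v)$. (Equivalently, one may work directly with the expanded sum $1 = \sum_{(v_1,\dots,v_r)\in[N]^r}\prod_i a(v_i,\ell^i)$ and split the index set into good and bad tuples; the probabilistic phrasing merely packages this bookkeeping.)

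First I would pass to the complement. Writing $A := \{\exists\, i : V_i = v\}$ for the event that some coordinate hits $v$ and $C := \{\exists\, h \neq h' : V_h = V_{h'}\}$ for the event that some pair of coordinates collides, the left-hand side equals $1 - \bbP(A \cup C)$. The exact decomposition $\bbP(A \cup C) = \bbP(A) + \bbP(C \cap A^c)$ isolates precisely the two terms subtracted on the right-hand side, so it remains to bound each piece from above.

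Next I would apply a union bound over coordinates and over pairs. For the first piece, $\bbP(A) \leq \sum_{i=1}^r \bbP(V_i = v) = \sum_{i \in [r]} a(v, \ell^i)$. For the second, the crucial observation is that on $A^c$ no coordinate equals $v$, so any collision necessarily occurs at a value $v' \in [N] \setminus \{v\}$; hence $C \cap A^c \subseteq \bigcup_{\{h,h'\}} \{V_h = V_{h'} \neq v\}$, and by independence $\bbP(C \cap A^c) \leq \sum_{\{h,h'\}} \sum_{v' \neq v} a(v', \ell^h) a(v', \ell^{h'})$, the outer sum running over unordered pairs. Replacing the unordered pairs by the ordered pairs $(h,h') \in [r]_d^2$ merely doubles each (non-negative) term, so it enlarges the subtracted quantity and only weakens the inequality, yielding exactly the final sum in \eqref{eq:lem2}. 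Combining the two bounds and using $1 - \bbP(A\cup C) \geq 1 - \bbP(A) - \bbP(C\cap A^c)$ gives the claim.

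The argument carries no analytic content and is essentially routine; the only place requiring care is the bookkeeping in the collision term. Specifically, one must use the restriction to $A^c$ to justify summing $v'$ over $[N] \setminus \{v\}$ rather than over all of $[N]$, and one must note that passing from unordered index pairs to ordered pairs is harmless because it only increases the term being subtracted. The distinctness of the $\ell^i$ plays no role beyond labelling distinct columns, so the proof goes through verbatim.
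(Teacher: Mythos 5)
Your proof is correct and takes essentially the same route as the paper's: your decomposition $1 = \bbP(\text{good}) + \bbP(A) + \bbP(C \cap A^c)$ is exactly the paper's splitting of $1 = \sum_{(v_1,\dots,v_r)\in[N]^r}\prod_{i} a(v_i,\ell^i)$ into distinct tuples avoiding $v$, tuples with some $v_i = v$, and non-distinct tuples avoiding $v$, with your union bounds playing the role of the paper's overcounting arguments. Your closing remark is also accurate: the bound over unordered pairs is the sharper one, and summing over ordered pairs $(h,h') \in [r]_d^2$ as in \eqref{eq:lem2} only enlarges the subtracted term, so the stated inequality follows a fortiori.
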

\begin{proof}
Because $\sum_{ m = 1 }^N a( m, \ell ) = 1$, we have
\begin{align}
1 &= \prod_{i\in [r]}\left(\sum_{m \in [N]} a(m,\ell^i)\right) = \sum_{ (v_1,\dots,v_r) \in [ N ]^r} \prod_{i\in [r]} a(v_i,\ell^i) \notag\\
&= \sum_{ \substack{ ( v_1, \ldots, v_r ) \in [ N ]_d^r \\ \text{every } v_i \neq v } } \prod_{i\in [r]} a(v_i,\ell^i) + \sum_{ \substack{ ( v_1, \ldots, v_r ) \in [ N ]^r\setminus [N]^r_d \\ \text{every } v_i \neq v }} \prod_{i\in [r]} a(v_i,\ell^i)   + \sum_{ \substack{ ( v_1, \ldots, v_r ) \in [ N ]^r \\ \exists i:\: v_i = v }} \prod_{i\in [r]} a(v_i,\ell^i). \label{eq:lem2-step1}
\end{align}
Now
\begin{align*}
\sum_{ \substack{ ( v_1, \ldots, v_r ) \in [ N ]^r \\ \exists i:\: v_i = v }}  \prod_{i\in [r]} a(v_i,\ell^i) &\leq \sum_{h\in[r]} a(v,\ell^h)\sum_{ (v_1,\dots,v_{h-1},v_{h+1},\dots,v_{r}) \in [N]^{r-1}}  \prod_{i\in [r]\setminus \{h\}} a(v_i,\ell^i) \\
&=  \sum_{h\in[r]} a(v,\ell^h) \prod_{i\in[r]\setminus\{h\}} \left(\sum_{m\in[N]} a(m,\ell^i) \right) \\
&= \sum_{h\in[r]} a(v,\ell^h), 
\end{align*}
where the inequality follows by overcounting summands for which more than one element of $(v_1,\dots,v_r)$ is equal to $v$, and the last equality again uses that $\sum_{ m = 1 }^N a( m, \ell ) = 1$.

Similarly, overcounting summands for which more than two elements of $(v_1,\dots,v_r)$ are equal to another entry, we find
\begin{align}
\lefteqn{\sum_{ \substack{ ( v_1, \ldots, v_r ) \in [ N ]^r\setminus [N]^r_d \\ \text{every } v_i \neq v }} \prod_{i\in [r]} a(v_i,\ell^i)} \notag\\
&\leq \sum_{(h,h')\in[r]_d^2} \sum_{v'\in[N]\setminus\{v\}} a(v',\ell^h)a(v',\ell^{h'}) \sum_{( v_i : i \in [r] \setminus \{h, h' \}) \in ([N]\setminus\{v\})^{r-2}} \prod_{k\in[r]\setminus\{h,h'\}} a(v_i,\ell^k) \notag\\
&\leq \sum_{(h,h')\in[r]_d^2} \sum_{v'\in[N]\setminus\{v\}} a(v',\ell^h)a(v',\ell^{h'}) \sum_{( v_i : i \in [r] \setminus \{h,h' \}) \in [N]^{r-2}} \prod_{k\in[r]\setminus\{h,h'\}} a(v_i,\ell^k) \notag\\
&= \sum_{(h,h')\in[r]_d^2} \sum_{v'\in[N]\setminus\{v\}} a(v',\ell^h)a(v',\ell^{h'}) \prod_{k\in[r]\setminus\{h,h'\}} \left(\sum_{m\in [N]} a(m,\ell^k)\right) \notag\\
&= \sum_{(h,h')\in[r]_d^2} \sum_{v'\in[N]\setminus\{v\}} a(v',\ell^h)a(v',\ell^{h'}), \label{eq:lem2-step3}
\end{align}
where again we use $\sum_{ m = 1 }^N a( m, \ell ) = 1$ in the last step.
Combining \eqref{eq:lem2-step1}--\eqref{eq:lem2-step3} yields \eqref{eq:lem2}.
\end{proof}

\section*{Open access and data sharing}

For the purpose of open access, the authors have applied a Creative Commons Attribution (CC BY) license to any Author Accepted Manuscript version arising from this submission.
Data sharing is not applicable to this article as no new data were generated or analyzed.

\section*{Acknowledgements}

We thank Janique Krasnowska for help with proof-reading earlier versions of this manuscript, as well as an anonymous referee for many constructive questions and suggestions.
JK acknowledges the support of the Engineering and Physical Sciences Council of the United Kingdom (EPSRC; Grant Number EP/V049208/1).
PJ acknowledges the support of the Engineering and Physical Sciences Council of the United Kingdom (EPSRC; Grant Number EP/Y028783/1).
AMJ acknowledges the support of the Engineering and Physical Sciences Council of the United Kingdom (EPSRC; Grant Numbers EP/R034710/1 and EP/Y014650/1).

\bibliographystyle{alpha}\bibliography{smc}  

\end{document}